\documentclass[11pt]{amsart}

\textheight 220mm
\textwidth 150mm
\hoffset -16mm
\usepackage{mathrsfs}
\usepackage{amsmath,amssymb,amsthm}
\usepackage{graphicx}
\usepackage{latexsym}
\usepackage{mathrsfs}
\usepackage{tikz}
\usetikzlibrary{cd,arrows,matrix,backgrounds,positioning,calc,decorations.markings,decorations.pathmorphing,decorations.pathreplacing}
\tikzset{black/.style={circle,fill=black,inner sep=3pt,outer sep=3pt},
         white/.style={circle,fill=white,draw=black,inner sep=3pt,outer sep=3pt},
}
\input xy  

\xyoption{all}
\newtheorem{theorem}{Theorem}[section]

\newtheorem{thm}[theorem]{Theorem}
\newtheorem{cor}[theorem]{Corollary}
\newtheorem{lemm}[theorem]{Lemma}
\newtheorem{prop}[theorem]{Proposition}
\newtheorem{question}[theorem]{Question}

\theoremstyle{definition}
\newtheorem{definition-theorem}[theorem]{Definition-Theorem}
\newtheorem{defi}[theorem]{Definition}
\newtheorem{remk}[theorem]{Remark}
\newtheorem{exam}[theorem]{Example}

\newcommand{\Db}{\mathsf{D}^{\rm b}}

\newcommand{\T}{\operatorname{\mathsf{T}}\nolimits}
\newcommand{\F}{\operatorname{\mathsf{F}}\nolimits}
\newcommand{\FF}{\operatorname{\mathcal F}\nolimits}
\newcommand{\PP}{\operatorname{\mathcal P}\nolimits}

\newcommand{\AD}{\mathsf{AD}}
\newcommand{\NAD}{\mathsf{NAD}}
\newcommand{\DAD}{\mathsf{DAD}}
\newcommand{\RNAD}{\mathsf{RNAD}}
\newcommand{\ANAD}{\mathsf{ANAD}}
\newcommand{\G}{\mathcal{G}}
\newcommand{\R}{\mathcal{R}}
\newcommand{\D}{\mathcal{D}}
\newcommand{\E}{\mathcal{E}}
\newcommand{\sS}{\mathcal{S}}

\newcommand{\xX}{\mathcal{X}}

\DeclareMathOperator{\smc}{\mathsf{smc}}
\DeclareMathOperator{\twosmc}{\mathsf{2-smc}}

\DeclareMathOperator{\brick}{\mathsf{brick}}
\DeclareMathOperator{\sbrick}{\mathsf{sbrick}}

\DeclareMathOperator{\fLsbrick}{\mathsf{f_L-sbrick}}
\DeclareMathOperator{\fRsbrick}{\mathsf{f_R-sbrick}}

\DeclareMathOperator{\ftors}{\mathsf{f-tors}}

\DeclareMathOperator{\ftorf}{\mathsf{f-torf}}
\newcommand{\Filt}{\mathsf{Filt}}

\newcommand{\add}{\mathsf{add}\nolimits}

\newcommand{\id}{\operatorname{id}\nolimits}

\newcommand{\Hom}{\operatorname{Hom}\nolimits}

\newcommand{\Ext}{\operatorname{Ext}\nolimits}

\newcommand{\rad}{\operatorname{rad}\nolimits}
\newcommand{\Soc}{\operatorname{Soc}\nolimits}

\newcommand{\RHom}{\mathbf{R}\strut\kern-.2em\operatorname{Hom}\nolimits}

\def\dim{\mathop{\mathrm{dim}}\nolimits}

\def\Ker{\mathop{\mathrm{Ker}}\nolimits}
\def\Coker{\mathop{\mathrm{Coker}}\nolimits}
\def\Hom{\mathop{\mathrm{Hom}}\nolimits}
\def\End{\mathop{\mathrm{End}}\nolimits}
\def\Ext{\mathop{\mathrm{Ext}}\nolimits}

\def\RHom{\mathop{\mathbb R\mathrm{Hom}}\nolimits}
\DeclareMathOperator{\moduleCategory}{\mathsf{mod}} \renewcommand{\mod}{\moduleCategory}

\newcommand{\Fac}{\mathsf{Fac}\hspace{.01in}}
\def\add{{\mathsf{add}}}

\begin{document}
\title
[Arc diagrams and 2-s.m.c for preprojective algebras]{Arc diagrams and 2-term simple-minded collections of  preprojective algebras of type $A$}

\author{Yuya Mizuno}
\address{Faculty of Liberal Arts and Sciences, 	Osaka Prefecture University, 1-1 Gakuen-cho, Naka-ku, Sakai, Osaka 599-8531, Japan}
\email{yuya.mizuno@las.osakafu-u.ac.jp}
\thanks{The author is supported by 
Grant-in-Aid for Scientific Research 20K03539.}
\thanks{\emph{Keywords}. preprojective algebras, simple-minded collections, semibricks, symmetric group, arc diagrams, mutation.}
\begin{abstract}
We study an explicit description of semibricks and 2-term simple-minded collections over preprojective algebras of type $A$ via arc diagrams. 
We provide a bijection between the set of noncrossing arc diagrams (resp. the set of double arc diagrams), which is in bijective correspondence with elements of the symmetric group, and the set of semibricks (resp. the set of 2-term simple-minded collections) over the algebra. 
Moreover we define a mutation and a partial order on the set of double arc diagrams. 
In particular, we obtain a poset isomorphism between the symmetric group and the set of 2-term simple-minded collections. 
As an application of our results, we study semibricks of some quotient algebras of the preprojective algebras of type $A$ and  
we reprove some important results shown by the other authors. 

\end{abstract}
\maketitle
\tableofcontents

\section{Introduction}

\subsection{Background}
Preprojective algebras are one of the important classes of algebras not only in representation theory of algebras but also in many areas of mathematics. 
One of the remarkable properties of this class is that they can unify path algebras of all orientations of the given quiver. 
It has recently turned out this property provides a close connection between 
tilting theory of preprojective algebras and the corresponding Weyl groups \cite{BIRS,IR,M1}. 
This strong link leads to a lot of fruitful consequences to analyze categorical structures using combinatorics of Weyl groups, for example \cite{AM,AIRT,A2,BIRS,GLS,IRRT,IRTT,IZ,M1,M2,MT}. 
In this recent development, a theory of semibricks and $\tau$-tilting modules have been playing an crucial role \cite{AIR,A1}. 
This theory allows us to give a systematic way to study several key categories such as torsion classes and wide subcategories. 

By the result of \cite{M1,A1}, we have established a one-to-one correspondence between semibricks 
over preprojective algebras of Dynkin type and the corresponding Weyl group through $\tau$-tilting theory. 
One of the main aims of this paper is to give an explicit description of semibricks for type $A$ via 
arc diagrams, and provide a direct and simple proof 
of the bijection.

\subsection{Main results}
Fix a positive integer $n$. 
Let $\Pi$ be the preprojective algebra of type $A_n$, 
$\mod \Pi$ the category of finitely generated right $\Pi$-modules and  $\Db(\mod \Pi)$ the bounded derived category of $\mod\Pi$.
We denote by $\sbrick\Pi$ the set of semibricks (Definition \ref{semibricks}) of $\mod\Pi$ and by $\twosmc\Pi$ the set of the 2-term simple-minded collections (SMCs for short) on $\Db(\mod \Pi)$ (Definition \ref{def sms}). 

Our fundamental tool is the notion of arc diagrams introduced by Reading \cite{R3}. 
The set of noncrossing arc diagrams of $n+1$ points (Definition \ref{def nad}, $\NAD$ for short), which is bijective correspondence with elements of the symmetric group of degree $n+1$, provides a combinatorial model for canonical join representations \cite{R3}. 
We give a map from each arc to a module, which is a brick, 
and we will show that this map can be extended to a bijective map from $\NAD$ to $\sbrick\Pi$ (Theorem \ref{bij nad}). 
Moreover, we attach two gradings on arcs, 
which we call \emph{green} and \emph{red}, 
and introduce the notion of the set of double arc diagrams (Definition \ref{def dad}, $\DAD$ for short), which is a set of  bigraded arc diagrams enhanced from  $\NAD$. 
Then we give an interpretation of the gradings as shift functors of the derived category of $\Db(\mod \Pi)$ and 
extend the above map from $\DAD$ to $\twosmc\Pi$, which turned out to be bijection. 
For this purpose, we define a mutation on $\DAD$ (Definition \ref{muta}). The mutation is compatible with the action of a simple generator on the Weyl group (Proposition \ref{comm}), and this fact induces a partial order on $\DAD$ (Corollary \ref{dad partial order}). 

The following picture shows the mutation behavior of $\NAD$ of 4 points. 
\begin{align*}
\begin{xy}
(  0,-36) *+{\begin{tikzpicture}
      [ mycell/.style={draw, minimum size=1em},
        dot/.style={mycell,
            append after command={\pgfextra \fill (\tikzlastnode) circle[radius=.2em]; \endpgfextra}}]
\begin{scope}[every node/.style={circle, fill=black, inner sep=.5mm, outer sep=0}]
\node (1) {};
\node[right=3mm of 1] (2) {};
\node[right=3mm of 2] (3) {};
\node[right=3mm of 3] (4) {};
      \end{scope}
      \begin{scope}
        [thick, rounded corners=8pt]
         \draw[red,dotted] (1)--($(2)$)--($(3)$)-- (4);
      \end{scope}
      \end{tikzpicture}}="1234",
( 24,-24) *+{\begin{tikzpicture}
      [ mycell/.style={draw, minimum size=1em},
        dot/.style={mycell,
            append after command={\pgfextra \fill (\tikzlastnode) circle[radius=.2em]; \endpgfextra}}]
\begin{scope}[every node/.style={circle, fill=black, inner sep=.5mm, outer sep=0}]
\node (1) {};
\node[right=3mm of 1] (2) {};
\node[right=3mm of 2] (3) {};
\node[right=3mm of 3] (4) {};
      \end{scope}
      \begin{scope}
        [thick, rounded corners=8pt]
        \draw[green]
        (1)--(2); 
         \draw[red,dotted] (4)-- (3);
          \draw[red,dotted] (1)--($(2) - (0,3mm)$)-- (3);
      \end{scope}
      \end{tikzpicture}}="2134",
(  0,-24) *+{\begin{tikzpicture}
      [ mycell/.style={draw, minimum size=1em},
        dot/.style={mycell,
            append after command={\pgfextra \fill (\tikzlastnode) circle[radius=.2em]; \endpgfextra}}]
\begin{scope}[every node/.style={circle, fill=black, inner sep=.5mm, outer sep=0}]
\node (1) {};
\node[right=3mm of 1] (2) {};
\node[right=3mm of 2] (3) {};
\node[right=3mm of 3] (4) {};
      \end{scope}
      \begin{scope}
        [thick, rounded corners=8pt]
        \draw[green]
        (2)-- (3); 
        \draw[red,dotted] (1)--($(2) + (0,3mm)$)-- (3);
        \draw[red,dotted] (2)-- ($(3) - (0,3mm)$)-- (4);
      \end{scope}
      \end{tikzpicture}}="1324",
(-24,-24) *+{\begin{tikzpicture}
      [ mycell/.style={draw, minimum size=1em},
        dot/.style={mycell,
            append after command={\pgfextra \fill (\tikzlastnode) circle[radius=.2em]; \endpgfextra}}]
\begin{scope}[every node/.style={circle, fill=black, inner sep=.5mm, outer sep=0}]
\node (1) {};
\node[right=3mm of 1] (2) {};
\node[right=3mm of 2] (3) {};
\node[right=3mm of 3] (4) {};
      \end{scope}
      \begin{scope}
        [thick, rounded corners=8pt]
        \draw[green]
        (3)-- (4); 
        \draw[red,dotted] (1)-- ($(2)$)--($(3) + (0,3mm)$)-- (4);
      \end{scope}
      \end{tikzpicture}}="1243",
( 48,-12) *+{\begin{tikzpicture}
      [ mycell/.style={draw, minimum size=1em},
        dot/.style={mycell,
            append after command={\pgfextra \fill (\tikzlastnode) circle[radius=.2em]; \endpgfextra}}]
\begin{scope}[every node/.style={circle, fill=black, inner sep=.5mm, outer sep=0}]
\node (1) {};
\node[right=3mm of 1] (2) {};
\node[right=3mm of 2] (3) {};
\node[right=3mm of 3] (4) {};
      \end{scope}
      \begin{scope}
        [thick, rounded corners=8pt]
        \draw[green]
        (1)-- ($(2) - (0,2mm)$)-- (3); 
                  \draw[red,dotted] (1)-- ($(2) - (0,3mm)$)--($(3) - (0,3mm)$)-- (4);
  \draw[red,dotted] (2)-- (3);
      \end{scope}
      \end{tikzpicture}}="2314",
( 24,-12) *+{\begin{tikzpicture}
      [ mycell/.style={draw, minimum size=1em},
        dot/.style={mycell,
            append after command={\pgfextra \fill (\tikzlastnode) circle[radius=.2em]; \endpgfextra}}]
\begin{scope}[every node/.style={circle, fill=black, inner sep=.5mm, outer sep=0}]
\node (1) {};
\node[right=3mm of 1] (2) {};
\node[right=3mm of 2] (3) {};
\node[right=3mm of 3] (4) {};
      \end{scope}
      \begin{scope}
        [thick, rounded corners=8pt]
        \draw[green]
        (1)-- ($(2) + (0,3mm)$)-- (3); 
          \draw[red,dotted] (1)-- ($(2)$)--($(3) - (0,2mm)$)-- (4);
      \end{scope}
      \end{tikzpicture}}="3124",
(  0,-12) *+{\begin{tikzpicture}
      [ mycell/.style={draw, minimum size=1em},
        dot/.style={mycell,
            append after command={\pgfextra \fill (\tikzlastnode) circle[radius=.2em]; \endpgfextra}}]
\begin{scope}[every node/.style={circle, fill=black, inner sep=.5mm, outer sep=0}]
\node (1) {};
\node[right=3mm of 1] (2) {};
\node[right=3mm of 2] (3) {};
\node[right=3mm of 3] (4) {};
      \end{scope}
      \begin{scope}
        [thick, rounded corners=8pt]
        \draw[green]
        (1)-- (2); 
         \draw[green] (3)-- (4); 
          \draw[red,dotted] (1)-- ($(2) - (0,3mm)$)--($(3) + (0,3mm)$)-- (4);
      \end{scope}
      \end{tikzpicture}}="2143",
(-24,-12) *+{\begin{tikzpicture}
      [ mycell/.style={draw, minimum size=1em},
        dot/.style={mycell,
            append after command={\pgfextra \fill (\tikzlastnode) circle[radius=.2em]; \endpgfextra}}]
\begin{scope}[every node/.style={circle, fill=black, inner sep=.5mm, outer sep=0}]
\node (1) {};
\node[right=3mm of 1] (2) {};
\node[right=3mm of 2] (3) {};
\node[right=3mm of 3] (4) {};
      \end{scope}
      \begin{scope}
        [thick, rounded corners=8pt]
        \draw[green]        (2)--($(3) - (0,3mm)$)-- (4); 
      \draw[red,dotted] (3)-- (4); 
        \draw[red,dotted] (1)-- ($(2) + (0,2mm)$)-- (3);
      \end{scope}
      \end{tikzpicture}}="1342",
(-48,-12) *+{\begin{tikzpicture}
      [ mycell/.style={draw, minimum size=1em},
        dot/.style={mycell,
            append after command={\pgfextra \fill (\tikzlastnode) circle[radius=.2em]; \endpgfextra}}]
\begin{scope}[every node/.style={circle, fill=black, inner sep=.5mm, outer sep=0}]
\node (1) {};
\node[right=3mm of 1] (2) {};
\node[right=3mm of 2] (3) {};
\node[right=3mm of 3] (4) {};
      \end{scope}
      \begin{scope}
        [thick, rounded corners=8pt]
        \draw[green]        (2)-- ($(3) + (0,2mm)$)-- (4); 
        \draw[red,dotted] (2)-- (3); 
        \draw[red,dotted] (1)-- ($(2) + (0,3mm)$)--($(3) + (0,3mm)$)-- (4);
      \end{scope}
      \end{tikzpicture}}="1423",
( 60,  0) *+{\begin{tikzpicture}
      [ mycell/.style={draw, minimum size=1em},
        dot/.style={mycell,
            append after command={\pgfextra \fill (\tikzlastnode) circle[radius=.2em]; \endpgfextra}}]
\begin{scope}[every node/.style={circle, fill=black, inner sep=.5mm, outer sep=0}]
\node (1) {};
\node[right=3mm of 1] (2) {};
\node[right=3mm of 2] (3) {};
\node[right=3mm of 3] (4) {};
      \end{scope}
      \begin{scope}
        [thick, rounded corners=8pt]
        \draw[green]
        (1)-- ($(2) - (0,3mm)$)--($(3) - (0,3mm)$)-- (4); 
        \draw[red,dotted] (2)-- (3)-- (4); 
      \end{scope}
      \end{tikzpicture}}="2341",
( 36,  0) *+{\begin{tikzpicture}
      [ mycell/.style={draw, minimum size=1em},
        dot/.style={mycell,
            append after command={\pgfextra \fill (\tikzlastnode) circle[radius=.2em]; \endpgfextra}}]
\begin{scope}[every node/.style={circle, fill=black, inner sep=.5mm, outer sep=0}]
\node (1) {};
\node[right=3mm of 1] (2) {};
\node[right=3mm of 2] (3) {};
\node[right=3mm of 3] (4) {};
      \end{scope}
      \begin{scope}
        [thick, rounded corners=8pt]
        \draw[green]
        (1)-- (2)-- (3); 
        \draw[red,dotted] (1)-- ($(2) - (0,3mm)$)-- ($(3) - (0,3mm)$)-- (4); 
      \end{scope}
      \end{tikzpicture}}="3214",
( 12,  0) *+{\begin{tikzpicture}
      [ mycell/.style={draw, minimum size=1em},
        dot/.style={mycell,
            append after command={\pgfextra \fill (\tikzlastnode) circle[radius=.2em]; \endpgfextra}}]
\begin{scope}[every node/.style={circle, fill=black, inner sep=.5mm, outer sep=0}]
\node (1) {};
\node[right=3mm of 1] (2) {};
\node[right=3mm of 2] (3) {};
\node[right=3mm of 3] (4) {};
      \end{scope}
      \begin{scope}
        [thick, rounded corners=8pt]
        \draw[green]
        (1)-- ($(2) + (0,3mm)$)-- (3); 
          \draw[green] (2)-- ($(3) - (0,3mm)$)-- (4); 
           \draw[red,dotted](1)-- ($(2) + (0,2mm)$)-- ($(3) - (0,2mm)$)-- (4); 
      \end{scope}
      \end{tikzpicture}}="3142",
(-12,  0) *+{\begin{tikzpicture}
      [ mycell/.style={draw, minimum size=1em},
        dot/.style={mycell,
            append after command={\pgfextra \fill (\tikzlastnode) circle[radius=.2em]; \endpgfextra}}]
\begin{scope}[every node/.style={circle, fill=black, inner sep=.5mm, outer sep=0}]
\node (1) {};
\node[right=3mm of 1] (2) {};
\node[right=3mm of 2] (3) {};
\node[right=3mm of 3] (4) {};
      \end{scope}
      \begin{scope}
        [thick, rounded corners=8pt]
         \draw[green] (1)-- ($(2) - (0,2mm)$)-- ($(3) + (0,2mm)$)-- (4); 
          \draw[red,dotted](2)-- ($(3) + (0,4mm)$)-- (4);
           \draw[red,dotted](1)-- ($(2) - (0,4mm)$)-- (3);
      \end{scope}
      \end{tikzpicture}}="2413",
(-36,  0) *+{\begin{tikzpicture}
      [ mycell/.style={draw, minimum size=1em},
        dot/.style={mycell,
            append after command={\pgfextra \fill (\tikzlastnode) circle[radius=.2em]; \endpgfextra}}]
\begin{scope}[every node/.style={circle, fill=black, inner sep=.5mm, outer sep=0}]
\node (1) {};
\node[right=3mm of 1] (2) {};
\node[right=3mm of 2] (3) {};
\node[right=3mm of 3] (4) {};
      \end{scope}
      \begin{scope}
        [thick, rounded corners=8pt]
        \draw[green]
        (2)-- (3)-- (4); 
         \draw[red,dotted](1)-- ($(2) + (0,3mm)$)-- ($(3) + (0,3mm)$)-- (4);
      \end{scope}
      \end{tikzpicture}}="1432",
(-60,  0) *+{\begin{tikzpicture}
      [ mycell/.style={draw, minimum size=1em},
        dot/.style={mycell,
            append after command={\pgfextra \fill (\tikzlastnode) circle[radius=.2em]; \endpgfextra}}]
\begin{scope}[every node/.style={circle, fill=black, inner sep=.5mm, outer sep=0}]
\node (1) {};
\node[right=3mm of 1] (2) {};
\node[right=3mm of 2] (3) {};
\node[right=3mm of 3] (4) {};
      \end{scope}
      \begin{scope}
        [thick, rounded corners=8pt]
        \draw[green] (1)-- ($(2) + (0,3mm)$)-- ($(3) + (0,3mm)$)-- (4) ; 
         \draw[red,dotted] (1)-- (2);
          \draw[red,dotted] (2)-- (3);
      \end{scope}
      \end{tikzpicture}}="4123",
( 48, 12) *+{\begin{tikzpicture}
      [ mycell/.style={draw, minimum size=1em},
        dot/.style={mycell,
            append after command={\pgfextra \fill (\tikzlastnode) circle[radius=.2em]; \endpgfextra}}]
\begin{scope}[every node/.style={circle, fill=black, inner sep=.5mm, outer sep=0}]
\node (1) {};
\node[right=3mm of 1] (2) {};
\node[right=3mm of 2] (3) {};
\node[right=3mm of 3] (4) {};
      \end{scope}
      \begin{scope}
        [thick, rounded corners=8pt]
        \draw[green]        (2)-- (3); 
        \draw[green] (1)-- ($(2) - (0,3mm)$)-- ($(3) - (0,3mm)$)-- (4);
        \draw[red,dotted] (2)-- ($(3) - (0,2mm)$)-- (4);
      \end{scope}
      \end{tikzpicture}}="3241",
( 24, 12) *+{\begin{tikzpicture}
      [ mycell/.style={draw, minimum size=1em},
        dot/.style={mycell,
            append after command={\pgfextra \fill (\tikzlastnode) circle[radius=.2em]; \endpgfextra}}]
\begin{scope}[every node/.style={circle, fill=black, inner sep=.5mm, outer sep=0}]
\node (1) {};
\node[right=3mm of 1] (2) {};
\node[right=3mm of 2] (3) {};
\node[right=3mm of 3] (4) {};
      \end{scope}
      \begin{scope}
        [thick, rounded corners=8pt]
        \draw[green]   (3)-- (4);
     \draw[green] (1)-- ($(2) - (0,3mm)$)--  (3); 
     \draw[red,dotted] (2)-- ($(3) + (0,2mm)$)-- (4);
      \end{scope}
      \end{tikzpicture}}="2431",
(  0, 12) *+{\begin{tikzpicture}
      [ mycell/.style={draw, minimum size=1em},
        dot/.style={mycell,
            append after command={\pgfextra \fill (\tikzlastnode) circle[radius=.2em]; \endpgfextra}}]
\begin{scope}[every node/.style={circle, fill=black, inner sep=.5mm, outer sep=0}]
\node (1) {};
\node[right=3mm of 1] (2) {};
\node[right=3mm of 2] (3) {};
\node[right=3mm of 3] (4) {};
      \end{scope}
      \begin{scope}
        [thick, rounded corners=8pt]
         \draw[green] (1)-- ($(2) + (0,3mm)$)-- ($(3) - (0,3mm)$)-- (4);
         \draw[red,dotted] (1)-- (2);
         \draw[red,dotted] (3)-- (4);
      \end{scope}
      \end{tikzpicture}}="3412",
(-24, 12) *+{\begin{tikzpicture}
      [ mycell/.style={draw, minimum size=1em},
        dot/.style={mycell,
            append after command={\pgfextra \fill (\tikzlastnode) circle[radius=.2em]; \endpgfextra}}]
\begin{scope}[every node/.style={circle, fill=black, inner sep=.5mm, outer sep=0}]
\node (1) {};
\node[right=3mm of 1] (2) {};
\node[right=3mm of 2] (3) {};
\node[right=3mm of 3] (4) {};
      \end{scope}
      \begin{scope}
        [thick, rounded corners=8pt]
        \draw[green]   (1)-- (2);
        \draw[green] (2)-- ($(3) + (0,3mm)$)-- (4);
        \draw[red,dotted] (1)-- ($(2) - (0,2mm)$)-- (3);
      \end{scope}
      \end{tikzpicture}}="4213",
(-48, 12) *+{\begin{tikzpicture}
      [ mycell/.style={draw, minimum size=1em},
        dot/.style={mycell,
            append after command={\pgfextra \fill (\tikzlastnode) circle[radius=.2em]; \endpgfextra}}]
\begin{scope}[every node/.style={circle, fill=black, inner sep=.5mm, outer sep=0}]
\node (1) {};
\node[right=3mm of 1] (2) {};
\node[right=3mm of 2] (3) {};
\node[right=3mm of 3] (4) {};
      \end{scope}
      \begin{scope}
        [thick, rounded corners=8pt]
        \draw[green]       (2)-- (3);
         \draw[green] (1)-- ($(2) + (0,3mm)$)--($(3) + (0,3mm)$)-- (4);
         \draw[red,dotted] (1)-- ($(2) + (0,2mm)$)-- (3);
      \end{scope}
      \end{tikzpicture}}="4132",
( 24, 24) *+{\begin{tikzpicture}
      [ mycell/.style={draw, minimum size=1em},
        dot/.style={mycell,
            append after command={\pgfextra \fill (\tikzlastnode) circle[radius=.2em]; \endpgfextra}}]
\begin{scope}[every node/.style={circle, fill=black, inner sep=.5mm, outer sep=0}]
\node (1) {};
\node[right=3mm of 1] (2) {};
\node[right=3mm of 2] (3) {};
\node[right=3mm of 3] (4) {};
      \end{scope}
      \begin{scope}
        [thick, rounded corners=8pt]
        \draw[green]
        (1)-- (2); 
        \draw[green] (2)-- ($(3) - (0,3mm)$)-- (4);
        \draw[red,dotted] (3)-- (4);
      \end{scope}
      \end{tikzpicture}}="3421",
(  0, 24) *+{\begin{tikzpicture}
      [ mycell/.style={draw, minimum size=1em},
        dot/.style={mycell,
            append after command={\pgfextra \fill (\tikzlastnode) circle[radius=.2em]; \endpgfextra}}]
\begin{scope}[every node/.style={circle, fill=black, inner sep=.5mm, outer sep=0}]
\node (1) {};
\node[right=3mm of 1] (2) {};
\node[right=3mm of 2] (3) {};
\node[right=3mm of 3] (4) {};
      \end{scope}
      \begin{scope}
        [thick, rounded corners=8pt]
        \draw[green] (2)-- ($(3) + (0,3mm)$)-- (4);
        \draw[green] (1)-- ($(2) - (0,3mm)$)-- (3);
        \draw[red,dotted] (2)-- (3);
      \end{scope}
      \end{tikzpicture}}="4231",
(-24, 24) *+{\begin{tikzpicture}
      [ mycell/.style={draw, minimum size=1em},
        dot/.style={mycell,
            append after command={\pgfextra \fill (\tikzlastnode) circle[radius=.2em]; \endpgfextra}}]
\begin{scope}[every node/.style={circle, fill=black, inner sep=.5mm, outer sep=0}]
\node (1) {};
\node[right=3mm of 1] (2) {};
\node[right=3mm of 2] (3) {};
\node[right=3mm of 3] (4) {};
      \end{scope}
      \begin{scope}
        [thick, rounded corners=8pt]
        \draw[green] (3)-- (4);
        \draw[green] (1)-- ($(2) + (0,3mm)$)-- (3);
        \draw[red,dotted] (1)-- (2);
      \end{scope}
      \end{tikzpicture}}="4312",
(  0, 36) *+{\begin{tikzpicture}
      [ mycell/.style={draw, minimum size=1em},
        dot/.style={mycell,
            append after command={\pgfextra \fill (\tikzlastnode) circle[radius=.2em]; \endpgfextra}}]
\begin{scope}[every node/.style={circle, fill=black, inner sep=.5mm, outer sep=0}]
\node (1) {};
\node[right=3mm of 1] (2) {};
\node[right=3mm of 2] (3) {};
\node[right=3mm of 3] (4) {};
      \end{scope}
      \begin{scope}
        [thick, rounded corners=8pt]
        \draw[green]
        (1)-- (2)-- (3)--(4); 
      \end{scope}
      \end{tikzpicture}}="4321",
\ar "2134";"1234"
\ar "1324";"1234"
\ar "1243";"1234"
\ar "2314";"2134"
\ar
"2143";"2134"
\ar "3124";"1324"
\ar "1342";"1324"
\ar
"2143";"1243"
\ar "1423";"1243"
\ar "2341";"2314"
\ar "3214";"2314"
\ar "3214";"3124"
\ar "3142";"3124"
\ar
"2413";"2143"
\ar "3142";"1342"
\ar "1432";"1342"
\ar "1432";"1423"
\ar "4123";"1423"
\ar "3241";"2341"
\ar
"2431";"2341"
\ar "3241";"3214"
\ar "3412";"3142"
\ar
"2431";"2413"
\ar "4213";"2413"
\ar "4132";"1432"
\ar
"4213";"4123"
\ar "4132";"4123"
\ar "3421";"3241"
\ar
"4231";"2431"
\ar "3421";"3412"
\ar "4312";"3412"
\ar
"4231";"4213"
\ar "4312";"4132"
\ar "4321";"3421"
\ar "4321";"4231"
\ar "4321";"4312"
\end{xy}.
\end{align*}

Furthermore, we show that mutation on $\DAD$ is also compatible with mutation on $\twosmc\Pi$ (Proposition \ref{main2}). 
Our main results are summarized as follows.

\begin{thm}(see Proposition \ref{h^0}, Theorems \ref{bij nad}, \ref{two poset iso})
We have the following commutative diagrams, and all maps are poset  isomorphisms.
\begin{align*}
\begin{xy}
( 0,  8) *+{\DAD}   ="01",
(45,  8) *+{\twosmc\Pi }   ="11",
( 0, -8) *+{\NAD}    ="00",
(45, -8) *+{\sbrick \Pi,}     ="10",
(-45,  8) *+{W}   ="02",
(-45,  -8) *+{W}   ="12".
\ar^{\Psi} "01";"11"
\ar^{\Phi}  "00";"10"
\ar^{\mathbb{G}}^{}   "01";"00"
\ar^{H^0}   "11";"10"
\ar^{\D} "02";"01"
\ar^{\G} "12";"00"
\ar@{=}_{}  "02";"12"
\end{xy}
\end{align*}
\end{thm}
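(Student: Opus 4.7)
The plan is to take the bottom row as the base case and lift everything by a choice of shift. I treat as given Reading's bijection $\G\colon W \to \NAD$ arising from canonical join representations in the weak order, the bijection $\Phi \colon \NAD \to \sbrick \Pi$ of Theorem \ref{bij nad}, and the weak-order--semibrick correspondence of \cite{M1,A1} which makes the bottom row a composition of poset isomorphisms. The top-row map is defined by sending, for $(\alpha,c) \in \DAD$ with arcs partitioned as $\alpha = \alpha_g \sqcup \alpha_r$ into green and red,
\[
\Psi(\alpha,c) \;:=\; \{\Phi(a) \mid a \in \alpha_g\} \;\cup\; \{\Phi(a)[1] \mid a \in \alpha_r\}.
\]

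The technical heart of the argument is showing $\Psi(\alpha,c)\in\twosmc\Pi$. Cardinality is $|\alpha|=n$ by construction of $\DAD$. The $\Hom$- and $\Ext^{1}$-vanishing among the green summands (and separately among the red summands) follows from Theorem \ref{bij nad} applied to $\alpha_{g}$ and $\alpha_{r}$ viewed as noncrossing subdiagrams, using the self-injectivity of $\Pi$ to upgrade $\Hom$-vanishing to $\Ext^{1}$-vanishing. The genuinely new content is the cross-vanishing $\Hom(\Phi(a_{g}),\Phi(a_{r})[1])=0$ and $\Hom(\Phi(a_{r}),\Phi(a_{g}))=0$, which I would read off from the admissibility condition on a DAD: the combinatorial compatibility between green and red arcs will be shown to translate into the statement that $\{\Phi(a):a\in\alpha_{g}\}$ lies in the torsion-free class $\FF$ of the torsion pair generated by $\{\Phi(a):a\in\alpha_{r}\}$, which forces the required vanishing by the standard torsion-pair formalism. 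Once $\Psi$ is well-defined, commutativity of the right square is immediate, since $H^{0}$ kills the shifted red part and so $H^{0}\circ\Psi=\Phi\circ\mathbb{G}$ by construction; the left square commutes by definition of $\D$ as the grading enhancement of $\G$ obtained by recording descent data on $W$.

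Bijectivity of $\Psi$ then follows by fiber-counting over $H^{0}$: by \cite{A1} the fiber of $H^{0}$ over a semibrick $\sS$ is in natural bijection with the sub-semibricks of a complementary semibrick, and this count matches the number of green/red colorings extending the NAD $\Phi^{-1}(\sS)$ to a DAD. Bijectivity of $\D$ then drops out formally from commutativity together with bijectivity of $\G$, $\mathbb{G}$, $\Phi$, $H^{0}$, and $\Psi$. For the poset statement, each partial order is generated as the transitive closure of its covering relations: the weak order on $W$ by simple-reflection covers $w \lessdot s_{i}w$, $\DAD$ by the mutations of Definition \ref{muta} whose compatibility with the $W$-action is Proposition \ref{comm} and whose generated order is Corollary \ref{dad partial order}, and $\twosmc\Pi$ by left mutation of SMCs. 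Proposition \ref{main2} transports mutation on $\DAD$ to mutation on $\twosmc\Pi$ along $\Psi$, so $\Psi$ is a poset morphism and, being bijective, a poset isomorphism; the other maps are handled the same way.

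The main obstacle is the well-definedness of $\Psi$, and within that the cross-vanishing between green bricks and shifted red bricks: this is the point where the purely combinatorial DAD admissibility condition has to be matched with the module-theoretic structure of $\mod\Pi$. Everything else, including the bijectivity of $\D$ and the poset statements, reduces to bookkeeping over the already-established bottom-row correspondences once the SMC verification is in hand.
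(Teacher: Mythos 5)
Your overall architecture (establish the two squares, then transport the order via mutation) is close to the paper's, but the step you yourself identify as the technical heart --- well-definedness of $\Psi$ --- is where the argument breaks. First, your torsion-pair mechanism is oriented backwards: for a 2-term SMC the degree-zero part generates the \emph{torsion} class and the shifted part generates the corresponding \emph{torsion-free} class, so the cross-vanishing you must prove is $\Hom_\Pi(S(a_g),S(a_r))=0$ and $\Ext^1_\Pi(S(a_g),S(a_r))=0$ for green $a_g$ and red $a_r$; your stated condition $\Hom(\Phi(a_r),\Phi(a_g))=0$ is the wrong direction and is in fact false (for $w=213$ the green arc gives $S_1$ and the red arc gives the uniserial module of length two with top $S_1$, which admits a nonzero map to $S_1$). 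Second, even with the correct orientation, the assertion that the green bricks generate a torsion class whose torsion-free class contains the red bricks is essentially the theorem being proved and cannot simply be read off from the combinatorics; and the $\Ext^1$ cross-vanishing is not a consequence of torsion-pair formalism at all, since $\Hom(\TT,\FF)=0$ says nothing about $\Ext^1(\TT,\FF)$. The paper avoids all of this by proving well-definedness inductively along mutations: $\Psi(\D(\mathrm{id}))$ is the set of simple modules, and Proposition \ref{main2} (using Lemma \ref{mut 2-simple} together with the explicit computations of Lemma \ref{hom lemm}, Lemma \ref{ext lemm} and Proposition \ref{cw}) shows that each mutation of $\DAD$ is matched by a left mutation of 2-term SMCs, so the image never leaves $\twosmc\Pi$. (A smaller slip: $\Ext^1$-vanishing \emph{within} the green part is not an SMC axiom and indeed fails for adjacent green arcs sharing an endpoint.)

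Two further points. Your fiber-counting over $H^0$ rests on a false premise: by Theorem \ref{smc-fsbrick bij}, $H^0:\twosmc\Pi\to\fLsbrick\Pi$ is already a bijection, so every fiber is a singleton; once well-definedness of $\Psi$ is known, its bijectivity is immediate from the commutative square, with no counting needed. Finally, you take the bottom row to be a poset isomorphism by citing \cite{M1,A1}, but the correspondence there is constructed through $\tau$-tilting modules and is not a priori the map $\Phi\circ\G$; the paper's logic runs the other way, deducing that $\Phi$ is a poset isomorphism (Theorem \ref{two poset iso}(2)) from the mutation-theoretic proof that $\Psi$ is one. As written, your proof of the order statement for $\Phi$ is circular unless you independently identify the two correspondences.
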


We remark that (semi)bricks over the preprojective algebras also have been studied  
by \cite{A2,DIRRT} in a complete different way: 
Asai gave a classification of (semi)bricks in terms of Young diagram-like notation \cite{A2} and Demonet-Iyama-Reading-Reiten-Thomas applied their reduction technique to the algebras and reduced this problem to  gentle algebras \cite{DIRRT}. 
On the other hand, Barnard-Carroll-Zhu \cite{BCZ} studied torsion classes for a quotient of the preprojective algebras via arc diagrams. 
An explicit relationship between this quotient algebra and the preprojective algebra can be explained by using  results of \cite{AIR,A1,M1,DIRRT}. In this paper, we will establish the above result without depending on these results. 



As an application, we study semibricks over quotient algebras of preprojective algebras of type $A_n$. 
Let $I$ be a two-sided ideal of $\Pi$. 
Using the above map, we define a subposet of $\NAD$ as follows 
$$\NAD_I:=\{\G\in\NAD\ |\ \Phi(\G)\subset\mod(\Pi/I)\}.$$

Then we have the following corollary, which recovers some part of results shown by the other authors such as 
\cite{DIRRT},\cite{BCZ}, \cite{IT}, \cite{Ao} and  \cite{AMN}.

\begin{cor}
Let $I$ be a two-sided ideal of $\Pi$. 
We have a poset isomorphism
$$\NAD_I\longrightarrow\sbrick(\Pi/I).$$
In particular, we have the following results. 
\begin{itemize}
\item[(i)] Let $I_{\textnormal{cyc}}$ be the ideal of $\Pi$ generated by all 2-cycles. 
Then we have a poset isomorphism
$$\NAD\longrightarrow\sbrick (\Pi/I_{\textnormal{cyc}}).$$

\item[(ii)] 
Let $\vec{Q}$ be a linear quiver of type $A_n$ and 
$\RNAD$ the set of right noncrossing arc diagrams (Definition \ref{RNAD}). 
Then we have a poset isomorphism
$$\RNAD\longrightarrow\sbrick K\vec{Q}.$$

\item[(iii)] 
Let 
$\ANAD$ be the set of alternating noncrossing arc diagrams (Definition \ref{ANAD}). 
Then we have a poset isomorphism
$$\ANAD\longrightarrow\sbrick (\Pi/\rad^2(\Pi)).$$
\end{itemize}
\end{cor}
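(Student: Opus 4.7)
The plan is to deduce this corollary from the main bijection $\Phi\colon\NAD \to \sbrick \Pi$ established in Theorem \ref{bij nad} by a restriction argument, and then to verify the three specific cases combinatorially. The key observation is that $\sbrick(\Pi/I)$ identifies naturally with the subset of $\sbrick \Pi$ consisting of those semibricks whose bricks are all annihilated by $I$: the full embedding $\mod(\Pi/I) \hookrightarrow \mod \Pi$ preserves bricks and hence semibricks, and conversely a semibrick over $\Pi$ descends to $\Pi/I$ precisely when $I$ annihilates each of its elements. Since $\NAD_I$ is defined as $\{\G \in \NAD : \Phi(\G) \subset \mod(\Pi/I)\}$, the map $\Phi$ restricts to a bijection $\NAD_I \to \sbrick(\Pi/I)$, and since both partial orders are inherited from the ambient posets, this restriction is automatically a poset isomorphism.

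For the three particular cases, the task reduces to identifying $\NAD_I$ combinatorially. I would use the explicit description of each arc-brick $\Phi(\alpha)$ given earlier in the paper: an arc $\alpha$ determines both the dimension vector of $\Phi(\alpha)$ and the action of every double-quiver arrow on it, so the vanishing condition $I \cdot \Phi(\alpha) = 0$ becomes a transparent condition on the shape of $\alpha$.

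For (i), each generator of $I_{\text{cyc}}$ is a 2-cycle $\alpha\alpha^*$ or $\alpha^*\alpha$ at some vertex; from the string-like shape of $\Phi(\alpha)$ such a composition necessarily factors through a map that is already zero, so every arc-brick already lies in $\mod(\Pi/I_{\text{cyc}})$, giving $\NAD_{I_{\text{cyc}}} = \NAD$. For (ii), $K\vec{Q}$ is the quotient of $\Pi$ by the ideal generated by the reverse arrows relative to the orientation $\vec{Q}$, and $\Phi(\alpha)$ descends to $K\vec{Q}$ iff the string of $\alpha$ uses no reverse arrow, which is exactly the defining condition of $\RNAD$. For (iii), $\rad^2(\Pi)$ is generated by length-two paths, so $\Phi(\alpha)$ lies in $\mod(\Pi/\rad^2(\Pi))$ iff no two consecutive arrows of its string point in the same direction, which is the alternating condition defining $\ANAD$.

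The main obstacle is the arc-to-module dictionary for $\Phi$: one must describe each $\Phi(\alpha)$ explicitly enough to read off the action of any arrow and to identify its annihilator. Once this dictionary is in hand, each of (i)--(iii) becomes a short combinatorial check matching the algebraic condition on $\Phi(\alpha)$ with the defining property of $\NAD$, $\RNAD$, or $\ANAD$, and the poset statement follows immediately by restriction as in the first paragraph.
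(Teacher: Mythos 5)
Your overall strategy is the same as the paper's: restrict the poset isomorphism $\Phi\colon\NAD\to\sbrick\Pi$ to $\NAD_I$, identify the image with $\sbrick(\Pi/I)$, and then verify combinatorially that $\NAD_{I_{\mathrm{cyc}}}=\NAD$, $\NAD_{\langle a_1^-,\dots,a_{n-1}^-\rangle}=\RNAD$ and $\NAD_{\rad^2(\Pi)}=\ANAD$. The three combinatorial identifications are correct and match the paper's (for (i), the point is exactly what you say: each segment of an arc uses only one of $a_i,a_i^-$, so any $2$-cycle acts as zero on an arc module). Note, though, that for the poset statement you should invoke Theorem \ref{two poset iso}, not just the bijection of Theorem \ref{bij nad}.

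The one place where you are too quick is the claim that the restriction is ``automatically a poset isomorphism'' because ``both partial orders are inherited from the ambient posets.'' That is true of $\NAD_I$ by definition, but the partial order on $\sbrick(\Pi/I)$ is \emph{not} defined as the induced order from $\sbrick\Pi$: by Definition-Theorem \ref{poset} it is defined intrinsically by $\sS\le\sS'$ iff $\T_{\Pi/I}(\sS)\subset\T_{\Pi/I}(\sS')$, where the smallest torsion classes are taken inside $\mod(\Pi/I)$. One must check that this intrinsic order agrees with the order induced from $\sbrick\Pi$, i.e.\ that $\T_\Pi(\sS)\subset\T_\Pi(\sS')$ holds if and only if $\T_{\Pi/I}(\sS)\subset\T_{\Pi/I}(\sS')$ does. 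The paper isolates exactly this step as Lemma \ref{sbrick iso}, proving it via the identities $\T_\Pi(\sS)\cap\mod(\Pi/I)=\T_{\Pi/I}(\sS)$ and $\T_\Pi(\T_{\Pi/I}(\sS))=\T_\Pi(\sS)$. This is a short argument, but it is a step, not a tautology, and your write-up should include it.
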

Finally we expect that the techniques of this  paper shed a new light on the study of SMCs
and can be widely applied to  several classes of algebras formulated by geometric models   such as gentle algebras.

\section{Preliminaries}
\textbf{Notation}
Fix a natural number $n\geq1$. 
For a finite dimensional algebra $A$ over an algebraically closed field $K$, we denote by $\mod A$ the category of finitely generated right $A$-modules and by $\Db(\mod A)$ the bounded derived category of $\mod A$.

\subsection{Symmetric groups and canonical join representations} 
We recall basic definitions and terminologies about the symmetric group.
\begin{defi}\label{def join}
Let $L$ be a finite lattice.
\begin{itemize}
\item[(1)] An element of $x\in L$ is called \emph{join-irreducible} 
if it is not the minimum element of $L$ and if $x=y\vee z$ 
for some $y,z \in L$, then $y = x$ or $z = x$, where $\vee$ denotes the join of $y$ and $z$, or equivalently, the join-irreducible elements are those which cover precisely one element.

\item[(2)] 
We call $C\subset L$ a \emph{canonical join representation} if
\begin{itemize}
\item[(i)] $x=\bigvee_{c \in C} c$. 
\item[(ii)] For any proper subset $C' \subsetneq C$,
the join $\bigvee_{c \in C'} c$ never coincides with $x$. 
\item[(iii)] If $U \subset L$ satisfies the properties (i) and (ii), then, 
for every $c \in C$, there exists $u \in U$ such that $ c\le u$.
\end{itemize}
\end{itemize}
In this case, we also call $x=\bigvee_{c \in C} c$ a canonical join representation.
\end{defi}

Note that if $x=\bigvee_{c \in C} c$ is a canonical join representation, then it is unique and each element $c\in C$ is join-irreducible. 
As a finite lattice $L$, we consider the symmetric group in this paper. 

Fix a natural number $n\geq1$ and let $[n+1]:=\{1,2,\ldots,n+1\}$.  
A permutation $w$ of $[n+1]$ is a sequence $w=w_1w_2\ldots w_{n+1}$ such that $\{w_1,\ldots,w_{n+1}\}=[n+1]$. 
The weak order on permutations is a partial order whose cover relations are $w_1\cdots w_{n+1}\lessdot v_1\cdots v_{n+1}$ whenever there exists $i$ such that $w_i=v_{i+1}<v_i=w_{i+1}$ and such that $w_j=v_j$ for $j\notin\{i,i+1\}$. 
This is the Weyl group of type $A_{n}$ and 
it is generated by the transpositions $s_i = (i\ i+1)$ ($1\leq i\leq n$), which satisfy the relations $s^2_i = 1$, $s_is_j = s_js_i$ $(|i-j| \geq 2)$ and $s_is_{i+1}s_i = s_{i+1}s_is_{i+1}$. 
We denote by $W=W_{n}$ for the set of permutations of $[n+1]$ and we regard $W$ as a partially ordered set defined by the weak order. 

In this paper, for $w=w_1w_2\ldots w_{n+1}$, we define the left action of $s_i$ by
\begin{eqnarray*}
s_iw&=&w_{s_i(1)}w_{s_i(2)}\cdots w_{s_i(n+1)}\\
&=&w_1w_2\cdots w_{i-1}w_{i+1}w_iw_{i+2}\cdots w_{n+1}.
\end{eqnarray*}


We say that a \emph{descent} of a permutation is a pair $w_i$ and $w_{i+1}$ of adjacent entries such that $w_i>w_{i+1}$. 
A permutation is join-irreducible if and only if it has exactly one descent (see \cite{R3} for more detail).

\subsection{Arc diagrams}
Next we introduce arc diagrams, which is the main subject of this paper. We slightly modify the original definition of \cite{R3} for our convenience. In particular, we arrange points of an arc diagram horizontally.

\begin{defi}\label{def nad}
Put $n+1$ distinct points $1,2,\ldots,n+1$
on a horizontal line from left to right. 
\begin{itemize}
\item[(1)] An \emph{arc} is a curve which connects a point $p\in[n+1]$ to a strictly higher point $q\in[n+1]$, moving monotone upwards from $p$ to $q$ and passing either to the left (=above) or to the right (=below) of each point between $p$ and $q$.
\item[(2)] 
An \emph{arc diagram} consists of some (or no) arcs connecting the points. 
We identify an arc (or arcs) with an arc diagram. 
\item[(3)] An arc diagram is called \emph{noncrossing} if it 
satisfies the following two conditions:
\begin{enumerate}
\item[(nc1)]
No two arcs intersect, except possibly at their endpoints.
\item[(nc2)] No two arcs share the same right endpoint or the same left endpoint.
\end{enumerate}
\end{itemize}
\end{defi}

In this paper, by \emph{arcs intersect}, we always mean that arcs cross in their interiors. 
We also treat arcs and arc diagrams up to isotopies, 
or equivalently, 
an arc diagram is determined by which pairs of points are joined by an arc and which points are above and below of each arc. 
We denote the set of arc diagrams (resp. noncrossing arc diagrams) by $\AD=\AD_n$ (resp. $\NAD=\NAD_n$).

\begin{exam}
This is an example of a noncrossing arc diagram consisting of three arcs. 
$$\begin{tikzpicture}
      [ mycell/.style={draw, minimum size=1em},
        dot/.style={mycell,
            append after command={\pgfextra \fill (\tikzlastnode) circle[radius=.2em]; \endpgfextra}}]

\begin{scope}[every node/.style={circle, fill=black, inner sep=.5mm, outer sep=0}]
        \node (1) {};
        \node[right=5mm of 1] (2) {};
        \node[right=5mm of 2] (3) {};
        \node[right=5mm of 3] (4) {};
        \node[right=5mm of 4] (5) {};
        \node[right=5mm of 5] (6) {};
        \node[right=5mm of 6] (7) {};
        \node[right=5mm of 7] (8) {};
        \node[right=5mm of 8] (9) {};
      \end{scope}
      \begin{scope}
        [thick, rounded corners=8pt]
        \draw
        (2)-- ($(3) - (0,5mm)$) -- ($(4) - (0,5mm)$) -- ($(5) + (0,5mm)$) -- 
        ($(6) - (0,5mm)$) -- ($(7) + (0,5mm)$) -- (8) ;
         \draw        (3)-- (4) ;
         \draw        (4)-- ($(5) + (0,7mm)$) --(6) ;
\end{scope}
\end{tikzpicture}$$
The following arc diagrams does not satisfy (nc1) and (nc2), respectively. 

$$\begin{tikzpicture}
      [ mycell/.style={draw, minimum size=1em},
        dot/.style={mycell,
            append after command={\pgfextra \fill (\tikzlastnode) circle[radius=.2em]; \endpgfextra}}]

      \begin{scope}[every node/.style={circle, fill=black, inner sep=.5mm, outer sep=0}]
        \node (1) {};
        \node[left=5mm of 1] (0) {};
        \node[right=5mm of 1] (2) {};
        \node[right=5mm of 2] (3) {};
        \node[right=5mm of 3] (4) {};
        \node[right=5mm of 4] (5) {};
      \end{scope}
      \begin{scope}
        [thick, rounded corners=8pt]
        \draw  (0)-- ($(2) - (0,5mm)$) -- ($(3) + (0,5mm)$) -- (4) ;
        \draw  (1)-- ($(2) + (0,5mm)$) -- ($(3) - (0,5mm)$) --($(4) - (0,5mm)$) -- (5) ;
      \end{scope}
      \end{tikzpicture}\ \ \ \textnormal{and}\ \ \ 
      \begin{tikzpicture}
      [ mycell/.style={draw, minimum size=1em},
        dot/.style={mycell,
            append after command={\pgfextra \fill (\tikzlastnode) circle[radius=.2em]; \endpgfextra}}]

      \begin{scope}[every node/.style={circle, fill=black, inner sep=.5mm, outer sep=0}]
        \node (1) {};
        \node[right=5mm of 1] (2) {};
        \node[right=5mm of 2] (3) {};
        \node[right=5mm of 3] (4) {};
        \node[right=5mm of 4] (5) {};
      \end{scope}
      \begin{scope}
        [thick, rounded corners=8pt]
     \draw  (1)-- ($(2) + (0,5mm)$) -- ($(3) - (0,5mm)$) --($(4) - (0,5mm)$) -- (5) ;
        \draw  (3)-- ($(4) + (0,5mm)$) -- (5) ;
      \end{scope}
      \end{tikzpicture}$$

\end{exam}

Next we relate arc diagrams with elements of $W$ following \cite{R3}. 
We first define some set of lines. 

\begin{defi}\label{def dad}
Given a permutation $w=w_1 \ldots w_{n+1}$, write each entry $w_i$ at the point $(i,w_i)$ in the plane $\{(a,b)\ |\ 1\leq a,b\leq n+1\}$. 
\begin{itemize}
\item[(1)]
We draw \emph{green lines} between $(i,w_i)$ and $(i+1,w_{i+1})$ if 
$w_i>w_{i+1}$. We denote the set of green lines by $\G(w)$. 
\item[(2)]
We draw \emph{red lines} between $(i,w_i)$ and $(i+1,w_{i+1})$ if 
$w_i<w_{i+1}$. We denote the set of red lines by $\R(w)$. 
\item[(3)] We let $\D(w):=\G(w)\cup\R(w)$.
\end{itemize}
\end{defi}
 
For $w\in W$, define $\G(w)$ as above, and 
move all of the points into a single vertical line, allowing the lines 
to curve but not to pass through any of the points. Then rotate it 90 degrees clockwise. 
These lines become the arcs in an arc diagram. 
By abuse of notation, 
we also write this map by $\G:W\to\AD_n$. 
Similarly, we define 
$\R:W\to\AD_n$ and $\D:W\to\AD_n$. 
We call $\G(w)$ (resp. $\R(w)$, $\D(w)$) \emph{a green arc diagram} (resp. \emph{a red arc diagram}, \emph{a double arc diagram}) of $w$.
We denote the set of double arc diagrams by $\DAD=\DAD_n$, that is, 
$\DAD=\{\D(w)\ |\ w\in W\}$. 

\begin{remk}
The name of green and red arcs comes from \emph{maximal green sequences} in the sense of Keller (we refer to \cite{K}). This name will be justified later (subsection \ref{mutation}).  
\end{remk}

\begin{exam}\label{exam1}
Let $w=53271468$. The left figure shows the construction of green lines and the right one is the corresponding green arc diagram $\G(w)$. 
 \[ \begin{tikzpicture}[
      mycell/.style={draw, minimum size=1em},
      dot/.style={mycell,
          append after command={\pgfextra \fill (\tikzlastnode) circle[radius=.2em]; \endpgfextra}}]

  \matrix (m) [matrix of nodes, row sep=-\pgflinewidth, column sep=-\pgflinewidth,
      nodes={mycell}, nodes in empty cells]
  {
  &&&&&&&|[dot]|\\
  &&&|[dot]|&&&&\\
  &&&&&&|[dot]|&\\
  |[dot]|&&&&&&&\\
  &&&&&|[dot]|&&\\
    &|[dot]|&&&&&&\\
      &&|[dot]|&&&&&\\
      &&&&|[dot]|&&&\\
  };
  \begin{scope}[on background layer, every path/.style={very thick}]
    \draw[green] (m-4-1.center) -- (m-6-2.center);
\draw[green] (m-7-3.center) -- (m-6-2.center);
\draw[green] (m-2-4.center) -- (m-8-5.center);
  \end{scope}
  \foreach \i [count=\xi from 1] in  {8,...,1}{
      \node[mycell,label=left:\footnotesize\xi] at (m-\i-1) {};
  }
  \foreach \i [count=\xi from 1] in  {1,...,8}{
      \node[mycell, label=below:\footnotesize\xi] at (m-8-\i) {};  }
  \end{tikzpicture} \quad \quad 
 \begin{tikzpicture}
      [ mycell/.style={draw, minimum size=1em},
        dot/.style={mycell,
            append after command={\pgfextra \fill (\tikzlastnode) circle[radius=.2em]; \endpgfextra}}]

      \begin{scope}[every node/.style={circle, fill=black, inner sep=.5mm, outer sep=0}]
       \node (1) {};
        \node[right=5mm of 1] (2) {};
        \node[right=5mm of 2] (3) {};
        \node[right=5mm of 3] (4) {};
        \node[right=5mm of 4] (5) {};
        \node[right=5mm of 5] (6) {};
        \node[right=5mm of 6] (7) {};
        \node[right=5mm of 7] (8) {};
      \end{scope}
      \begin{scope}
        [thick, rounded corners=8pt]
        \draw[green]
        (1)-- ($(2) - (0,5mm)$) -- ($(3) - (0,5mm)$) -- ($(4) + (0,5mm)$) -- 
        ($(5) - (0,5mm)$) -- ($(6) + (0,5mm)$) -- (7) ;
         \draw[green] (2)-- (3); 
        \draw[green]        (3)-- ($(4) + (0,7mm)$) -- (5); 
      \end{scope}
      \end{tikzpicture}\] 
Moreover, the red arc diagram $\R(w)$ and the double arc diagram $\D(w)$ are, respectively, illustrated as follows.
\[\begin{tikzpicture}
      [ mycell/.style={draw, minimum size=1em},
        dot/.style={mycell,
            append after command={\pgfextra \fill (\tikzlastnode) circle[radius=.2em]; \endpgfextra}}]

      \begin{scope}[every node/.style={circle, fill=black, inner sep=.5mm, outer sep=0}]
       \node (1) {};
        \node[right=5mm of 1] (2) {};
        \node[right=5mm of 2] (3) {};
        \node[right=5mm of 3] (4) {};
        \node[right=5mm of 4] (5) {};
        \node[right=5mm of 5] (6) {};
        \node[right=5mm of 6] (7) {};
        \node[right=5mm of 7] (8) {};
      \end{scope}
      \begin{scope}
        [thick, rounded corners=8pt]
        \draw[red,dotted]
        (2)-- ($(3) - (0,3mm)$) -- ($(4) + (0,3mm)$) -- ($(5) - (0,3mm)$) -- 
        ($(6) + (0,3mm)$) -- (7) ;
         \draw[red,dotted]        (1)-- ($(2) - (0,5mm)$) -- ($(3) - (0,5mm)$) -- (4); 
        \draw[red,dotted]        (4)-- ($(5) - (0,5mm)$) -- (6); 
              \draw[red,dotted]        (6)-- ($(7) - (0,5mm)$) -- (8); 
      \end{scope}
      \end{tikzpicture}\quad \quad  \begin{tikzpicture}
      [ mycell/.style={draw, minimum size=1em},
        dot/.style={mycell,
            append after command={\pgfextra \fill (\tikzlastnode) circle[radius=.2em]; \endpgfextra}}]

      \begin{scope}[every node/.style={circle, fill=black, inner sep=.5mm, outer sep=0}]
       \node (1) {};
        \node[right=5mm of 1] (2) {};
        \node[right=5mm of 2] (3) {};
        \node[right=5mm of 3] (4) {};
        \node[right=5mm of 4] (5) {};
        \node[right=5mm of 5] (6) {};
        \node[right=5mm of 6] (7) {};
        \node[right=5mm of 7] (8) {};
      \end{scope}
      \begin{scope}
        [thick, rounded corners=8pt]
        \draw[red,dotted]
        (2)-- ($(3) - (0,3mm)$) -- ($(4) + (0,5mm)$) -- ($(5) - (0,3mm)$) -- 
        ($(6) + (0,5mm)$) -- (7) ;
         \draw[red,dotted]        (1)-- ($(2) - (0,5mm)$) -- ($(3) - (0,5mm)$) -- (4); 
        \draw[red,dotted]       (4)-- ($(5) - (0,5mm)$) -- (6); 
              \draw[red,dotted]        (6)-- ($(7) - (0,5mm)$) -- (8); 
            \draw[green]
        (1)-- ($(2) - (0,2mm)$) -- ($(3) - (0,4mm)$) -- ($(4) + (0,3mm)$) -- 
        ($(5) - (0,4mm)$) -- ($(6) + (0,3mm)$) -- (7) ;
         \draw[green] (2)-- (3); 
        \draw[green][green]        (3)-- ($(4) + (0,7mm)$) -- (5);      
      \end{scope}
      \end{tikzpicture}\]
Here we write green arcs as solid lines and red arcs as dashed lines. 
\end{exam}

The following lemma follows immediately from its construction. 

\begin{lemm}\label{basic proper}
\begin{itemize}
\item[(1)] An element $w\in W$ has exactly one decent (or equivalently, join-irreducible) 
if and only if 
$\G(w)$ consists of one arc.
\item[(2)] A double arc diagram always consists of $n$ arcs and any two arcs never intersect. 
\end{itemize}
\end{lemm}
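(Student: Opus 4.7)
\medskip\noindent
\emph{Proof plan.} For (1), I will unpack Definition \ref{def dad}: a green line in the grid is drawn between $(i,w_i)$ and $(i+1,w_{i+1})$ precisely when $w_i>w_{i+1}$, so green lines are in bijection with the descents of $w$, and the collapse-and-rotate procedure described right after Definition \ref{def dad} sends each green line to exactly one arc of $\G(w)$. Hence $\G(w)$ consists of a single arc if and only if $w$ has exactly one descent, which by the fact recorded just before Definition \ref{def nad} is equivalent to $w$ being join-irreducible.

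For the arc count in (2), observe that since $w$ is a permutation, for each $i\in\{1,\ldots,n\}$ exactly one of $w_i>w_{i+1}$ or $w_i<w_{i+1}$ holds. Thus the grid carries exactly $n$ line segments and $\D(w)=\G(w)\cup\R(w)$ has $n$ arcs in total.

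The non-intersection assertion is the main point. My strategy is to exploit the fact that in the grid picture the $n$ segments form a single piecewise-linear path
\[
(1,w_1)-(2,w_2)-\cdots-(n+1,w_{n+1})
\]
which is strictly monotone in the $x$-coordinate and hence non-self-intersecting: two distinct segments either share an endpoint (when they come from adjacent columns $(i,i+1)$ and $(i+1,i+2)$) or lie in disjoint vertical strips. The next step is to record how the horizontal compression acts on a segment between columns $(i,i+1)$: any point $(k,w_k)$ with $k\neq i,i+1$ lies strictly to its left if $k<i$ and strictly to its right if $k>i+1$, so in the compressed picture the resulting arc passes the value $w_k$ on the corresponding side.

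Now let $\alpha_1,\alpha_2$ be arcs coming from segments in columns $(i,i+1)$ and $(j,j+1)$ with $i<j$. If $j=i+1$ they meet only at the shared endpoint at height $w_{i+1}$; if $j\geq i+2$ and their value intervals are nested or disjoint, the arcs can clearly be drawn disjointly. The only interesting case is when the intervals interlace, and then the compression rule above forces $\alpha_1$ to pass the relevant interior endpoint of $\alpha_2$ on the right, while $\alpha_2$ passes the relevant interior endpoint of $\alpha_1$ on the left. Hence before rotation the two arcs bulge to opposite sides of the vertical axis, so after the $90^\circ$ clockwise rotation they lie on opposite sides of the horizontal line and are disjoint. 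The main obstacle is giving a rigorous account of the compression rule — that is, determining the isotopy class of the arc produced by each grid segment — after which the combinatorial case analysis above suffices.
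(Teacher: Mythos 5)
Your proposal is correct and follows the only natural route, which is also the paper's: the paper gives no written argument for this lemma, asserting that it ``follows immediately from its construction,'' and your unwinding of Definition \ref{def dad} (descents $\leftrightarrow$ green lines $\leftrightarrow$ arcs, plus the fact that the $n$ segments form an embedded monotone path whose left/right passing data forces a consistent relative ordering of any two arcs on their overlap) is exactly that construction made explicit. One phrase is loose --- two interlacing arcs do not literally ``lie on opposite sides of the horizontal line'' after rotation; the correct conclusion, which your preceding sentence already supplies, is that the passing data keeps one arc on the same side of the other throughout the overlap of their spans, so no crossing is forced.
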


Moreover we recall the following important result due to Reading. 

\begin{thm}\cite{R3}\label{Reading}
The map $\G:W\to\AD$ and $\R:W\to\AD$
gives a bijection 
$$W\to \NAD.$$

Moreover $\G(w)$ gives a canonical join representation of $w$ by identifying the arcs with join-irreducible elements.
\end{thm}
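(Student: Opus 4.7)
The plan is to first prove that $\G:W\to\NAD$ is a bijection, deduce the corresponding statement for $\R$ by a symmetric argument, and finally derive the canonical join representation. Concretely, the three steps are: (i) verify $\G(w)\in\NAD$ for every $w\in W$; (ii) construct an explicit inverse $\NAD\to W$; (iii) identify the arcs of $\G(w)$ with join-irreducibles whose join equals $w$, and check the universal properties in Definition \ref{def join}.

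For step (i), property (nc2) is immediate: if two green arcs shared the same left (resp.\ right) endpoint, two distinct descents of $w$ would produce the same value $w_{i+1}$ (resp.\ $w_i$), contradicting that $w$ is a permutation. For (nc1), I would work in the plane before the rotation step: the straight segments joining consecutive descent positions either have disjoint $x$-ranges, or are adjacent and meet only at a common dot $(i+1,w_{i+1})$. The final rotation routes each curve to one side or the other of the intermediate dots without introducing new crossings, so noncrossingness survives.

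For step (ii), I would build the inverse by induction on $n$ by peeling off an extremal arc. Given $\delta\in\NAD_n$, select the arc $\alpha$ with the largest top endpoint $q$ (or, if no arc is incident to the topmost point, simply delete that point). Such an $\alpha$ dictates where $q$ and its descent partner must sit in the word, and removing $\alpha$ and relabeling yields a noncrossing arc diagram on fewer points to which the induction hypothesis applies. A direct check then confirms that $\G$ composed with this recursive inverse is the identity on $\NAD$ and vice versa. The bijection $\R:W\to\NAD$ follows by reflecting the plane vertically, which swaps descents with ascents and replaces $w$ with a suitable conjugate.

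For step (iii), Lemma \ref{basic proper}(1) identifies the join-irreducibles of $W$ as the permutations $v$ with $\G(v)$ a single arc; hence the arcs of $\G(w)$ furnish a canonical set $C(w)$ of join-irreducibles. I would first show $w=\bigvee_{c\in C(w)}c$ by comparing inversion sets: each arc determines the inversion set of its associated join-irreducible, and one verifies that the inversion set of $w$ is the closure, under the inversion-set closure on a finite Coxeter group, of the union over $C(w)$. The minimality and universality conditions (ii) and (iii) of Definition \ref{def join} are then forced by the noncrossing condition: any strictly smaller join representation, or any representation whose elements fail to be dominated by the $c_\alpha$'s, would produce a pair of arcs that either cross or share an endpoint. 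The main obstacle — and the core of Reading's original argument — is precisely this last verification, which rests on the semidistributivity of the weak order on the symmetric group and a careful analysis of how canonical joins interact with cover relations.
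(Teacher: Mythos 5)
The paper does not actually prove this theorem: its ``proof'' is a one-line citation to \cite[Theorems 2.4 and 3.1]{R3} for $\G$, plus the remark that $\R$ is handled by a symmetric argument. You are therefore attempting to reconstruct Reading's argument itself, and your outline does follow its overall shape (well-definedness, explicit inverse, identification of arcs with join-irreducibles). Your treatment of (nc2) in step (i) is correct, and your reduction of $\R$ to $\G$ via a vertical reflection is essentially right (the permutation is replaced by $w_0w$, a left translate by the longest element, not a conjugate, but the idea works).

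There are, however, genuine gaps. In step (ii) the peeling argument is under-determined: knowing the arc $\alpha$ with the largest top endpoint $q$ tells you that $q$ immediately precedes its descent partner somewhere in the word, but not \emph{where} the pair sits relative to the other entries; that information is encoded in which side $\alpha$ passes of the intermediate points and in how $\alpha$ nests with the remaining arcs, and you never explain how to extract it. Reading's actual inverse is a global algorithm that uses all arcs simultaneously to totally order the points, not a local peeling. More seriously, step (iii) does not prove the second assertion of the theorem: you verify (plausibly) that $w=\bigvee_{c\in C(w)}c$ via inversion sets, but conditions (ii) and (iii) of Definition \ref{def join} --- irredundancy and the universality/refinement property --- are exactly what makes a join representation \emph{canonical}, and you explicitly defer them to ``semidistributivity of the weak order and a careful analysis,'' acknowledging that this is ``the core of Reading's original argument.'' A proof that names its decisive step and then omits it is not a proof; as written, the proposal establishes at best the bijection $W\to\NAD$ modulo the inverse construction, and only the existence of \emph{a} join representation, not the canonical one.
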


\begin{proof}
\cite[Theorems 2.4 and 3.1]{R3} shows that the map $\G:W\to\NAD$, where $\G$ is denoted by $\delta$ in \cite{R3}, gives a bijection and $\G(w)$
gives a canonical join representation of $w$. 
Moreover, it is easy to check a bijection $\R:W\to\NAD$ by 
the similar argument of \cite[Theorem 3.1]{R3}. 
\end{proof}


The following lemma also follows immediately from its construction and Theorem \ref{Reading}. 
\begin{prop}\label{h^0}
Define the maps 
$$\mathbb{G}:\DAD\to\NAD, \  \D(w)\mapsto\G(w),\ \ \  \mathbb{R}:\DAD\to\NAD, \   \D(w)\mapsto\R(w)$$
by the restriction of arcs of $\D(w)$ to green arcs and red arcs, respectively. 
Then the following diagram commutes, and all maps are bijections.
\begin{align*}
\begin{xy}
( 0,  8) *+{\DAD}   ="01",
( 0, -8) *+{\NAD}    ="00",
(-45,  8) *+{W}   ="02",
(-45,  -8) *+{W}   ="12",
\ar_{\mathbb{G}}^{\mathbb{R}}   "01";"00"
\ar^{\D} "02";"01"
\ar^{\G} "12";"00"
\ar_{\R} "12";"00"
\ar@{=}_{}  "02";"12"
\end{xy}
\end{align*}
\end{prop}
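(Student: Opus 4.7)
The plan is to deduce everything from Theorem \ref{Reading}, which already asserts that $\G, \R : W \to \NAD$ are bijections, together with the definition $\DAD = \{\D(w) \mid w \in W\}$.

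First I would verify the commutativity of the two triangles, which is essentially a tautology: by Definition \ref{def dad}, $\D(w) = \G(w) \cup \R(w)$ as a set of lines, and this remains true after the rotation/straightening procedure that produces the arc diagrams. Since the maps $\mathbb{G}$ and $\mathbb{R}$ are defined precisely by restricting a double arc diagram to its green (resp.\ red) arcs, we obtain $\mathbb{G} \circ \D = \G$ and $\mathbb{R} \circ \D = \R$ directly from the definitions.

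Next I would show $\D : W \to \DAD$ is a bijection. Surjectivity is immediate from the definition of $\DAD$. For injectivity, suppose $\D(w) = \D(w')$; then applying $\mathbb{G}$ gives $\G(w) = \G(w')$, and since $\G$ is injective by Theorem \ref{Reading}, $w = w'$. (One could equally well use $\mathbb{R}$.)

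Finally, from $\G = \mathbb{G} \circ \D$ with both $\G$ and $\D$ bijections, it follows that $\mathbb{G} = \G \circ \D^{-1}$ is a bijection; likewise $\mathbb{R} = \R \circ \D^{-1}$ is a bijection. There is no real obstacle here: the proposition is a formal consequence of Theorem \ref{Reading} together with the fact that $\D$ is defined so as to package the green and red data simultaneously. The only point that requires any care is confirming that $\mathbb{G}$ and $\mathbb{R}$ land in $\NAD$ rather than merely in $\AD$, but this is exactly the content of $\G(w), \R(w) \in \NAD$ from Theorem \ref{Reading}.
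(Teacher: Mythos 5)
Your argument is correct and is exactly the route the paper takes: the paper dispatches this proposition by noting it "follows immediately from its construction and Theorem \ref{Reading}," and your write-up simply makes explicit the tautological commutativity, the surjectivity of $\D$ by definition of $\DAD$, and the injectivity via $\mathbb{G}\circ\D=\G$. Nothing is missing.
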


\begin{remk}\label{G,R,D} 
In \cite{R3}, an explicit bijective map $\NAD\to W$ is explained. 
It implies that, for a given $\G(w),\R(w)$ or $\D(w)$, it is possible to calculate one of the others. 
\end{remk}

\begin{exam}
We illustrate the correspondence $W$ and the set of 
arc diagrams. 
Here we write $W$ by the Hasse quiver. 
The corresponding quiver consisting of $\NAD$ and $\DAD$ is justified from the view point of mutation and partial orders, which will be explained in section \ref{mutation}.

\begin{enumerate}
\item[(1)] We give the correspondence $\G:W\to \NAD$ for $n=2$. 
\[\xymatrix@C10pt@R10pt{
&&(321)\ar[dr]\ar[dl]&&\\
&(312)\ar[d]&&(231)\ar[d]&\\
&(132)\ar[dr]&&(213)\ar[dl]&\\
&&(123)&&}\ \ \ \ 
\xymatrix@C10pt@R10pt{&&
 \begin{tikzpicture}
      [ mycell/.style={draw, minimum size=1em},
        dot/.style={mycell,
            append after command={\pgfextra \fill (\tikzlastnode) circle[radius=.2em]; \endpgfextra}}]
\begin{scope}[every node/.style={circle, fill=black, inner sep=.5mm, outer sep=0}]
       \node (1) {};
        \node[right=3mm of 1] (2) {};
        \node[right=3mm of 2] (3) {};
      \end{scope}
      \begin{scope}
        [thick, rounded corners=8pt]
        \draw[green]
        (1)-- (2)-- (3); 
      \end{scope}
      \end{tikzpicture}
      \ar[dr]\ar[dl]&&\\
& \begin{tikzpicture}
      [ mycell/.style={draw, minimum size=1em},
        dot/.style={mycell,
            append after command={\pgfextra \fill (\tikzlastnode) circle[radius=.2em]; \endpgfextra}}]
\begin{scope}[every node/.style={circle, fill=black, inner sep=.5mm, outer sep=0}]
       \node (1) {};
        \node[right=3mm of 1] (2) {};
        \node[right=3mm of 2] (3) {};
      \end{scope}
      \begin{scope}
        [thick, rounded corners=8pt]
        \draw[green]
        (1)-- ($(2) + (0,3mm)$) -- (3); 
      \end{scope}
      \end{tikzpicture}\ar[d]&& \begin{tikzpicture}
      [ mycell/.style={draw, minimum size=1em},
        dot/.style={mycell,
            append after command={\pgfextra \fill (\tikzlastnode) circle[radius=.2em]; \endpgfextra}}]
\begin{scope}[every node/.style={circle, fill=black, inner sep=.5mm, outer sep=0}]
       \node (1) {};
        \node[right=3mm of 1] (2) {};
        \node[right=3mm of 2] (3) {};
      \end{scope}
      \begin{scope}
        [thick, rounded corners=8pt]
        \draw[green]
        (1)-- ($(2) - (0,3mm)$) -- (3); 
      \end{scope}
      \end{tikzpicture}\ar[d]&\\
& \begin{tikzpicture}
      [ mycell/.style={draw, minimum size=1em},
        dot/.style={mycell,
            append after command={\pgfextra \fill (\tikzlastnode) circle[radius=.2em]; \endpgfextra}}]
\begin{scope}[every node/.style={circle, fill=black, inner sep=.5mm, outer sep=0}]
       \node (1) {};
        \node[right=3mm of 1] (2) {};
        \node[right=3mm of 2] (3) {};
      \end{scope}
      \begin{scope}
        [thick, rounded corners=8pt]
        \draw[green] (2)-- (3); 
      \end{scope}
      \end{tikzpicture}\ar[dr]&& \begin{tikzpicture}
      [ mycell/.style={draw, minimum size=1em},
        dot/.style={mycell,
            append after command={\pgfextra \fill (\tikzlastnode) circle[radius=.2em]; \endpgfextra}}]
\begin{scope}[every node/.style={circle, fill=black, inner sep=.5mm, outer sep=0}]
       \node (1) {};
        \node[right=3mm of 1] (2) {};
        \node[right=3mm of 2] (3) {};
      \end{scope}
      \begin{scope}
        [thick, rounded corners=8pt]
        \draw[green]
        (1)-- (2); 
      \end{scope}
      \end{tikzpicture}\ar[dl]&\\
&& \begin{tikzpicture}
      [ mycell/.style={draw, minimum size=1em},
        dot/.style={mycell,
            append after command={\pgfextra \fill (\tikzlastnode) circle[radius=.2em]; \endpgfextra}}]
\begin{scope}[every node/.style={circle, fill=black, inner sep=.5mm, outer sep=0}]
       \node (1) {};
        \node[right=3mm of 1] (2) {};
        \node[right=3mm of 2] (3) {};
      \end{scope}
      \begin{scope} [thick, rounded corners=8pt]
      \end{scope}
      \end{tikzpicture}&&}\]
Moreover, the correspondence $\D:W\to \DAD$ is illustrated as follows. 
\[
\xymatrix@C10pt@R10pt{&&
 \begin{tikzpicture}
      [ mycell/.style={draw, minimum size=1em},
        dot/.style={mycell,
            append after command={\pgfextra \fill (\tikzlastnode) circle[radius=.2em]; \endpgfextra}}]
\begin{scope}[every node/.style={circle, fill=black, inner sep=.5mm, outer sep=0}]
       \node (1) {};
        \node[right=3mm of 1] (2) {};
        \node[right=3mm of 2] (3) {};
      \end{scope}
      \begin{scope}
        [thick, rounded corners=8pt]
        \draw[green]
        (1)-- (2)-- (3); 
      \end{scope}
      \end{tikzpicture}
      \ar[dr]\ar[dl]&&\\
& \begin{tikzpicture}
      [ mycell/.style={draw, minimum size=1em},
        dot/.style={mycell,
            append after command={\pgfextra \fill (\tikzlastnode) circle[radius=.2em]; \endpgfextra}}]
\begin{scope}[every node/.style={circle, fill=black, inner sep=.5mm, outer sep=0}]
       \node (1) {};
        \node[right=3mm of 1] (2) {};
        \node[right=3mm of 2] (3) {};
      \end{scope}
      \begin{scope}
        [thick, rounded corners=8pt]
        \draw[green]
        (1)-- ($(2) + (0,3mm)$) -- (3); 
        \draw[red,dotted] (1)-- (2);
      \end{scope}
      \end{tikzpicture}\ar[d]&& \begin{tikzpicture}
      [ mycell/.style={draw, minimum size=1em},
        dot/.style={mycell,
            append after command={\pgfextra \fill (\tikzlastnode) circle[radius=.2em]; \endpgfextra}}]
\begin{scope}[every node/.style={circle, fill=black, inner sep=.5mm, outer sep=0}]
       \node (1) {};
        \node[right=3mm of 1] (2) {};
        \node[right=3mm of 2] (3) {};
      \end{scope}
      \begin{scope}
        [thick, rounded corners=8pt]
        \draw[green]
        (1)-- ($(2) - (0,3mm)$) -- (3); 
        \draw[red,dotted] (2)-- (3);
      \end{scope}
      \end{tikzpicture}\ar[d]&\\
& \begin{tikzpicture}
      [ mycell/.style={draw, minimum size=1em},
        dot/.style={mycell,
            append after command={\pgfextra \fill (\tikzlastnode) circle[radius=.2em]; \endpgfextra}}]
\begin{scope}[every node/.style={circle, fill=black, inner sep=.5mm, outer sep=0}]
       \node (1) {};
        \node[right=3mm of 1] (2) {};
        \node[right=3mm of 2] (3) {};
      \end{scope}
      \begin{scope}
        [thick, rounded corners=8pt]
        \draw[red,dotted]
        (1)-- ($(2) + (0,3mm)$) -- (3);
        \draw[green] (2)-- (3); 
      \end{scope}
      \end{tikzpicture}\ar[dr]&& \begin{tikzpicture}
      [ mycell/.style={draw, minimum size=1em},
        dot/.style={mycell,
            append after command={\pgfextra \fill (\tikzlastnode) circle[radius=.2em]; \endpgfextra}}]
\begin{scope}[every node/.style={circle, fill=black, inner sep=.5mm, outer sep=0}]
       \node (1) {};
        \node[right=3mm of 1] (2) {};
        \node[right=3mm of 2] (3) {};
      \end{scope}
      \begin{scope}
        [thick, rounded corners=8pt]
        \draw[green]
        (1)-- (2); 
        \draw[red,dotted] (1)-- ($(2) - (0,3mm)$) -- (3);
      \end{scope}
      \end{tikzpicture}\ar[dl]&\\
&& \begin{tikzpicture}
      [ mycell/.style={draw, minimum size=1em},
        dot/.style={mycell,
            append after command={\pgfextra \fill (\tikzlastnode) circle[radius=.2em]; \endpgfextra}}]
\begin{scope}[every node/.style={circle, fill=black, inner sep=.5mm, outer sep=0}]
       \node (1) {};
        \node[right=3mm of 1] (2) {};
        \node[right=3mm of 2] (3) {};
      \end{scope}
      \begin{scope} [thick, rounded corners=8pt]
        \draw[red,dotted] (1)-- (2)-- (3); 
      \end{scope}
      \end{tikzpicture}&&}\]
Here we write green arcs as normal lines  and red arcs as dotted lines.

\item[(2)] We give the correspondence $\D:W\to \DAD$ for $n=3$. 

\begin{align*}
\begin{xy}
(  0,-36) *+{(1234)}="1234",
( 24,-24) *+{(2134)}="2134",
(  0,-24) *+{(1324)}="1324",
(-24,-24) *+{(1243)}="1243",
( 48,-12) *+{(2314)}="2314",
( 24,-12) *+{(3124)}="3124",
(  0,-12) *+{(2143)}="2143",
(-24,-12) *+{(1342)}="1342",
(-48,-12) *+{(1423)}="1423",
( 60,  0) *+{(2341)}="2341",
( 36,  0) *+{(3214)}="3214",
( 12,  0) *+{(3142)}="3142",
(-12,  0) *+{(2413)}="2413",
(-36,  0) *+{(1432)}="1432",
(-60,  0) *+{(4123)}="4123",
( 48, 12) *+{(3241)}="3241",
( 24, 12) *+{(2431)}="2431",
(  0, 12) *+{(3412)}="3412",
(-24, 12) *+{(4213)}="4213",
(-48, 12) *+{(4132)}="4132",
( 24, 24) *+{(3421)}="3421",
(  0, 24) *+{(4231)}="4231",
(-24, 24) *+{(4312)}="4312",
(  0, 36) *+{(4321)}="4321",
\ar "2134";"1234"
\ar "1324";"1234"
\ar "1243";"1234"
\ar "2314";"2134"
\ar|\hole "2143";"2134"
\ar "3124";"1324"
\ar "1342";"1324"
\ar|\hole "2143";"1243"
\ar "1423";"1243"
\ar "2341";"2314"
\ar "3214";"2314"
\ar "3214";"3124"
\ar "3142";"3124"
\ar|\hole "2413";"2143"
\ar "3142";"1342"
\ar "1432";"1342"
\ar "1432";"1423"
\ar "4123";"1423"
\ar "3241";"2341"
\ar|\hole "2431";"2341"
\ar "3241";"3214"
\ar "3412";"3142"
\ar|\hole "2431";"2413"
\ar "4213";"2413"
\ar "4132";"1432"
\ar|\hole "4213";"4123"
\ar "4132";"4123"
\ar "3421";"3241"
\ar|\hole "4231";"2431"
\ar "3421";"3412"
\ar "4312";"3412"
\ar|\hole "4231";"4213"
\ar "4312";"4132"
\ar "4321";"3421"
\ar "4321";"4231"
\ar "4321";"4312"
\end{xy}.
\end{align*}  

\begin{align*}
\begin{xy}
(  0,-36) *+{\begin{tikzpicture}
      [ mycell/.style={draw, minimum size=1em},
        dot/.style={mycell,
            append after command={\pgfextra \fill (\tikzlastnode) circle[radius=.2em]; \endpgfextra}}]
\begin{scope}[every node/.style={circle, fill=black, inner sep=.5mm, outer sep=0}]
\node (1) {};
\node[right=3mm of 1] (2) {};
\node[right=3mm of 2] (3) {};
\node[right=3mm of 3] (4) {};
      \end{scope}
      \begin{scope}
        [thick, rounded corners=8pt]
         \draw[red,dotted] (1)--($(2)$)--($(3)$)-- (4);
      \end{scope}
      \end{tikzpicture}}="1234",
( 24,-24) *+{\begin{tikzpicture}
      [ mycell/.style={draw, minimum size=1em},
        dot/.style={mycell,
            append after command={\pgfextra \fill (\tikzlastnode) circle[radius=.2em]; \endpgfextra}}]
\begin{scope}[every node/.style={circle, fill=black, inner sep=.5mm, outer sep=0}]
\node (1) {};
\node[right=3mm of 1] (2) {};
\node[right=3mm of 2] (3) {};
\node[right=3mm of 3] (4) {};
      \end{scope}
      \begin{scope}
        [thick, rounded corners=8pt]
        \draw[green]
        (1)--(2); 
         \draw[red,dotted] (4)-- (3);
          \draw[red,dotted] (1)--($(2) - (0,3mm)$)-- (3);
      \end{scope}
      \end{tikzpicture}}="2134",
(  0,-24) *+{\begin{tikzpicture}
      [ mycell/.style={draw, minimum size=1em},
        dot/.style={mycell,
            append after command={\pgfextra \fill (\tikzlastnode) circle[radius=.2em]; \endpgfextra}}]
\begin{scope}[every node/.style={circle, fill=black, inner sep=.5mm, outer sep=0}]
\node (1) {};
\node[right=3mm of 1] (2) {};
\node[right=3mm of 2] (3) {};
\node[right=3mm of 3] (4) {};
      \end{scope}
      \begin{scope}
        [thick, rounded corners=8pt]
        \draw[green]
        (2)-- (3); 
        \draw[red,dotted] (1)--($(2) + (0,3mm)$)-- (3);
        \draw[red,dotted] (2)-- ($(3) - (0,3mm)$)-- (4);
      \end{scope}
      \end{tikzpicture}}="1324",
(-24,-24) *+{\begin{tikzpicture}
      [ mycell/.style={draw, minimum size=1em},
        dot/.style={mycell,
            append after command={\pgfextra \fill (\tikzlastnode) circle[radius=.2em]; \endpgfextra}}]
\begin{scope}[every node/.style={circle, fill=black, inner sep=.5mm, outer sep=0}]
\node (1) {};
\node[right=3mm of 1] (2) {};
\node[right=3mm of 2] (3) {};
\node[right=3mm of 3] (4) {};
      \end{scope}
      \begin{scope}
        [thick, rounded corners=8pt]
        \draw[green]
        (3)-- (4); 
        \draw[red,dotted] (1)-- ($(2)$)--($(3) + (0,3mm)$)-- (4);
      \end{scope}
      \end{tikzpicture}}="1243",
( 48,-12) *+{\begin{tikzpicture}
      [ mycell/.style={draw, minimum size=1em},
        dot/.style={mycell,
            append after command={\pgfextra \fill (\tikzlastnode) circle[radius=.2em]; \endpgfextra}}]
\begin{scope}[every node/.style={circle, fill=black, inner sep=.5mm, outer sep=0}]
\node (1) {};
\node[right=3mm of 1] (2) {};
\node[right=3mm of 2] (3) {};
\node[right=3mm of 3] (4) {};
      \end{scope}
      \begin{scope}
        [thick, rounded corners=8pt]
        \draw[green]
        (1)-- ($(2) - (0,2mm)$)-- (3); 
                  \draw[red,dotted] (1)-- ($(2) - (0,3mm)$)--($(3) - (0,3mm)$)-- (4);
  \draw[red,dotted] (2)-- (3);
      \end{scope}
      \end{tikzpicture}}="2314",
( 24,-12) *+{\begin{tikzpicture}
      [ mycell/.style={draw, minimum size=1em},
        dot/.style={mycell,
            append after command={\pgfextra \fill (\tikzlastnode) circle[radius=.2em]; \endpgfextra}}]
\begin{scope}[every node/.style={circle, fill=black, inner sep=.5mm, outer sep=0}]
\node (1) {};
\node[right=3mm of 1] (2) {};
\node[right=3mm of 2] (3) {};
\node[right=3mm of 3] (4) {};
      \end{scope}
      \begin{scope}
        [thick, rounded corners=8pt]
        \draw[green]
        (1)-- ($(2) + (0,3mm)$)-- (3); 
          \draw[red,dotted] (1)-- ($(2)$)--($(3) - (0,2mm)$)-- (4);
      \end{scope}
      \end{tikzpicture}}="3124",
(  0,-12) *+{\begin{tikzpicture}
      [ mycell/.style={draw, minimum size=1em},
        dot/.style={mycell,
            append after command={\pgfextra \fill (\tikzlastnode) circle[radius=.2em]; \endpgfextra}}]
\begin{scope}[every node/.style={circle, fill=black, inner sep=.5mm, outer sep=0}]
\node (1) {};
\node[right=3mm of 1] (2) {};
\node[right=3mm of 2] (3) {};
\node[right=3mm of 3] (4) {};
      \end{scope}
      \begin{scope}
        [thick, rounded corners=8pt]
        \draw[green]
        (1)-- (2); 
         \draw[green] (3)-- (4); 
          \draw[red,dotted] (1)-- ($(2) - (0,3mm)$)--($(3) + (0,3mm)$)-- (4);
      \end{scope}
      \end{tikzpicture}}="2143",
(-24,-12) *+{\begin{tikzpicture}
      [ mycell/.style={draw, minimum size=1em},
        dot/.style={mycell,
            append after command={\pgfextra \fill (\tikzlastnode) circle[radius=.2em]; \endpgfextra}}]
\begin{scope}[every node/.style={circle, fill=black, inner sep=.5mm, outer sep=0}]
\node (1) {};
\node[right=3mm of 1] (2) {};
\node[right=3mm of 2] (3) {};
\node[right=3mm of 3] (4) {};
      \end{scope}
      \begin{scope}
        [thick, rounded corners=8pt]
        \draw[green]        (2)--($(3) - (0,3mm)$)-- (4); 
      \draw[red,dotted] (3)-- (4); 
        \draw[red,dotted] (1)-- ($(2) + (0,2mm)$)-- (3);
      \end{scope}
      \end{tikzpicture}}="1342",
(-48,-12) *+{\begin{tikzpicture}
      [ mycell/.style={draw, minimum size=1em},
        dot/.style={mycell,
            append after command={\pgfextra \fill (\tikzlastnode) circle[radius=.2em]; \endpgfextra}}]
\begin{scope}[every node/.style={circle, fill=black, inner sep=.5mm, outer sep=0}]
\node (1) {};
\node[right=3mm of 1] (2) {};
\node[right=3mm of 2] (3) {};
\node[right=3mm of 3] (4) {};
      \end{scope}
      \begin{scope}
        [thick, rounded corners=8pt]
        \draw[green]        (2)-- ($(3) + (0,2mm)$)-- (4); 
        \draw[red,dotted] (2)-- (3); 
        \draw[red,dotted] (1)-- ($(2) + (0,3mm)$)--($(3) + (0,3mm)$)-- (4);
      \end{scope}
      \end{tikzpicture}}="1423",
( 60,  0) *+{\begin{tikzpicture}
      [ mycell/.style={draw, minimum size=1em},
        dot/.style={mycell,
            append after command={\pgfextra \fill (\tikzlastnode) circle[radius=.2em]; \endpgfextra}}]
\begin{scope}[every node/.style={circle, fill=black, inner sep=.5mm, outer sep=0}]
\node (1) {};
\node[right=3mm of 1] (2) {};
\node[right=3mm of 2] (3) {};
\node[right=3mm of 3] (4) {};
      \end{scope}
      \begin{scope}
        [thick, rounded corners=8pt]
        \draw[green]
        (1)-- ($(2) - (0,3mm)$)--($(3) - (0,3mm)$)-- (4); 
        \draw[red,dotted] (2)-- (3)-- (4); 
      \end{scope}
      \end{tikzpicture}}="2341",
( 36,  0) *+{\begin{tikzpicture}
      [ mycell/.style={draw, minimum size=1em},
        dot/.style={mycell,
            append after command={\pgfextra \fill (\tikzlastnode) circle[radius=.2em]; \endpgfextra}}]
\begin{scope}[every node/.style={circle, fill=black, inner sep=.5mm, outer sep=0}]
\node (1) {};
\node[right=3mm of 1] (2) {};
\node[right=3mm of 2] (3) {};
\node[right=3mm of 3] (4) {};
      \end{scope}
      \begin{scope}
        [thick, rounded corners=8pt]
        \draw[green]
        (1)-- (2)-- (3); 
        \draw[red,dotted] (1)-- ($(2) - (0,3mm)$)-- ($(3) - (0,3mm)$)-- (4); 
      \end{scope}
      \end{tikzpicture}}="3214",
( 12,  0) *+{\begin{tikzpicture}
      [ mycell/.style={draw, minimum size=1em},
        dot/.style={mycell,
            append after command={\pgfextra \fill (\tikzlastnode) circle[radius=.2em]; \endpgfextra}}]
\begin{scope}[every node/.style={circle, fill=black, inner sep=.5mm, outer sep=0}]
\node (1) {};
\node[right=3mm of 1] (2) {};
\node[right=3mm of 2] (3) {};
\node[right=3mm of 3] (4) {};
      \end{scope}
      \begin{scope}
        [thick, rounded corners=8pt]
        \draw[green]
        (1)-- ($(2) + (0,3mm)$)-- (3); 
          \draw[green] (2)-- ($(3) - (0,3mm)$)-- (4); 
           \draw[red,dotted](1)-- ($(2) + (0,2mm)$)-- ($(3) - (0,2mm)$)-- (4); 
      \end{scope}
      \end{tikzpicture}}="3142",
(-12,  0) *+{\begin{tikzpicture}
      [ mycell/.style={draw, minimum size=1em},
        dot/.style={mycell,
            append after command={\pgfextra \fill (\tikzlastnode) circle[radius=.2em]; \endpgfextra}}]
\begin{scope}[every node/.style={circle, fill=black, inner sep=.5mm, outer sep=0}]
\node (1) {};
\node[right=3mm of 1] (2) {};
\node[right=3mm of 2] (3) {};
\node[right=3mm of 3] (4) {};
      \end{scope}
      \begin{scope}
        [thick, rounded corners=8pt]
         \draw[green] (1)-- ($(2) - (0,2mm)$)-- ($(3) + (0,2mm)$)-- (4); 
          \draw[red,dotted](2)-- ($(3) + (0,4mm)$)-- (4);
           \draw[red,dotted](1)-- ($(2) - (0,4mm)$)-- (3);
      \end{scope}
      \end{tikzpicture}}="2413",
(-36,  0) *+{\begin{tikzpicture}
      [ mycell/.style={draw, minimum size=1em},
        dot/.style={mycell,
            append after command={\pgfextra \fill (\tikzlastnode) circle[radius=.2em]; \endpgfextra}}]
\begin{scope}[every node/.style={circle, fill=black, inner sep=.5mm, outer sep=0}]
\node (1) {};
\node[right=3mm of 1] (2) {};
\node[right=3mm of 2] (3) {};
\node[right=3mm of 3] (4) {};
      \end{scope}
      \begin{scope}
        [thick, rounded corners=8pt]
        \draw[green]
        (2)-- (3)-- (4); 
         \draw[red,dotted](1)-- ($(2) + (0,3mm)$)-- ($(3) + (0,3mm)$)-- (4);
      \end{scope}
      \end{tikzpicture}}="1432",
(-60,  0) *+{\begin{tikzpicture}
      [ mycell/.style={draw, minimum size=1em},
        dot/.style={mycell,
            append after command={\pgfextra \fill (\tikzlastnode) circle[radius=.2em]; \endpgfextra}}]
\begin{scope}[every node/.style={circle, fill=black, inner sep=.5mm, outer sep=0}]
\node (1) {};
\node[right=3mm of 1] (2) {};
\node[right=3mm of 2] (3) {};
\node[right=3mm of 3] (4) {};
      \end{scope}
      \begin{scope}
        [thick, rounded corners=8pt]
        \draw[green] (1)-- ($(2) + (0,3mm)$)-- ($(3) + (0,3mm)$)-- (4) ; 
         \draw[red,dotted] (1)-- (2);
          \draw[red,dotted] (2)-- (3);
      \end{scope}
      \end{tikzpicture}}="4123",
( 48, 12) *+{\begin{tikzpicture}
      [ mycell/.style={draw, minimum size=1em},
        dot/.style={mycell,
            append after command={\pgfextra \fill (\tikzlastnode) circle[radius=.2em]; \endpgfextra}}]
\begin{scope}[every node/.style={circle, fill=black, inner sep=.5mm, outer sep=0}]
\node (1) {};
\node[right=3mm of 1] (2) {};
\node[right=3mm of 2] (3) {};
\node[right=3mm of 3] (4) {};
      \end{scope}
      \begin{scope}
        [thick, rounded corners=8pt]
        \draw[green]        (2)-- (3); 
        \draw[green] (1)-- ($(2) - (0,3mm)$)-- ($(3) - (0,3mm)$)-- (4);
        \draw[red,dotted] (2)-- ($(3) - (0,2mm)$)-- (4);
      \end{scope}
      \end{tikzpicture}}="3241",
( 24, 12) *+{\begin{tikzpicture}
      [ mycell/.style={draw, minimum size=1em},
        dot/.style={mycell,
            append after command={\pgfextra \fill (\tikzlastnode) circle[radius=.2em]; \endpgfextra}}]
\begin{scope}[every node/.style={circle, fill=black, inner sep=.5mm, outer sep=0}]
\node (1) {};
\node[right=3mm of 1] (2) {};
\node[right=3mm of 2] (3) {};
\node[right=3mm of 3] (4) {};
      \end{scope}
      \begin{scope}
        [thick, rounded corners=8pt]
        \draw[green]   (3)-- (4);
     \draw[green] (1)-- ($(2) - (0,3mm)$)--  (3); 
     \draw[red,dotted] (2)-- ($(3) + (0,2mm)$)-- (4);
      \end{scope}
      \end{tikzpicture}}="2431",
(  0, 12) *+{\begin{tikzpicture}
      [ mycell/.style={draw, minimum size=1em},
        dot/.style={mycell,
            append after command={\pgfextra \fill (\tikzlastnode) circle[radius=.2em]; \endpgfextra}}]
\begin{scope}[every node/.style={circle, fill=black, inner sep=.5mm, outer sep=0}]
\node (1) {};
\node[right=3mm of 1] (2) {};
\node[right=3mm of 2] (3) {};
\node[right=3mm of 3] (4) {};
      \end{scope}
      \begin{scope}
        [thick, rounded corners=8pt]
         \draw[green] (1)-- ($(2) + (0,3mm)$)-- ($(3) - (0,3mm)$)-- (4);
         \draw[red,dotted] (1)-- (2);
         \draw[red,dotted] (3)-- (4);
      \end{scope}
      \end{tikzpicture}}="3412",
(-24, 12) *+{\begin{tikzpicture}
      [ mycell/.style={draw, minimum size=1em},
        dot/.style={mycell,
            append after command={\pgfextra \fill (\tikzlastnode) circle[radius=.2em]; \endpgfextra}}]
\begin{scope}[every node/.style={circle, fill=black, inner sep=.5mm, outer sep=0}]
\node (1) {};
\node[right=3mm of 1] (2) {};
\node[right=3mm of 2] (3) {};
\node[right=3mm of 3] (4) {};
      \end{scope}
      \begin{scope}
        [thick, rounded corners=8pt]
        \draw[green]   (1)-- (2);
        \draw[green] (2)-- ($(3) + (0,3mm)$)-- (4);
        \draw[red,dotted] (1)-- ($(2) - (0,2mm)$)-- (3);
      \end{scope}
      \end{tikzpicture}}="4213",
(-48, 12) *+{\begin{tikzpicture}
      [ mycell/.style={draw, minimum size=1em},
        dot/.style={mycell,
            append after command={\pgfextra \fill (\tikzlastnode) circle[radius=.2em]; \endpgfextra}}]
\begin{scope}[every node/.style={circle, fill=black, inner sep=.5mm, outer sep=0}]
\node (1) {};
\node[right=3mm of 1] (2) {};
\node[right=3mm of 2] (3) {};
\node[right=3mm of 3] (4) {};
      \end{scope}
      \begin{scope}
        [thick, rounded corners=8pt]
        \draw[green]       (2)-- (3);
         \draw[green] (1)-- ($(2) + (0,3mm)$)--($(3) + (0,3mm)$)-- (4);
         \draw[red,dotted] (1)-- ($(2) + (0,2mm)$)-- (3);
      \end{scope}
      \end{tikzpicture}}="4132",
( 24, 24) *+{\begin{tikzpicture}
      [ mycell/.style={draw, minimum size=1em},
        dot/.style={mycell,
            append after command={\pgfextra \fill (\tikzlastnode) circle[radius=.2em]; \endpgfextra}}]
\begin{scope}[every node/.style={circle, fill=black, inner sep=.5mm, outer sep=0}]
\node (1) {};
\node[right=3mm of 1] (2) {};
\node[right=3mm of 2] (3) {};
\node[right=3mm of 3] (4) {};
      \end{scope}
      \begin{scope}
        [thick, rounded corners=8pt]
        \draw[green]
        (1)-- (2); 
        \draw[green] (2)-- ($(3) - (0,3mm)$)-- (4);
        \draw[red,dotted] (3)-- (4);
      \end{scope}
      \end{tikzpicture}}="3421",
(  0, 24) *+{\begin{tikzpicture}
      [ mycell/.style={draw, minimum size=1em},
        dot/.style={mycell,
            append after command={\pgfextra \fill (\tikzlastnode) circle[radius=.2em]; \endpgfextra}}]
\begin{scope}[every node/.style={circle, fill=black, inner sep=.5mm, outer sep=0}]
\node (1) {};
\node[right=3mm of 1] (2) {};
\node[right=3mm of 2] (3) {};
\node[right=3mm of 3] (4) {};
      \end{scope}
      \begin{scope}
        [thick, rounded corners=8pt]
        \draw[green] (2)-- ($(3) + (0,3mm)$)-- (4);
        \draw[green] (1)-- ($(2) - (0,3mm)$)-- (3);
        \draw[red,dotted] (2)-- (3);
      \end{scope}
      \end{tikzpicture}}="4231",
(-24, 24) *+{\begin{tikzpicture}
      [ mycell/.style={draw, minimum size=1em},
        dot/.style={mycell,
            append after command={\pgfextra \fill (\tikzlastnode) circle[radius=.2em]; \endpgfextra}}]
\begin{scope}[every node/.style={circle, fill=black, inner sep=.5mm, outer sep=0}]
\node (1) {};
\node[right=3mm of 1] (2) {};
\node[right=3mm of 2] (3) {};
\node[right=3mm of 3] (4) {};
      \end{scope}
      \begin{scope}
        [thick, rounded corners=8pt]
        \draw[green] (3)-- (4);
        \draw[green] (1)-- ($(2) + (0,3mm)$)-- (3);
        \draw[red,dotted] (1)-- (2);
      \end{scope}
      \end{tikzpicture}}="4312",
(  0, 36) *+{\begin{tikzpicture}
      [ mycell/.style={draw, minimum size=1em},
        dot/.style={mycell,
            append after command={\pgfextra \fill (\tikzlastnode) circle[radius=.2em]; \endpgfextra}}]
\begin{scope}[every node/.style={circle, fill=black, inner sep=.5mm, outer sep=0}]
\node (1) {};
\node[right=3mm of 1] (2) {};
\node[right=3mm of 2] (3) {};
\node[right=3mm of 3] (4) {};
      \end{scope}
      \begin{scope}
        [thick, rounded corners=8pt]
        \draw[green]
        (1)-- (2)-- (3)--(4); 
      \end{scope}
      \end{tikzpicture}}="4321",
\ar "2134";"1234"
\ar "1324";"1234"
\ar "1243";"1234"
\ar "2314";"2134"
\ar|\hole "2143";"2134"
\ar "3124";"1324"
\ar "1342";"1324"
\ar|\hole "2143";"1243"
\ar "1423";"1243"
\ar "2341";"2314"
\ar "3214";"2314"
\ar "3214";"3124"
\ar "3142";"3124"
\ar|\hole "2413";"2143"
\ar "3142";"1342"
\ar "1432";"1342"
\ar "1432";"1423"
\ar "4123";"1423"
\ar "3241";"2341"
\ar|\hole "2431";"2341"
\ar "3241";"3214"
\ar "3412";"3142"
\ar|\hole "2431";"2413"
\ar "4213";"2413"
\ar "4132";"1432"
\ar|\hole "4213";"4123"
\ar "4132";"4123"
\ar "3421";"3241"
\ar|\hole "4231";"2431"
\ar "3421";"3412"
\ar "4312";"3412"
\ar|\hole "4231";"4213"
\ar "4312";"4132"
\ar "4321";"3421"
\ar "4321";"4231"
\ar "4321";"4312"
\end{xy}.
\end{align*}

\end{enumerate}
\end{exam}

The aim of this paper is to give a natural  interpretation of these arc diagrams in the representation theory of preprojective algebras. 


\subsection{Torsion classes and semibricks}
In this subsection, we recall the notion of torsion classes and semibricks. 
Let $A$ be a basic finite dimensional algebra. 
We denote by $|A|$ the number of isoclasses of indecomposable direct summands of $A$ and we assume that $|A|=n$. 

First we recall torsion classes and torsion-free classes.

\begin{defi}
A full subcategory $\mathcal{T} \subset \mod A$ is called a \emph{torsion class} 
if $\mathcal{T}$ is closed under taking extensions and factor modules. 
A torsion class $\mathcal{T} \subset \mod A$ is called \emph{functorially finite} if there exists $M\in\mod A$ such that $\mathcal{T}=\Fac M$, where $\Fac M$ is the full subcategory of $\mod A$ consisting of factor modules of finite direct sums of copies of $M$. 
Dually we can define a full subcategory $\mathcal{F} \subset \mod A$ \emph{torsion-free class} and \emph{functorially finite} torsion-free class. 
We denote $\ftors A$ by the set of functorially finite torsion classes in $\mod A$ 
and $\ftorf A$ by the set of functorially finite torsion-free classes in $\mod A$. 
\end{defi}

Next we recall the notion of bricks and semibricks.

\begin{defi}\label{semibricks}
\begin{itemize}
\item[(1)] 
A module $S$ in $\mod A$ is called a \textit{brick} 
if $\End_A(S)$ is a division $K$-algebra.
We denote by $\brick A$ the set of isoclasses of bricks in $\mod A$.  
\item[(2)] 
A subset $\sS \subset \brick A$ is called a \emph{semibrick} 
if $\Hom_A(S_i,S_j)=0$ holds for any $S_i \ne S_j \in \sS$.
We denote by $\sbrick A$ the set of semibricks in $\mod A$.   
\item[(3)] We say that a semibrick $\sS$ is \emph{left finite} (resp. \emph{right finite}) if the smallest torsion class (resp. torsion-free class)  $\T(\sS)\subset\mod A$ (resp. $\F(\sS)\subset\mod A$) containing $\sS$ is functorially finite. 
We denote by $\fLsbrick A$ (resp. $\fRsbrick A$) the set of left finite (resp. right finite) semibricks in $\mod A$.   
\end{itemize}
\end{defi}

Then we briefly recall some important results about torsion classes and semibricks. 

\begin{definition-theorem}\cite[Proposition 1.9]{A1}\cite{Ri}
\label{poset}
We have bijections 
$$\T:\fLsbrick A\to\ftors A,\ \ \ \F:\fRsbrick A\to\ftorf A.$$
For $\sS,\sS'\in\fLsbrick A$, 
we write $\sS\leq\sS'$ if $\T(\sS)\subset\T(\sS')$, and define a partial order on $\fLsbrick A$ by  $\sS\leq\sS'$. 
A partial order on $\fRsbrick A$ is similarly defined.
\end{definition-theorem}

Finally, we recall the following \emph{$\tau$-tilting finite property} 
in terms of bricks. 

\begin{thm}\label{finite brick}\cite{DIJ,A1}
Let $A$ be a finite dimensional algebra. If $\brick A$ is finite, then we have 
$$\brick A=\fLsbrick A=\fRsbrick A.$$
\end{thm}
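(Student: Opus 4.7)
The plan is to deduce this from the established equivalence between $\tau$-tilting finiteness, brick-finiteness, and the functorial finiteness of every torsion/torsion-free class, combined with the bijection between semibricks and torsion classes. I read the statement as asserting that every (semi)brick is automatically both left and right finite when $\brick A$ is finite, so I will verify the two inclusions $\sbrick A\subseteq \fLsbrick A$ and $\sbrick A\subseteq \fRsbrick A$.

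First I would invoke \cite{DIJ}, which shows that $A$ is $\tau$-tilting finite (equivalently, $\ftors A=\tors A$, i.e.\ every torsion class in $\mod A$ is functorially finite) if and only if $\brick A$ is finite. Thus the hypothesis forces every torsion class, and dually every torsion-free class, to be functorially finite. Explicitly, one can argue that if there were infinitely many bricks then the corresponding torsion classes $\T(S)$ for $S\in\brick A$ would produce infinitely many distinct torsion classes, contradicting the equivalence in \cite{DIJ}; conversely one uses that $\tau$-rigid modules are in bijection (via the brick labelling) with bricks lying in a functorially finite torsion class.

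Next I would apply Definition–Theorem~\ref{poset} (Asai \cite[Proposition 1.9]{A1} together with Ringel's classical observation). This gives the bijection $\T\colon\fLsbrick A\to\ftors A$ sending a left finite semibrick to the smallest torsion class containing it. Take any $\sS\in\sbrick A$. Then $\T(\sS)$ is a torsion class in $\mod A$, hence by the previous step it is functorially finite, i.e.\ $\T(\sS)\in\ftors A$. By the definition of left finiteness this means $\sS\in\fLsbrick A$. The dual argument, replacing $\T$ by $\F$ and using $\ftorf A=\torf A$ under the dual equivalence of \cite{DIJ}, shows $\sS\in\fRsbrick A$.

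Combining both inclusions with the obvious containments $\fLsbrick A\subseteq\sbrick A$ and $\fRsbrick A\subseteq\sbrick A$ yields the three-way equality. The only real content is the citation chain, and the main point that requires care is the correct use of the equivalence from \cite{DIJ}: brick-finiteness is a property that a priori only controls simple objects in torsion classes, so without \cite{DIJ} it is not obvious that it implies functorial finiteness of \emph{all} torsion classes. That implication is the main obstacle, and it is precisely the content of the cited result, so the argument reduces to assembling the pieces.
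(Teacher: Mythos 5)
The paper states this result without proof, citing \cite{DIJ,A1}, and your argument is exactly the intended assembly of those citations: brick-finiteness implies $\tau$-tilting finiteness, hence every torsion (and, dually, torsion-free) class is functorially finite, so by Definition--Theorem \ref{poset} and the definition of left/right finiteness every semibrick lies in $\fLsbrick A$ and $\fRsbrick A$. You also correctly read the displayed equality as $\sbrick A=\fLsbrick A=\fRsbrick A$ (the left-hand $\brick A$ is evidently a typo, as the paper's later use in Proposition \ref{arc-brick bij}(3) confirms), so the proposal is correct and matches the intended route.
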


\subsection{Simple-minded collections}

We next recall the definition of simple-minded collections in $\Db(\mod A)$. We keep the same notation as the previous subsection.

\begin{defi}\label{def sms}
A set $\xX:=\{X_1,X_2,\ldots,X_r\}$ of isoclasses of objects in $\Db(\mod A)$ is called 
a \emph{simple-minded collection} (SMC for short) if it satisfies the following conditions.
\begin{itemize}
\item[(sm1)] For any $X_i \in \xX$, the endomorphism ring 
$\End_{\Db(\mod A)}(X_i)$ is a division $K$-algebra.
\item[(sm2)]  For any $X_i \ne X_j \in \xX$,
we have $\Hom_{\Db(\mod A)}(X_i,X_j)=0$.
\item[(sm3)]  For any $X_i, X_j \in \xX$ and $m < 0$,
we have $\Hom_{\Db(\mod A)}(X_i,X_j[m])=0$.
\item[(sm4)]  The smallest thick subcategory of $\Db(\mod A)$ containing $\xX$ 
is $\Db(\mod A)$.
\end{itemize}
In this case, we always have $r=n$ \cite[Corollary 5.5]{KY}. 
We denote by $\smc A$ for the set of SMCs in $\Db(\mod A)$.
\end{defi}

Moreover we recall a partial order on $\smc A$.

\begin{prop}\label{poset sms}\cite[Proposition 7.9]{KY}
For $\xX,\xX'\in\smc A$, we write $\xX\leq \xX'$ if $$\Hom_{\Db(\mod A)}(X,X'[m])=0$$ for any $m<0$, $X\in\xX$ and $X'\in\xX'$. 
Then $\leq$ gives a partial order on $\smc A$.
\end{prop}

Moreover, we call a complex $X\in\Db(\mod A)$ \emph{2-term}
if the $i$-th cohomology $H^i(X)$ is $0$ for any $i \ne -1,0$.
A SMC $\xX$ in $\Db(\mod A)$ is called  \emph{2-term}
if any $X\in\xX$ is 2-term. 
We denote by $\twosmc A$ for the set of 2-term SMCs in $\Db(\mod A)$.

The following simple remark is also quite important.

\begin{remk}\label{rem}\cite[Remark 4.11]{BY}
If $\xX$ is a 2-term SMC, then 
every $X \in \xX$ belongs to either $\mod A$ or $(\mod A)[1]$ in $\Db(\mod A)$.
\end{remk}

Finally we recall an important relationship between 
semibricks and SMCs. 

\begin{thm}\cite{BY,KY,A1}\label{smc-fsbrick bij}
There exists a poset isomorphism and anti-isomorphism 
\begin{align*}
H^0 \colon \twosmc A \to \fLsbrick A, \quad \mathrm{}\quad H^{-1} \colon \twosmc A \to \fRsbrick A
\end{align*}
given by $\xX \mapsto H^0(\xX) = \xX \cap \mod A$ and
$\xX \mapsto H^{-1}(\xX) = \xX[-1] \cap \mod A$, respectively.
\end{thm}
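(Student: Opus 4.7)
The plan is to factor the maps $H^0$ and $H^{-1}$ through the intermediary of $2$-term silting complexes (equivalently, support $\tau$-tilting modules), combining the Koenig–Yang correspondence $\twosmc A \leftrightarrow \twosilt A$, the Adachi–Iyama–Reiten correspondence $\twosilt A \leftrightarrow \sttilt A$, and the bijection $\T \colon \fLsbrick A \to \ftors A$ of Definition-Theorem~\ref{poset}.

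First I would check that the two maps are well defined. By Remark~\ref{rem}, every $X \in \xX$ lies in $\mod A$ or in $(\mod A)[1]$, so $\xX$ decomposes disjointly as $H^0(\xX) \sqcup H^{-1}(\xX)[1]$. For $X \in H^0(\xX)$, we have $\End_A(X) = \End_{\Db(\mod A)}(X)$, so condition (sm1) makes $X$ a brick in $\mod A$, and (sm2) gives the pairwise $\Hom$-vanishing in $\mod A$; thus $H^0(\xX) \in \sbrick A$. The dual statement for $H^{-1}(\xX)$ is analogous, using that $\Hom_A(X[-1],Y[-1]) = \Hom_{\Db}(X,Y)$ for $X,Y \in (\mod A)[1]$.

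Next I would establish finiteness and invertibility simultaneously. Via Koenig–Yang, $\xX$ corresponds to a $2$-term silting complex $T$, and via Adachi–Iyama–Reiten, $T$ corresponds to a support $\tau$-tilting module $M$. Under this correspondence one shows $\T(H^0(\xX)) = \Fac M$, which is functorially finite, so $H^0(\xX) \in \fLsbrick A$; dually $H^{-1}(\xX) \in \fRsbrick A$ via $\F(H^{-1}(\xX)) = \Sub(\nu M')$ for an appropriate torsionless summand. For the inverse, starting from $\sS \in \fLsbrick A$, the functorially finite torsion class $\T(\sS)$ gives (via AIR) a support $\tau$-tilting module, then (via Koenig–Yang) a $2$-term SMC whose $H^0$ recovers $\sS$; the composition both ways is the identity. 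Poset compatibility is read off from the fact that the silting order on $\twosilt A$ corresponds under Koenig–Yang to the order on $\twosmc A$ of Definition~\ref{poset sms} and under AIR to the inclusion order on $\ftors A$, giving an isomorphism for $H^0$; the analogous chain for $H^{-1}$ uses $\ftorf A$ and reverses the order, producing the claimed anti-isomorphism.

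The main obstacle is the construction of the inverse: one must produce genuine objects of $\Db(\mod A)$ satisfying (sm1)–(sm4) out of purely module-theoretic data. The axiom (sm4), that $\xX$ generates $\Db(\mod A)$ as a thick subcategory, is the least transparent from the semibrick side and is exactly what the silting side furnishes, since the thick closure of a silting object is automatically all of $\Kb(\proj A) \simeq \Db(\mod A)$ (using finite global dimension for preprojective algebras of Dynkin type, or the general Koenig–Yang argument in the bounded derived category setting). Verifying the compatibility of the various partial orders through this chain of bijections is then the remaining bookkeeping.
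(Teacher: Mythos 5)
The paper gives no proof of this theorem; it is imported verbatim from \cite{BY,KY,A1}, and your proposal is essentially a faithful reconstruction of the argument in those sources, in particular Asai's chain of bijections $\twosmc A \leftrightarrow \twosilt A \leftrightarrow \sttilt A \leftrightarrow \ftors A \leftrightarrow \fLsbrick A$ together with the Brüstle--Yang observation (Remark \ref{rem}) that splits $\xX$ as $H^0(\xX)\sqcup H^{-1}(\xX)[1]$. One factual slip to correct: preprojective algebras of Dynkin type are self-injective and have \emph{infinite} global dimension for $n\geq 2$, so $\Kb(\proj A)\not\simeq\Db(\mod A)$ there; your fallback to the general Koenig--Yang framework (silting in $\Kb(\proj A)$ versus SMCs in $\Db(\mod A)$ for an arbitrary finite dimensional algebra) is the correct justification of (sm4), and the theorem is in any case stated for general $A$, so the argument itself is unharmed.
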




\section{Double arc diagrams and 2-term SMCs}

In this section, we establish a direct relationship between the set of noncrossing arc diagrams (resp. double arc diagrams) and the set of semibricks (resp. 2-term SMCs) of a preprojective algebra of type $A_n
$. 
In particular, we establish a bijection between $W$ and $\twosmc\Pi$, which 
gives another proof of \cite{A2}. 
For this purpose, we relate each arc (with a color) to a brick (with a shift functor). Moreover, we study hom-spaces between two bricks in terms of combinatorics of arcs. 

From now on, fix an integer $n\geq 1$. 
As before, we denote by $W=W_{n}$ for the set of permutations of $\{1,2,\ldots,n+1\}$.
Let $Q$ be the following quiver 
$$Q=(\xymatrix@C30pt@R10pt{
v_1 \ar@<-0.5ex>[r]_{a_1} &v_2 \ar@<-0.5ex>[r]_{a_2}\ar@<-0.5ex>[l]_{a_1^{-}}&\ar@<-0.5ex>[l]_{a_2^{-}}\cdots\ar@<-0.5ex>[r]_{a_{n-2}}&v_{n-1}\ar@<-0.5ex>[l]_{a_{n-2}^{-}}\ar@<-0.5ex>[r]_{a_{n-1}}& \ar@<-0.5ex>[l]_{a_{n-1}^{-}} v_n}).$$
We denote by $Q_0$ the set of vertices of $Q$, that is, 
$Q_0=\{
v_1,
\ldots,
v_n
\}$, and  $Q_1$ the set of arrows of $Q$, that is, 
$Q_1=\{
a_1,
\ldots,
a_{n-1}, a_1^{-},
\ldots,
a_{n-1}^{-}
\}$.
For the convenience, we write $a_i=a_i^+$ and hence 
we can write any arrow of $Q$ as $a_i^{\epsilon_i}$ for some $\epsilon_i\in\{+,-\}$ and $1\leq i\leq n-1.$ 
Let $\Pi=\Pi_n$ be the preprojective algebra of type $A_{n}$, that is, $\Pi=KQ/\langle\sum_{i=1}^{n-1} (a_i^{-}a_i-a_{i}a_{i}^{-} )\rangle$.

\subsection{Bricks and arc modules}
In this subsection, we give a correspondence between arcs and bricks, which is also explained in \cite[Appendix]{E}. 

\begin{defi}
For an arc $\alpha$, we define a $\Pi$-module $S(\alpha)$ by the following  steps. 
\begin{enumerate}
  \item Draw $n$ vertical dashed lines between adjacent points in the arc diagram, and name these lines as $v_1,v_2,\dots,v_n$ from left to right.
  \item Define a subquiver $Q_\alpha$ of $Q$ as follows.
  \begin{itemize}
    \item The vertex set of $Q_\alpha$ consists of $v_i \in Q_0$ such that $\alpha$ and the line $v_i$ intersect.
    \item Suppose that we have $v_i, v_{i+1} \in (Q_\alpha)_0$.
    If the segment of $\alpha$ cut by the lines $v_i$ and $v_{i+1}$ is below the point $i+1$, 
    then we put an arrow $a_i:v_{i} \to v_{i+1}$
    , and put $a_i^{-}:v_{i+1} \to v_{i}$ if the segment is above the point.
   \end{itemize}
  \item  Construct $S(\alpha)$ as a representation of $Q$ as follows. 
  \begin{itemize}
    \item To each $v_i \in Q_0$, we assign $K$ if $i \in (Q_\alpha)_0$, and $0$ otherwise.
    \item To each arrow $v_i \to v_j \in Q$, we assign the identity map if $v_i \to v_j$ belongs to $Q_\alpha$, and $0$ otherwise.
  \end{itemize}
\end{enumerate}
\end{defi}

By this correspondence, 
we define a map  
$$S:\{\textnormal{arcs}\}\to\mod\Pi, \alpha \mapsto S(\alpha),$$
and we call $S(\alpha)$ an \emph{arc module}.

Moreover, we can give another formulation of arc modules as follows.

The conditions (1) and (2) imply that 
if an arc $\alpha$ has the left endpoint $i$ and the right endpoint $j+1$, then we can define the sequence   $a_i^{\epsilon_i}a_{i+1}^{\epsilon_{i+1}}a_{i+2}^{\epsilon_{i+2}}\cdots a_{j-1}^{\epsilon_{j-1}}$ ($\epsilon\in\{+,-\}$) of arrows (possibly an idempotent) 
of $Q$. 
More precisely, if $i<j$, let ${}_k|\alpha|_{k+1}$ be the central segment of $\alpha$ cut by the lines $v_k$ and $v_{k+1}$ ($i\leq k<j$). Then ${}_k|\alpha|_{k+1}$ corresponds to the arrow $a_k:v_{k} \to v_{k+1}$ if the segment is below the point, and $a_k^{-}:v_{k+1} \to v_{k}$ if the segment is above the point. 
We repeat this procedure from $i$ to $j+1$ (left to right) and we have a unique sequence of arrows.
If $i=j$, then we associate $\alpha$ with the idempotent $e_i$.

 
By this correspondence, we associate an arc with the sequence of arrows and we call it the \emph{arrow  
sequence} of $\alpha$ (which is not necessary a path) and write $\alpha=a_i^{\epsilon_i}a_{i+1}^{\epsilon_{i+1}}\cdots a_{j-1}^{\epsilon_{j-1}}$. 

Then we define $S(a_i^{\epsilon_i}a_{i+1}^{\epsilon_{i+1}}\cdots a_{j-1}^{\epsilon_{j-1}})$ by assigning $K$ to the vertices $v_i,v_{i+1},\ldots,v_j$ and assigning the identity maps  for the arrows $a_i^{\epsilon_i},a_{i+1}^{\epsilon_{i+1}},\cdots, a_{j-1}^{\epsilon_{j-1}}$. 
Then we have  $S(\alpha)=S(a_i^{\epsilon_i}a_{i+1}^{\epsilon_{i+1}}\cdots a_{j-1}^{\epsilon_{j-1}})$.

We remark that arc modules can be defined by \emph{non-revisiting walks} as \cite{DIRRT} and by a \emph{Young diagram-like notation} as \cite{A2}. 
It is also defined by using an arc in \cite{BCZ} for a quotient algebra of the preprojective algebra.



Now we give an example. 

\begin{exam}
Let $\alpha$ be the following arc. 
    \label{fig:bex1}
\[\begin{tikzpicture}
      [ mycell/.style={draw, minimum size=1em},
        dot/.style={mycell,
            append after command={\pgfextra \fill (\tikzlastnode) circle[radius=.2em]; \endpgfextra}}]

      \begin{scope}[every node/.style={circle, fill=black, inner sep=.5mm, outer sep=0}]
        \node[right=5mm of 1] (2) {};
        \node[right=5mm of 2] (3) {};
        \node[right=5mm of 3] (4) {};
        \node[right=5mm of 4] (5) {};
        \node[right=5mm of 5] (6) {};
        \node[right=5mm of 6] (7) {};
        \node[right=5mm of 7] (8) {};
        \node[right=5mm of 8] (9) {};
      \end{scope}

      \begin{scope}[every path/.style={dashed}, every node/.style={font=\footnotesize}]
        \draw ($(2)!0.5!(3) + (0,7mm)$) node[above] {$v_1$} -- +(0,-14mm);
        \draw ($(3)!0.5!(4) + (0,7mm)$) node[above] {$v_2$} -- +(0,-14mm);
        \draw ($(4)!0.5!(5) + (0,7mm)$) node[above] {$v_3$} -- +(0,-14mm);
        \draw ($(5)!0.5!(6) + (0,7mm)$) node[above] {$v_4$} -- +(0,-14mm);
        \draw ($(6)!0.5!(7) + (0,7mm)$) node[above] {$v_5$} -- +(0,-14mm);
        \draw ($(7)!0.5!(8) + (0,7mm)$) node[above] {$v_6$} -- +(0,-14mm);
        \draw ($(8)!0.5!(9) + (0,7mm)$) node[above] {$v_7$} -- +(0,-14mm);
      \end{scope}
      \begin{scope}
        [thick, rounded corners=8pt]
        \draw
        (2)-- ($(3) - (0,5mm)$) -- ($(4) - (0,5mm)$) -- ($(5) + (0,5mm)$) -- 
        ($(6) - (0,5mm)$) -- ($(7) + (0,5mm)$) -- (8) ;
      \end{scope} 
      \end{tikzpicture}\]
Then $a_1a_2a_3^{-1}a_4a_5^{-1}$ is the arrow sequence of $\alpha$.       
Then the corresponding arc module $S(\alpha)$ is defined as follows 
$$\xymatrix@C5pt@R5pt{
  v_1\ar[rd]&&&&&&&\\
  &v_2\ar[rd]&&v_4\ar[rd]\ar[ld]&&v_6\ar[ld]&\\
  &&v_3&&v_5&&\\
  &&&&&&
  }$$
where $v_i$ shows a $K$-vector space $K$ lying on the vertex $v_i$, and each arrow is the identity map. 
\end{exam}

\begin{remk}
We can also interpret the above construction in terms of  surface models as follows. 
Let $S$ be a disk with $n+1$ punctures
with a set of distinguished points $1,2,3,\cdots,n+1$ from left to right in the interior of $S$. 
Assume that the boundary has counterclockwise orientation.
  
\label{fig:be2}
\[\begin{tikzpicture}
      [ mycell/.style={draw, minimum size=1em},
        dot/.style={mycell,
            append after command={\pgfextra \fill (\tikzlastnode) circle[radius=.2em]; \endpgfextra}}]

      \begin{scope}[every node/.style={circle, fill=black, inner sep=.5mm, outer sep=0}]
        \node[right=5mm of 1] (2) {};
        \node[right=5mm of 2] (3) {};
        \node[right=5mm of 3] (4) {};
        \node[right=5mm of 4] (5) {};
        \node[right=5mm of 5] (6) {};
        \node[right=5mm of 6] (7) {};
        \node[right=5mm of 7] (8) {};
        \node[right=5mm of 8] (9) {};
      \end{scope}
  \draw[] (3,0) ellipse (3 and 0.7);
      \begin{scope}[every path/.style={dashed}, every node/.style={font=\footnotesize}]
        \draw ($(2)!0.5!(3) + (0,7mm)$) node[above] {$v_1$} -- +(0,-14mm);
        \draw ($(3)!0.5!(4) + (0,7mm)$) node[above] {$v_2$} -- +(0,-14mm);
        \draw ($(4)!0.5!(5) + (0,7mm)$) node[above] {$v_3$} -- +(0,-14mm);
        \draw ($(5)!0.5!(6) + (0,7mm)$) node[above] {$v_4$} -- +(0,-14mm);
        \draw ($(6)!0.5!(7) + (0,7mm)$) node[above] {$v_5$} -- +(0,-14mm);
        \draw ($(7)!0.5!(8) + (0,7mm)$) node[above] {$v_6$} -- +(0,-14mm);
        \draw ($(8)!0.5!(9) + (0,7mm)$) node[above] {$v_7$} -- +(0,-14mm);
      \end{scope}
      \begin{scope}
        [thick, rounded corners=8pt]
        \draw
        (2)-- ($(3) - (0,5mm)$) -- ($(4) - (0,5mm)$) -- ($(5) + (0,5mm)$) -- 
        ($(6) - (0,5mm)$) -- ($(7) + (0,5mm)$) -- (8) ;
      \end{scope} 
      \end{tikzpicture}\]
Then each segment ${}_k|\alpha|_{k+1}$ is isotopy to the boundary of $S$ and can be oriented uniquely as same as the boundary of $S$. This orientation determines an arrow sequence of given arc. 
\end{remk}

From now on, for given arcs $\alpha$ and $\beta$, we will study hom-space 
$\Hom_\Pi(S(\alpha),S(\beta))$. 
Since arc modules are nothing but representations of some path algebra of type $A_n$, it is not difficult to calculate it directly.  
Here we will formulate its calculation using a similar technique of gentle algebras by \cite{CB1,S,BDMTY}.

For an arc $\alpha$, consider a  factorization of the arrow sequence. 
We denote the set of all factorizations of the arrow sequence $\alpha=a_i^{\epsilon_i}a_{i+1}^{\epsilon_{i+1}}\cdots a_{j-1}^{\epsilon_{j-1}}$ by 
$$\PP(\alpha) :=\{(b,c,d)\ |\ a_i^{\epsilon_i}a_{i+1}^{\epsilon_{i+1}}\cdots a_{j-1}^{\epsilon_{j-1}}=bcd \}.$$

We call a triple $(b,c,d)\in \PP(\alpha)$ a \emph{quotient factorization} of $\alpha$ if the following conditions are satisfied 
\begin{itemize}
\item $b$ is empty or $b=b'a_s^{-}$ for some $i\leq s\leq j-1$.
\item $d$ is empty  or $d=a_td'$ for some $i\leq t\leq j-1$.  
\end{itemize}

It is easy to see that a quotient factorization $(b,c,d)\in \PP(\alpha)$ induces a surjective map $\xymatrix@C15pt@R5pt{S(\alpha) \ar@{->>}[r]& S(c). }$
We denote by $\FF(\alpha)$ the set of all quotient factorizations of $\alpha$.

Dually, we can define a \emph{submodule factorization} of $\alpha$ and denote by $\sS(\alpha)$ the set of all submodule factorizations of $\alpha$. 
Similarly, we have an injective map
 $\xymatrix@C15pt@R5pt{S(c) \ar@{^{(}->}[r]& S(\alpha) }$
for a submodule factorization $(b,c,d)\in \sS(\alpha)$. 

Let $(b,c,d)\in\FF(\alpha)$ and $(b',c,d')\in\sS(\beta)$. 
For a pair $T:=((b,c,d),(b',c,d'))$,
we have a natural homomorphism 
$$f_T:S(\alpha)\to S(\beta),$$ 
defined as the composition 

\[
\xymatrix{
S(\alpha) \ar@{->>}[r]
& S(c) \ar@{^{(}->}[r] & S(\beta).
}
\]
We call these homomorphisms $f_T$ \emph{graph maps} following \cite{BDMTY}.

Note that a preprojective algebra is not necessary a gentle (string) algebra in general, 
but we can easily check that the similar result works for arc modules since $S(\alpha)\in\mod\Pi/I_{\textnormal{cyc}}$, where $I_{\textnormal{cyc}}$ is the ideal generated by all 2-cycles and $\Pi/I_{\textnormal{cyc}}$ is gentle. The following result is due to \cite{CB1}.

\begin{thm}\cite{CB1}\label{CWthm} 
The set of 
graph maps from $S(\alpha)$ to $S(\beta)$
is a basis for $\Hom_\Pi(S(\alpha),S(\beta))$. 
\end{thm}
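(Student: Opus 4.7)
The plan is to follow the classical strategy of Crawley-Boevey \cite{CB1} for string algebras, adapted to the present setting. Although $\Pi$ is not a string algebra, each arc module $S(\alpha)$ is supported on a linear subquiver $Q_\alpha$ of $Q$ with one-dimensional spaces and identity maps along arrows, so the preprojective relations act as zero on every arc module, and the computation of $\Hom_\Pi(S(\alpha),S(\beta))$ reduces to the combinatorics of the arrow sequences of $\alpha$ and $\beta$.

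The first and main step is to set up a dictionary between subrepresentations of $S(\alpha)$ and factorizations of its arrow sequence. A subrepresentation is determined by a subset of the vertices of $Q_\alpha$ that is closed under following the arrows of $Q_\alpha$; decomposing such a subset into connected segments and tracking the arrow direction at each boundary, one finds that the indecomposable submodules of $S(\alpha)$ are exactly the arc modules $S(c)$ with $(b,c,d)\in\sS(\alpha)$ (so $b$ is empty or ends in some $a_i$, and $d$ is empty or begins with some $a_j^{-}$), and dually the indecomposable quotients are the $S(c)$ with $(b,c,d)\in\FF(\alpha)$. Thus $\FF(\alpha)$ and $\sS(\beta)$ parametrize the indecomposable quotients of $S(\alpha)$ and indecomposable submodules of $S(\beta)$, respectively.

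With this dictionary in hand, spanning becomes transparent: given $f\in\Hom_\Pi(S(\alpha),S(\beta))$, the image $\Im f\subset S(\beta)$ is simultaneously a quotient of $S(\alpha)$ and a submodule of $S(\beta)$, so applying the dictionary componentwise we write $\Im f=\bigoplus_i S(c_i)$ and obtain pairs $T_i=((b_i,c_i,d_i),(b_i',c_i,d_i'))$ with $f=\sum_i\lambda_i f_{T_i}$ for suitable scalars. For linear independence, consider a relation $\sum_T\mu_T f_T=0$ and evaluate at the basis vectors of $S(\alpha)$ sitting at each vertex: the graph map $f_T$ sends the basis vector at a vertex of $c$ in $S(\alpha)$ to the corresponding basis vector of the submodule of $S(\beta)$ determined by $(b',c,d')$, and vanishes elsewhere, so distinct triples $T$ produce linearly independent outputs and all $\mu_T$ must vanish.

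The principal obstacle lies in Step~1: the precise verification that the arrow-closure property forces the boundary words $b$ and $d$ to satisfy exactly the conditions defining $\sS(\alpha)$ and $\FF(\alpha)$, together with the fact that every indecomposable subquotient of an arc module is again an arc module for such a sub-string. Once this combinatorial backbone is in place, Steps~2 and~3 are essentially formal. An alternative, shorter route is to observe that the full subcategory of $\mod\Pi$ on arc modules is equivalent to the category of string modules over an appropriate linear type-$A$ gentle algebra, and then to quote \cite{CB1} as a black box.
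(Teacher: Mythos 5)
Your proposal is correct and follows exactly the route the paper intends: the paper gives no proof of this statement, simply citing \cite{CB1} with the remark that the Crawley--Boevey argument carries over to arc modules (which are thin representations supported on a linear subquiver, so the preprojective relations vanish on them), and your sketch fleshes out precisely that adaptation, including the alternative of passing through a gentle algebra of type $A$. The only point stated too loosely is linear independence: since distinct graph maps can have overlapping common substrings $c$, one needs the standard ordering/triangularity argument (evaluate at a vertex belonging to a minimal or extremal interval first) rather than the bare claim that distinct triples give independent outputs, but this is routine and is exactly how \cite{CB1} concludes.
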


From this theorem, we can formulate  $\Hom_\Pi(S(\alpha),S(\beta))$ 
by a simple combinatorics of $\alpha$ and $\beta$.

We start with the following easy lemma about arc modules.

\begin{lemm}\label{S is brick}
For an arc $\alpha$, $S(\alpha)$ is a brick (i.e. arc modules are brick).
\end{lemm}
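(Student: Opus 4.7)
The plan is to apply Theorem \ref{CWthm} directly and enumerate the graph maps $S(\alpha)\to S(\alpha)$. By that theorem, such maps are in bijection with pairs $((b_1,c,d_1),(b_2,c,d_2))$ where $(b_1,c,d_1)\in\FF(\alpha)$ is a quotient factorization, $(b_2,c,d_2)\in\sS(\alpha)$ is a submodule factorization, and the middle pieces agree. The identity endomorphism corresponds to $c=\alpha$ with both flanks empty, so the task reduces to showing no other pair can occur.

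The key geometric input is that an arc $\alpha$ from a point $p$ to a strictly higher point $q$ crosses each vertical dashed line $v_k$ at most once, and so the arrow sequence $a_{i_1}^{\epsilon_1}\cdots a_{i_m}^{\epsilon_m}$ of $\alpha$ visits each vertex of $Q$ exactly once. Consequently any sub-walk $c$ appears as a contiguous substring of the arrow sequence at a unique position. In particular, once $c$ is fixed, the prefix and suffix are determined: I must have $b_1=b_2$ and $d_1=d_2$ as literal substrings of $\alpha$.

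Now the contradiction is immediate. Dualizing the quotient factorization condition, a submodule factorization $(b_2,c,d_2)$ requires $b_2$ to be empty or to end in an arrow $a_i$ of positive direction, and $d_2$ to be empty or to begin with some $a_j^{-}$. If $b_1=b_2$ were nonempty, it would have to end both in an arrow of the form $a^{-}$ (from the quotient factorization condition) and in one of the form $a^{+}$ (from the submodule factorization condition), which is absurd. Hence $b_1=b_2$ is empty, and symmetrically $d_1=d_2$ is empty, forcing $c=\alpha$. So only one graph map survives, namely the identity, giving $\dim_K\End_\Pi(S(\alpha))=1$ and the brick property.

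The only point demanding care is the uniqueness-of-position statement for sub-walks of $\alpha$, which follows from the monotonicity requirement in Definition \ref{def nad}(1); it would also be natural to spell out the dual submodule factorization condition explicitly, since the paper only says ``dually''. Beyond these bookkeeping points I do not anticipate a real obstacle: the argument is essentially the standard string-algebra reasoning of \cite{CB1} applied to arc walks, and the collapse to a one-line contradiction is the expected behavior.
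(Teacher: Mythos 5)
Your argument is correct and follows the same route the paper takes: the paper's proof of Lemma \ref{S is brick} simply invokes Theorem \ref{CWthm} (with an alternative direct check that $S(\alpha)$ is an indecomposable $KQ_\alpha$-module with dimension vector all ones), and your enumeration of graph maps — using that the arrow sequence of an arc visits each vertex exactly once, so the middle term pins down the factorization, and the quotient/submodule end conditions then force $b$ and $d$ to be empty — is exactly the detail the paper leaves implicit.
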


\begin{proof}
This follows from Theorem \ref{CWthm} (or it is easy to check it directly since $S(\alpha)$ is an indecomposable $KQ_\alpha$-module whose entries of the dimension vector consists of 1). 
\end{proof}

Next we will show that any brick is an arc module. For this purpose, we recall some basic result of preprojective algebras, which comes from 2-Calabi-Yau property.

Let $(-,-)$ be the symmetric bilinear form on $\mathbb{Z}^n$ defined by 
$$(\mathbf{x},\mathbf{y}) :=\sum_{i\in Q_0}2x_iy_i-\sum_{a:i\to j\in Q_1} x_iy_j,$$

where $\mathbf{x}=(x_1,\ldots,x_n)$ and $\mathbf{y}=(y_1,\ldots,y_n)$. 
In particular, 
we have the associated quadratic form 
$$q(\mathbf{x}):=(\mathbf{x},\mathbf{x})=x_1^2 +(x_1-x_2)^2+(x_2-x_3)^2+\ldots+(x_{n-1}-x_n)^2 +x_n^2.$$


\begin{prop}\cite{CB2}\label{cw}
Let  $\underline{\dim}X$ (resp. $\underline{\dim}Y$)  
be the dimension vector of $X\in\mod\Pi$ (resp. $Y\in\mod\Pi$). 
Then we have $$(\underline{\dim}X,\underline{\dim}Y)=
\dim\Hom_\Pi(X,Y)+\dim\Hom_\Pi(Y,X)-\dim\Ext_\Pi^1(X,Y).$$
\end{prop}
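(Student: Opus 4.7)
The plan is to follow Crawley-Boevey's classical approach, which establishes the identity by a single Euler-characteristic count on a three-term complex of finite-dimensional $K$-vector spaces functorially built from the pair $(X, Y)$. The starting point is the natural complex of projective $\Pi$-bimodules
\begin{equation*}
\bigoplus_{i \in Q_0} \Pi e_i \otimes_K e_i \Pi \;\longrightarrow\; \bigoplus_{a \in Q_1} \Pi e_{t(a)} \otimes_K e_{s(a)} \Pi \;\longrightarrow\; \bigoplus_{i \in Q_0} \Pi e_i \otimes_K e_i \Pi \;\longrightarrow\; \Pi \;\longrightarrow\; 0,
\end{equation*}
whose second differential is built from the arrows of the double quiver $Q$ and whose first differential encodes the preprojective relation $\sum_{a}(aa^{-} - a^{-}a) = 0$. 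Tensoring with $X$ over $\Pi$, applying $\Hom_\Pi(-, Y)$, and using the adjunction $\Hom_\Pi(\Pi e_i \otimes_K V, Y) \cong \Hom_K(V, e_i Y)$ produces a three-term complex of $K$-spaces
\begin{equation*}
\bigoplus_{i} \Hom_K(e_i X, e_i Y) \xrightarrow{\;\delta^0\;} \bigoplus_{a \in Q_1} \Hom_K(e_{s(a)} X, e_{t(a)} Y) \xrightarrow{\;\delta^1\;} \bigoplus_{i} \Hom_K(e_i X, e_i Y).
\end{equation*}

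Next, I would identify the three cohomology groups of this complex. Directly from the definitions, $\Ker \delta^0$ coincides with $\Hom_\Pi(X, Y)$ (a family $(\varphi_i)_i$ commuting with every arrow of $Q$ is exactly a $\Pi$-homomorphism), and the middle cohomology $\Ker \delta^1 / \Image \delta^0$ is $\Ext^1_\Pi(X, Y)$. The key point is to establish a canonical isomorphism $\Coker \delta^1 \cong \Hom_\Pi(Y, X)^*$; this uses the involution $a \leftrightarrow a^{-}$ on $Q_1$ together with a trace-type pairing on the idempotent components, and is the feature that renders the resulting formula symmetric in $X$ and $Y$. With all three identifications in place, equating the Euler characteristic of the complex computed from its terms with the alternating sum of cohomology dimensions yields
\begin{equation*}
2\sum_{i \in Q_0} \dim(e_i X)\dim(e_i Y) - \sum_{a \in Q_1} \dim(e_{s(a)} X)\dim(e_{t(a)} Y) = \dim\Hom_\Pi(X, Y) - \dim\Ext^1_\Pi(X, Y) + \dim\Hom_\Pi(Y, X),
\end{equation*}
whose left-hand side is exactly $(\underline{\dim} X, \underline{\dim} Y)$ by the definition of the bilinear form $(-,-)$.

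The main obstacle is the identification $\Coker \delta^1 \cong \Hom_\Pi(Y, X)^*$, which is a genuine duality statement. In the non-Dynkin setting it is a manifestation of the $2$-Calabi-Yau property of $\Pi$; for the Dynkin type $\type{A}_n$ considered here, $\Pi$ is finite-dimensional self-injective, and one can either adapt Crawley-Boevey's direct pairing argument exploiting the double-quiver involution, or reduce to the non-Dynkin situation by a completion trick. Once the cokernel identification is in place, the rest of the proof is the dimension bookkeeping described above.
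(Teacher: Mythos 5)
The paper offers no proof of this proposition at all: it is quoted verbatim from Crawley-Boevey \cite{CB2}, so there is nothing internal to compare against. Your sketch is essentially a faithful reconstruction of the argument in that reference (and in Crawley-Boevey--Holland): truncate the standard bimodule complex of $\Pi$, tensor with $X$, apply $\Hom_\Pi(-,Y)$, identify the three cohomologies as $\Hom_\Pi(X,Y)$, $\Ext^1_\Pi(X,Y)$ and $\Hom_\Pi(Y,X)^*$, and take Euler characteristics; the bookkeeping against the form $(\mathbf{x},\mathbf{y})=\sum_i 2x_iy_i-\sum_{a:i\to j}x_iy_j$ is right, since the arrow sum runs over the doubled quiver. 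Two points deserve to be made explicit rather than implicit. First, identifying the middle cohomology with $\Ext^1_\Pi(X,Y)$ requires knowing that the tensored complex $P_2\to P_1\to P_0\to X\to 0$ is exact at $P_1$, i.e.\ that it is the start of a projective resolution; this is not automatic from right-exactness of the tensor product and is itself a lemma of Crawley-Boevey--Holland (it holds for all quivers, even though in Dynkin type the full bimodule complex is \emph{not} a resolution because the leftmost map fails to be injective -- so one must only use the truncation you wrote). Second, the duality $\Coker\delta^1\cong\Hom_\Pi(Y,X)^*$ is, as you say, the real content; the trace-pairing argument via the involution $a\leftrightarrow a^{-}$ is exactly how \cite{CB2} does it. With those two steps filled in, the proof is complete and is the intended one.
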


Using this result, we determine all bricks in terms of arcs.

\begin{prop}\label{arc-brick bij}
\begin{itemize}
\item[(1)] 
The map 
$$S:\{\textnormal{arcs}\}\to\brick\Pi$$ is bijection. 
\item[(2)] 
Any brick does not admit non-trivial self-extension.
\item[(3)] 
We have 
$$\sbrick\Pi=\fLsbrick\Pi=\fRsbrick\Pi.$$
\end{itemize}
\end{prop}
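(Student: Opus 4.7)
The plan is to combine the Crawley-Boevey formula (Proposition \ref{cw}) with positive definiteness of the quadratic form $q$ in Dynkin type $\type{A}_n$. For a brick $X$ one has $\dim\Hom_\Pi(X,X)=1$, so Proposition \ref{cw} specializes to
\[
q(\dd X)=2-\dim\Ext_\Pi^1(X,X).
\]
Since $\type{A}_n$ is Dynkin, $q$ is positive definite on $\mathbb{R}^n$, and on nonzero integer vectors it attains its minimum value $2$ precisely on the positive roots of $\type{A}_n$, i.e.\ on the $0/1$-vectors whose support is a contiguous interval of vertices $\{v_i,\dots,v_{j-1}\}$. Consequently $q(\dd X)=2$ and $\Ext_\Pi^1(X,X)=0$, which establishes (2) at once and forces $\dd X$ to be such a positive root.

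For the surjectivity in (1), let $X$ be a brick with $\dd X$ supported on $\{v_i,\dots,v_{j-1}\}$, all entries equal to $1$. Choose bases so that each arrow $a_k:v_k\to v_{k+1}$ and $a_k^{-}:v_{k+1}\to v_k$ (for $i\le k\le j-2$) acts on $X$ as multiplication by a scalar $\alpha_k$ and $\beta_k$ respectively. Localizing the preprojective relation $a_k^{-}a_k=a_{k-1}a_{k-1}^{-}$ at each vertex to $X$, the interior relation at $v_k$ ($i<k<j-1$) reads $\beta_k\alpha_k=\alpha_{k-1}\beta_{k-1}$, whereas the boundary relations at $v_i$ and $v_{j-1}$ force $\beta_i\alpha_i=0$ and $\alpha_{j-2}\beta_{j-2}=0$ (the other product vanishing because the adjacent dimension is zero). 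Propagating along $k$, $\alpha_k\beta_k=0$ for every $k$. If both $\alpha_k$ and $\beta_k$ vanished for some $k$, then $X$ would split as a direct sum across that gap, contradicting indecomposability of the brick $X$. Hence exactly one of $\{\alpha_k,\beta_k\}$ is nonzero, and a basis rescaling normalizes it to $1$; the resulting sign datum $(\epsilon_i,\dots,\epsilon_{j-2})\in\{+,-\}^{j-1-i}$ assembles into an arrow sequence, hence an arc $\alpha$ with $S(\alpha)\cong X$. Injectivity of $S$ follows because the arrow sequence of $\alpha$ can be read off from $S(\alpha)$: on the support of $\dd S(\alpha)$ the acting scalar between $v_k$ and $v_{k+1}$ equals $1$ for exactly one of $a_k,a_k^{-}$ and equals $0$ for the other, and this choice recovers $\epsilon_k$.

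Part (3) is then formal: the set of arcs on $n+1$ points is visibly finite, so by (1) $\brick\Pi$ is finite, and Theorem \ref{finite brick} yields $\sbrick\Pi=\fLsbrick\Pi=\fRsbrick\Pi$. The main substantive step is thus the localized analysis of the preprojective relations at the boundary vertices; this is precisely what rules out modules on a contiguous support with both $\alpha_k$ and $\beta_k$ nonzero, thereby explaining why bricks of $\Pi$ are exactly arc modules rather than more general ``string-like'' modules in the double quiver.
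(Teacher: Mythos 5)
Your proof is correct and follows essentially the same route as the paper: use the Crawley--Boevey formula together with the (even, positive definite) quadratic form to force $q(\dd X)=2$, deduce $\Ext^1_\Pi(X,X)=0$ and that $\dd X$ is a $0/1$ interval-supported root, and then get (3) from finiteness via Theorem \ref{finite brick}. Your explicit analysis of the preprojective relations at the boundary and interior vertices (showing $\alpha_k\beta_k=0$ everywhere and that indecomposability forces exactly one of each pair to be nonzero) is a welcome elaboration of the step the paper only asserts, namely that a brick with such a dimension vector is necessarily an arc module.
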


\begin{proof}
By Lemma \ref{S is brick}, the map is well-defined and clearly injective. 
We will show the surjectivity. 
Let $X\neq0\in\mod\Pi$ be a  brick, which is clearly indecomposable, and 
$\underline{\dim}X=(x_1,\ldots,x_n)$ the dimension vector of $X$. 
Then, Proposition \ref{cw} 
implies $$q(\underline{\dim}X) \leq 2\dim\Hom_\Pi(X,X)=2.$$

Therefore we have 
$$x_1^2+(x_1-x_2)^2+\cdots+(x_{n-1}-x_n)^2+x_n^2\leq2.$$

We can easily check that neither $q(\underline{\dim}X)=0$ nor  $q(\underline{\dim}X)=1$ occur, and hence we have $q(\underline{\dim}X)=2$. 
Thus we get $\Ext^1_\Pi(X,X)=0$ and (2) follows.  
Moreover, by $x_1^2+(x_1-x_2)^2+\cdots+(x_{n-1}-x_n)^2+x_n^2=2$, 
$x_1$ and $x_n$ are both 0 or 1. 
Then it is easy to check that $x_i$ is also 0 or 1 for any $2\leq i\leq n-1$ and hence $\underline{\dim}X$ has entries only 0 or 1. 
Since $X$ is indecomposable, any non-zero entry is adjacent to each other, that is, $\underline{\dim}X$ has the form 
$(0,\ldots,0,1,\ldots,1,0,\ldots,0)$ possibly without zeroes. 
Therefore it is isomorphic to $S(\alpha)$ for some arc $\alpha$. 
Finally (3) follows from (1) and  Theorem \ref{finite brick}.
\end{proof}

\begin{remk}
A classification of $\sbrick\Pi$ is also formulated in \cite{A2,DIRRT} in a different way. 
\end{remk}

\subsection{Hom-spaces of arc modules}

In this subsection, we study hom-spaces of arc modules in terms of combinatorics of arcs. Moreover, we establish a bijection between the set of noncrossing arc diagrams and the set of semibricks.

\begin{lemm}\label{hom lemm}
Let $\alpha$ and $\beta$ be arcs. Assume that $\alpha$ and $\beta$ do not intersect.
\begin{itemize} 
\item[(1)] Assume that $\alpha$ and $\beta$ share the same left endpoint but not the right endpoint.
Then we have $\dim\Hom_\Pi(S(\alpha),S(\beta))=0$ or $\dim\Hom_\Pi(S(\alpha),S(\beta))=1$. 
Moreover, $\dim\Hom_\Pi(S(\alpha),S(\beta))=0$ (resp. $\dim\Hom_\Pi(S(\alpha),S(\beta))=1$) if and only if $\dim\Hom_\Pi(S(\beta),S(\alpha))=1$ (resp. $\dim\Hom_\Pi(S(\beta),S(\alpha))=0$).

\item[(2)]
Assume that $\alpha$ and $\beta$ share the same right endpoint but not the left endpoint.
Then we have $\dim\Hom_\Pi(S(\alpha),S(\beta))=0$ or $\dim\Hom_\Pi(S(\alpha),S(\beta))=1$. 
Moreover, $\dim\Hom_\Pi(S(\alpha),S(\beta))=0$ (resp. $\dim\Hom_\Pi(S(\alpha),S(\beta))=1$) if and only if $\dim\Hom_\Pi(S(\beta),S(\alpha))=1$ (resp. $\dim\Hom_\Pi(S(\beta),S(\alpha))=0$). 
\item[(3)] Assume that the right endpoint of $\alpha$ coincides with 
the left endpoint of $\beta$, or left endpoint of $\alpha$ coincides with 
the right endpoint of $\beta$. 
Then we have 
$$\dim\Hom_\Pi(S(\alpha),S(\beta))=0=\dim\Hom_\Pi(S(\beta),S(\alpha)).$$

\item[(4)] Assume that $\alpha$ and $\beta$ share both the right endpoint and  the left endpoint. 
Then we have $$\dim\Hom_\Pi(S(\alpha),S(\beta))\neq 0 \ \textnormal{and}\ 
\dim\Hom_\Pi(S(\alpha),S(\beta))\neq 0.$$ 
\end{itemize}
\end{lemm}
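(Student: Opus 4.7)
The plan is to apply Theorem~\ref{CWthm}: a basis of $\Hom_\Pi(S(\alpha),S(\beta))$ is given by graph maps indexed by pairs $T=((b,c,d),(b',c,d'))$, where $(b,c,d)\in\FF(\alpha)$ is a quotient factorization of $\alpha$ and $(b',c,d')\in\sS(\beta)$ is a submodule factorization of $\beta$ sharing a common middle piece $c$. Everything reduces to enumerating, for each case of the lemma, the common substrings $c$ of the two arrow sequences that admit compatible boundary conditions, and tracking the direction of the resulting graph maps. I first translate the boundary conditions into a \emph{direction dictionary}: a quotient factorization of $\alpha$ requires the arrow of $\alpha$ immediately to the left (resp.\ right) of $c$, when it exists, to be inverse $a^-$ (resp.\ direct $a^+$); a submodule factorization of $\beta$ requires the opposite. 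Hence at each boundary of $c$ where both $\alpha$ and $\beta$ have adjacent arrows, the arrow of $\alpha$ and the arrow of $\beta$ must point in opposite directions for the graph map to be valid.

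The crux is a non-crossing/opposition principle: if $\alpha,\beta$ are non-crossing arcs and $c$ is a common substring at whose boundaries the arrows of $\alpha$ and $\beta$ are opposite, and if both arcs extend past both boundaries of $c$, then tracking the vertical order of $\alpha$ and $\beta$ (which non-crossing preserves) forces the two arcs to swap order across $c$, producing a crossing and contradicting the hypothesis. Consequently, any $c$ giving a valid graph map must have at least one boundary coinciding with an endpoint of one of the arcs, at which the corresponding factorization's boundary condition becomes trivial.

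The case analysis then proceeds directly. For (1) and (2), the shared endpoint provides one trivial boundary, and at the other boundary (the first position of divergence, or the other endpoint of the shorter arc if one sequence is a prefix of the other) the arrows of $\alpha$ and $\beta$ are opposite by non-crossing at the point of divergence, so exactly one of $\alpha\to\beta$ or $\beta\to\alpha$ admits a graph map via $c=$ the maximal common prefix (resp.\ suffix); shorter substrings fail the direction dictionary, since the arrows agree strictly inside the common prefix. For (3), with no shared endpoints, every candidate $c$ has both boundaries interior to both arcs, so the opposition principle produces a crossing and hence $\Hom=0$ in both directions. For (4), if $\alpha=\beta$ the identity is a nonzero hom both ways; otherwise the two arcs bound a bigon with a well-defined ``upper'' and ``lower'' side, and both the maximal common prefix $c_1$ and the maximal common suffix $c_2$ possess the appropriate trivial boundary at a shared endpoint, while the opposition at the other boundary shows that $c_1$ yields a graph map in one direction and $c_2$ in the other, giving nonzero hom both ways. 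The main obstacle is making the opposition principle watertight, in particular handling borderline configurations in (3) where a candidate $c$'s boundary coincides with an endpoint of only one arc; this requires a careful local analysis of how each arc transitions between lying above and below an intermediate point near the horizontal line.
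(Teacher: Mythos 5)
Your proposal rests on the same foundation as the paper's proof --- Theorem~\ref{CWthm} reduces everything to enumerating common substrings $c$ of the two arrow sequences satisfying the quotient/submodule boundary conditions --- and your ``direction dictionary'' is exactly the correct translation of those conditions (the last arrow of $b$ must be some $a^-$, the first arrow of $d$ some $a^+$, and dually for $\sS(\beta)$, which in arc language says $\alpha$ passes above and $\beta$ below the point at the left boundary of $c$, and the reverse at the right boundary). Where you differ from the paper is in organization: the paper proves (1) by drawing the three possible local pictures at the point of divergence and reading off the unique graph map, then dismisses (2)--(4) as analogous, whereas your ``opposition principle'' turns the boundary conditions into constraints on the vertical order of the two arcs over the support of $c$ (left boundary forces $\alpha$ above $\beta$, right boundary forces $\alpha$ below $\beta$, each constraint being vacuous only at a boundary where \emph{both} arcs terminate, i.e.\ at a shared endpoint). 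This is a genuinely cleaner way to establish completeness of the enumeration: it uniformly rules out all candidate substrings other than the maximal common prefix/suffix anchored at a shared endpoint --- including substrings occurring after the divergence and the ``borderline'' substrings in (3) whose boundary is an endpoint of only one arc, which you rightly flag and which do succumb to the same vertical-order argument --- and it yields (3) and (4) with full detail rather than by analogy. The trade-off is that the paper's picture-based argument is more immediately visual for (1), while yours buys a watertight and case-uniform proof of all four parts; I see no gap in your plan.
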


\begin{proof}
We will show (1); (2) is similar.

We assume that $\alpha$ and $\beta$ share the left endpoint $i$. 
Since $\alpha$ and $\beta$ does not intersect, 
there exists $i<j$ such that $\alpha$ and $\beta$ are isotopy 
between the interval $v_i$ and $v_{j-1}$ 
satisfying one of the following situations 
\[\begin{tikzpicture}
      [ mycell/.style={draw, minimum size=1em},
        dot/.style={mycell,
            append after command={\pgfextra \fill (\tikzlastnode) circle[radius=.2em]; \endpgfextra}}]
      \begin{scope}[every node/.style={circle, fill=black, inner sep=.5mm, outer sep=0}]
        \node[right=5mm of 1] (2) {};
      \end{scope}

      \begin{scope}[every path/.style={dashed}, every node/.style={font=\footnotesize}]
        \draw ($(1)!0.5!(2) + (0,7mm)$) node[above] {$v_{j-1}$} -- +(0,-14mm);
        \draw ($(2)!0.5!(3) + (0,7mm)$) node[above] {$v_j$} -- +(0,-14mm);
      
      \end{scope}
      \begin{scope}
        [thick, rounded corners=8pt]
    \draw   ($(1) + (0,2mm)$)-- ($(2) + (0,4mm)$) --($(3)+ (0,3mm)$);
    \draw   ($(1)+ (0,1mm)$)--($(2)$);
      \end{scope} 
      \end{tikzpicture}
  \quad \mathrm{or} \quad  
  \begin{tikzpicture}
      [ mycell/.style={draw, minimum size=1em},
        dot/.style={mycell,
            append after command={\pgfextra \fill (\tikzlastnode) circle[radius=.2em]; \endpgfextra}}]

      \begin{scope}[every node/.style={circle, fill=black, inner sep=.5mm, outer sep=0}]
        \node[right=5mm of 1] (2) {};
      \end{scope}

      \begin{scope}[every path/.style={dashed}, every node/.style={font=\footnotesize}]
        \draw ($(1)!0.5!(2) + (0,7mm)$) node[above] {$v_{j-1}$} -- +(0,-14mm);
        \draw ($(2)!0.5!(3) + (0,7mm)$) node[above] {$v_j$} -- +(0,-14mm);
      
      \end{scope}
      \begin{scope}
        [thick, rounded corners=8pt]
    \draw   ($(1) - (0,2mm)$)-- ($(2) - (0,4mm)$) --($(3)- (0,3mm)$);
    \draw   ($(1)- (0,1mm)$)--($(2)$);
      \end{scope} 
      \end{tikzpicture}
  \quad  \mathrm{or} \quad      
      \begin{tikzpicture}
      [ mycell/.style={draw, minimum size=1em},
        dot/.style={mycell,
            append after command={\pgfextra \fill (\tikzlastnode) circle[radius=.2em]; \endpgfextra}}]

      \begin{scope}[every node/.style={circle, fill=black, inner sep=.5mm, outer sep=0}]
        \node[right=5mm of 1] (2) {};
      \end{scope}

      \begin{scope}[every path/.style={dashed}, every node/.style={font=\footnotesize}]
        \draw ($(1)!0.5!(2) + (0,7mm)$) node[above] {$v_{j-1}$} -- +(0,-14mm);
        \draw ($(2)!0.5!(3) + (0,7mm)$) node[above] {$v_j$} -- +(0,-14mm);
      
      \end{scope}
      \begin{scope}
        [thick, rounded corners=8pt]
  \draw      
    ($(1) - (0,2mm)$)-- ($(2) - (0,5mm)$) --($(3)- (0,2mm)$);
    \draw        ($(1) + (0,2mm)$)-- ($(2) + (0,3mm)$) --($(3)+ (0,2mm)$);
      \end{scope} 
      \end{tikzpicture}\]
      
{\bf (Case 1).} 
Consider the following case. 

\[\begin{tikzpicture}
      [ mycell/.style={draw, minimum size=1em},
        dot/.style={mycell,
            append after command={\pgfextra \fill (\tikzlastnode) circle[radius=.2em]; \endpgfextra}}]
      \begin{scope}[every node/.style={circle, fill=black, inner sep=.5mm, outer sep=0}]
        \node[right=5mm of 1] (2) {};
      \end{scope}

      \begin{scope}[every path/.style={dashed}, every node/.style={font=\footnotesize}]
        \draw ($(1)!0.5!(2) + (0,7mm)$) node[above] {$v_{j-1}$} -- +(0,-14mm);
        \draw ($(2)!0.5!(3) + (0,7mm)$) node[above] {$v_j$} -- +(0,-14mm);
      
      \end{scope}
      \begin{scope}
        [thick, rounded corners=8pt]
   \coordinate (a) at (0.66666,0.8) node at (a) [below=0] {$\alpha$};
    \coordinate (b) at (0.1,0) node at (b) [below=0] {$\beta$};
    \draw   ($(1) + (0,2mm)$)-- ($(2) + (0,4mm)$) --($(3)+ (0,3mm)$);
    \draw   ($(1)+ (0,1mm)$)--($(2)$);
      \end{scope} 
      \end{tikzpicture}\]
Then, by Theorem \ref{CWthm}, we have an inclusion $S(\beta)\to S(\alpha)$ and there is no graph map from $S(\alpha)$ to $S(\beta)$. 
Thus 
we have $\dim\Hom_\Pi(S(\alpha),S(\beta))=0$ and $\dim\Hom_\Pi(S(\beta),S(\alpha))=1$. 
Clearly, if we replace $\alpha$ and $\beta$, then we have 
$\dim\Hom_\Pi(S(\beta),S(\alpha))=0$ and $\dim\Hom_\Pi(S(\alpha),S(\beta))=1$.

{\bf (Case 2).} 
Consider the following case.
\[\begin{tikzpicture}
      [ mycell/.style={draw, minimum size=1em},
        dot/.style={mycell,
            append after command={\pgfextra \fill (\tikzlastnode) circle[radius=.2em]; \endpgfextra}}]

      \begin{scope}[every node/.style={circle, fill=black, inner sep=.5mm, outer sep=0}]
        \node[right=5mm of 1] (2) {};
      \end{scope}

      \begin{scope}[every path/.style={dashed}, every node/.style={font=\footnotesize}]
        \draw ($(1)!0.5!(2) + (0,7mm)$) node[above] {$v_{j-1}$} -- +(0,-14mm);
        \draw ($(2)!0.5!(3) + (0,7mm)$) node[above] {$v_j$} -- +(0,-14mm);
      
      \end{scope}
      \begin{scope}
        [thick, rounded corners=8pt]
      \coordinate (a) at (0.66666,-0.4) node at (a) [below=0] {$\beta$};
    \coordinate (b) at (0.1,0.5) node at (b) [below=0] {$\alpha$};    
    \draw   ($(1) - (0,2mm)$)-- ($(2) - (0,4mm)$) --($(3)- (0,3mm)$);
    \draw   ($(1)- (0,1mm)$)--($(2)$);
      \end{scope} 
      \end{tikzpicture}\]
Then, by Theorem \ref{CWthm}, we have an surjection $S(\beta)\to S(\alpha)$ and there is no graph map from $S(\alpha)$ to $S(\beta)$. 
Thus 
we have $\dim\Hom_\Pi(S(\alpha),S(\beta))=0$ and $\dim\Hom_\Pi(S(\beta),S(\alpha))=1$. 
Clearly, if we replace $\alpha$ and $\beta$, then we have 
$\dim\Hom_\Pi(S(\beta),S(\alpha))=0$ and $\dim\Hom_\Pi(S(\alpha),S(\beta))=1$.

{\bf (Case 3).} 
Consider the following case.
\[\begin{tikzpicture}
      [ mycell/.style={draw, minimum size=1em},
        dot/.style={mycell,
            append after command={\pgfextra \fill (\tikzlastnode) circle[radius=.2em]; \endpgfextra}}]

      \begin{scope}[every node/.style={circle, fill=black, inner sep=.5mm, outer sep=0}]
        \node[right=5mm of 1] (2) {};
      \end{scope}

      \begin{scope}[every path/.style={dashed}, every node/.style={font=\footnotesize}]
        \draw ($(1)!0.5!(2) + (0,7mm)$) node[above] {$v_{j-1}$} -- +(0,-14mm);
        \draw ($(2)!0.5!(3) + (0,7mm)$) node[above] {$v_j$} -- +(0,-14mm);
      
      \end{scope}
      \begin{scope}
        [thick, rounded corners=8pt]
        \coordinate (a) at (0.66666,-0.4) node at (a) [below=0] {$\beta$};
    \coordinate (b) at (0.66666,0.8) node at (b) [below=0] {$\alpha$};      
  \draw      
    ($(1) - (0,2mm)$)-- ($(2) - (0,5mm)$) --($(3)- (0,2mm)$);
    \draw        ($(1) + (0,2mm)$)-- ($(2) + (0,3mm)$) --($(3)+ (0,2mm)$);
      \end{scope} 
      \end{tikzpicture}\]

Then, for the arrow sequence of $\alpha$ and $\beta$,  
we have factorizations $\alpha=ba_{j-1}^{-}d$ and  $\beta=ba_{j-1}d'$. 
Then we have 
$$\xymatrix{
S(\beta) \ar@{->>}[r]
& S(b) \ar@{^{(}->}[r] & S(\alpha).
}$$ 
Moreover, because $\alpha$ and $\beta$ do not share the right endpoint, the right endpoint of $\alpha$ is bigger or smaller than the right endpoint of $\beta$. Therefore, there exists $k>j$ such that

\[\begin{tikzpicture}
      [ mycell/.style={draw, minimum size=1em},
        dot/.style={mycell,
            append after command={\pgfextra \fill (\tikzlastnode) circle[radius=.2em]; \endpgfextra}}]
      \begin{scope}[every node/.style={circle, fill=black, inner sep=.5mm, outer sep=0}]
        \node[right=5mm of 1] (2) {};
      \end{scope}

      \begin{scope}[every path/.style={dashed}, every node/.style={font=\footnotesize}]
        \draw ($(1)!0.5!(2) + (0,7mm)$) node[above] {$v_{k-1}$} -- +(0,-14mm);
        \draw ($(2)!0.5!(3) + (0,7mm)$) node[above] {$v_k$} -- +(0,-14mm);
      
      \end{scope}
      \begin{scope}
        [thick, rounded corners=8pt]
   \coordinate (a) at (0.66666,0.8) node at (a) [below=0] {$\alpha$};
    \coordinate (b) at (0.1,0) node at (b) [below=0] {$\beta$};
    \draw   ($(1) + (0,2mm)$)-- ($(2) + (0,4mm)$) --($(3)+ (0,3mm)$);
    \draw   ($(1)+ (0,1mm)$)--($(2)$);
      \end{scope} 
      \end{tikzpicture}  \quad \mathrm{or} \quad  
      \begin{tikzpicture}
      [ mycell/.style={draw, minimum size=1em},
        dot/.style={mycell,
            append after command={\pgfextra \fill (\tikzlastnode) circle[radius=.2em]; \endpgfextra}}]

      \begin{scope}[every node/.style={circle, fill=black, inner sep=.5mm, outer sep=0}]
        \node[right=5mm of 1] (2) {};
      \end{scope}

      \begin{scope}[every path/.style={dashed}, every node/.style={font=\footnotesize}]
        \draw ($(1)!0.5!(2) + (0,7mm)$) node[above] {$v_{k-1}$} -- +(0,-14mm);
        \draw ($(2)!0.5!(3) + (0,7mm)$) node[above] {$v_k$} -- +(0,-14mm);
      
      \end{scope}
      \begin{scope}
        [thick, rounded corners=8pt]
      \coordinate (a) at (0.66666,-0.4) node at (a) [below=0] {$\beta$};
    \coordinate (b) at (0.1,0.5) node at (b) [below=0] {$\alpha$};    
    \draw   ($(1) - (0,2mm)$)-- ($(2) - (0,4mm)$) --($(3)- (0,3mm)$);
    \draw   ($(1)- (0,1mm)$)--($(2)$);
      \end{scope} 
      \end{tikzpicture}\]

Then we can check that the above map $S(\beta) \twoheadrightarrow S(b) \hookrightarrow S(\alpha)$ is a unique graph map and there is no graph map from $S(\alpha)$ to $S(\beta)$. 
Thus we have $\dim\Hom_\Pi(S(\beta),S(\alpha))=1$ and 
$\dim\Hom_\Pi(S(\alpha),S(\beta))=0$ by Theorem \ref{CWthm}. Clearly, if we replace $\alpha$ and $\beta$, then we have 
$\dim\Hom_\Pi(S(\beta),S(\alpha))=0$ and $\dim\Hom_\Pi(S(\alpha),S(\beta))=1$.

Using Theorem \ref{CWthm}, it is easy to check 
(3) and (4) by the same argument.
\end{proof}

Next we give the following simple observation.  

\begin{prop}\label{noncrossing-hom}
Two arcs $\alpha$ and $\beta$ are noncrossing (i.e. $\alpha$ and $\beta$ satisfy (nc1),(nc2)) if and only if  $\dim\Hom_\Pi(S(\alpha),S(\beta))=0=\dim\Hom_\Pi(S(\beta),S(\alpha))$.
\end{prop}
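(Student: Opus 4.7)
The proof splits naturally into the two implications, and both use Theorem~\ref{CWthm} as the central computational tool.

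For the implication ``noncrossing $\Rightarrow$ both hom-spaces vanish'', I would distinguish two subcases based on whether $\alpha$ and $\beta$ share an endpoint. If they share no endpoint, then (nc1) forces that they do not intersect, and Lemma~\ref{hom lemm}(3) directly yields the vanishing. If they do share an endpoint, then (nc2) restricts the sharing so that the right endpoint of one arc coincides with the left endpoint of the other; say $\alpha$ runs from $p$ to $r$ and $\beta$ from $r$ to $q$. In this situation the vertex supports $Q_\alpha = \{v_p,\dots,v_{r-1}\}$ and $Q_\beta=\{v_r,\dots,v_{q-1}\}$ are disjoint, so $\Hom_\Pi(S(\alpha),S(\beta))$ and $\Hom_\Pi(S(\beta),S(\alpha))$ both vanish trivially.

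For the contrapositive of the reverse implication, I assume $\alpha$ and $\beta$ are crossing and produce a nonzero homomorphism. The sub-case in which (nc2) fails while $\alpha,\beta$ do not additionally interior-intersect is handled directly by Lemma~\ref{hom lemm}(1), (2), or (4), each producing at least one $1$-dimensional hom-space. The remaining and main case is when $\alpha,\beta$ interior-intersect, and here I would construct a graph map via Theorem~\ref{CWthm}.

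The construction pins down a common vertex $v_k\in Q_\alpha\cap Q_\beta$ at which $\alpha$ and $\beta$ have opposite local orientations. After possibly interchanging the two arcs, the arrows of $\alpha$ adjacent to $v_k$ in its arrow sequence (where they exist) are $a_{k-1}^{-}$ on the left and $a_k^{+}$ on the right, so that both arrows emanate from $v_k$ in $Q_\alpha$; symmetrically, the arrows of $\beta$ adjacent to $v_k$ are $a_{k-1}^{+}$ and $a_k^{-}$, so that both arrows terminate at $v_k$ in $Q_\beta$. Then $(\alpha_{<k},\emptyset,\alpha_{>k})$ is a valid quotient factorization of $\alpha$ and $(\beta_{<k},\emptyset,\beta_{>k})$ is a valid submodule factorization of $\beta$, both having middle equal to the single vertex $v_k$, so that the common $S(c)$ is the simple module $S_{v_k}$. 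The induced graph map
\[
S(\alpha)\twoheadrightarrow S_{v_k}\hookrightarrow S(\beta)
\]
is a nonzero element of $\Hom_\Pi(S(\alpha),S(\beta))$ by Theorem~\ref{CWthm}.

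The main obstacle is rigorously justifying that an interior crossing produces such a vertex $v_k$ with the required arrow configuration. My plan is to trace the two arcs from left to right and locate the first horizontal position at which their relative vertical order switches; a small case analysis on whether that swap occurs at a vertical line $v_k$ or inside a segment between two consecutive lines identifies $v_k$ and pins down the signs of the four arrows adjacent to it. Because a genuine topological crossing cannot be removed by isotopy, the sign mismatch at $v_k$ is forced, producing exactly the source/sink configuration described above; this is the step whose verification will require the most care.
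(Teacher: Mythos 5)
Your overall architecture coincides with the paper's: the forward direction is reduced to Lemma~\ref{hom lemm}, and the contrapositive of the reverse direction is split into (nc2)-failures, handled by Lemma~\ref{hom lemm}(1),(2),(4), and interior crossings, handled by exhibiting a graph map via Theorem~\ref{CWthm}. Your forward direction is in fact slightly more careful than the paper's, which cites only Lemma~\ref{hom lemm}(3) and does not explicitly address the permitted configuration $R(\alpha)=L(\beta)$; your disjoint-support observation settles that case correctly.

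The interior-crossing step, however, contains a genuine gap: an essential crossing need not produce a common vertex $v_k$ with the source/sink configuration you describe. Take $n\ge 4$, let $\alpha$ run from point $1$ to point $4$ passing above points $2$ and $3$, and let $\beta$ run from point $2$ to point $5$ passing above points $3$ and $4$. These arcs share no endpoint and must cross ($\beta$ begins strictly below $\alpha$ at point $2$ and lies strictly above $\alpha$'s right endpoint at point $4$), yet $Q_\alpha=(v_3\to v_2\to v_1)$ and $Q_\beta=(v_4\to v_3\to v_2)$ admit no common vertex that is a source of one and a sink of the other: the unique source of $Q_\alpha$ is $v_3$, which is interior to $Q_\beta$, and the unique sink of $Q_\beta$ is $v_2$, which is interior to $Q_\alpha$. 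So no graph map factors through a simple module, and your construction produces nothing. A nonzero homomorphism does exist, but its image is the two-dimensional module $S(a_2^-)$, coming from the quotient factorization $(a_1^-,\,a_2^-,\,\emptyset)\in\FF(\alpha)$ and the submodule factorization $(\emptyset,\,a_2^-,\,a_3^-)\in\sS(\beta)$. The correct combinatorial claim is therefore that a crossing forces the existence of a common segment $c$ of the two arrow sequences (possibly of positive length) whose two ends satisfy the quotient conditions for one arc and the submodule conditions for the other; your ``first switch'' analysis must be reformulated to locate such a segment rather than a single vertex. (For what it is worth, the paper's own one-line justification at this point --- that $S_{v_i}$ lies in the top of one module and hence in the socle of the other --- is open to the same objection on this example, but as written your argument does not establish the statement.)
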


\begin{proof}
The only if part follows from Lemma \ref{hom lemm} (3). 
We will show the if part. 
Assume that $\alpha$ and $\beta$ are crossing. 

{\bf (nc1).} 
First we assume that  $\alpha$ and $\beta$ intersect between the points $i$ and $i+1$. 
Then it is easy to check that 
the simple module $S_{v_i}$ associated to the vertex $v_i$ 
belongs to one of the top of $S(\alpha)$ and $S(\beta)$. 
We assume that it belongs to the top of $S(\alpha)$. 
Then since $\alpha$ and $\beta$ intersect, $S_{v_i}$ belongs to 
the socle of $S(\beta)$. 
Therefore have $\dim\Hom_\Pi(S(\alpha),S(\beta))\neq 0.$ Clearly, if we replace $\alpha$ and $\beta$, then we have $\dim\Hom_\Pi(S(\beta),S(\alpha))\neq 0.$

{\bf (nc2).} Next we assume that $\alpha$ and $\beta$ share either the same right endpoint or the same left endpoint, or both. 
Then Lemma \ref{hom lemm} implies that $\dim\Hom_\Pi(S(\alpha),S(\beta))\neq0$ or $\dim\Hom_\Pi(S(\beta),S(\alpha))\neq0$. 
\end{proof}

\begin{remk}
We remark that Proposition \ref{noncrossing-hom} is essentially identified with \cite[Lemma 4.2.8]{BCZ}, where the authors studied some quotient algebra $\Pi/I$ of $\Pi$. Since there exists a natural fully faithful functor from $\mod(\Pi/I)$ to $\mod\Pi$, we can apply this result to our case.
\end{remk}


For $\G(w)\in\NAD$, we define the map $\Phi$ as follows 
$$\Phi(\G(w))=\{S(\alpha)\ |\ \alpha\in\G(w) \}\subset\mod\Pi.$$


\begin{exam}
Let $w=53271468$. Then, by Example \ref{exam1}, 
we have the following green arc diagram $\G(w)$

\[ \begin{tikzpicture}
      [ mycell/.style={draw, minimum size=1em},
        dot/.style={mycell,
            append after command={\pgfextra \fill (\tikzlastnode) circle[radius=.2em]; \endpgfextra}}]

      \begin{scope}[every node/.style={circle, fill=black, inner sep=.5mm, outer sep=0}]
       \node (1) {};
        \node[right=5mm of 1] (2) {};
        \node[right=5mm of 2] (3) {};
        \node[right=5mm of 3] (4) {};
        \node[right=5mm of 4] (5) {};
        \node[right=5mm of 5] (6) {};
        \node[right=5mm of 6] (7) {};
        \node[right=5mm of 7] (8) {};
      \end{scope}
      \begin{scope}
        [thick, rounded corners=8pt]
        \draw
        (1)-- ($(2) - (0,5mm)$) -- ($(3) - (0,5mm)$) -- ($(4) + (0,5mm)$) -- 
        ($(5) - (0,5mm)$) -- ($(6) + (0,5mm)$) -- (7) ;
         \draw(2)-- (3); 
        \draw        (3)-- ($(4) + (0,7mm)$) -- (5); 
      \end{scope}
      \begin{scope}[every path/.style={dashed}, every node/.style={font=\footnotesize}]
        \draw ($(1)!0.5!(2) + (0,7mm)$) node[above] {$v_1$} -- +(0,-14mm);
        \draw ($(2)!0.5!(3) + (0,7mm)$) node[above] {$v_2$} -- +(0,-14mm);
        \draw ($(3)!0.5!(4) + (0,7mm)$) node[above] {$v_3$} -- +(0,-14mm);
        \draw ($(4)!0.5!(5) + (0,7mm)$) node[above] {$v_4$} -- +(0,-14mm);
        \draw ($(5)!0.5!(6) + (0,7mm)$) node[above] {$v_5$} -- +(0,-14mm);
        \draw ($(6)!0.5!(7) + (0,7mm)$) node[above] {$v_6$} -- +(0,-14mm);
        \draw ($(7)!0.5!(8) + (0,7mm)$) node[above] {$v_7$} -- +(0,-14mm);
      \end{scope}
      \end{tikzpicture} \]
Then $\Phi(\G(w))$ consist of three indecomposable $\Pi$-modules 

$${\begin{smallmatrix}
  v_1&&&&&&&\\
  &v_2&&v_4&&v_6&\\
  &&v_3&&v_5&&\\
  &&&&&&
  \end{smallmatrix}}\oplus
  {\begin{smallmatrix}
  v_2
  \end{smallmatrix}}\oplus{\begin{smallmatrix}
  &&v_4&\\
  &v_3&&
  \end{smallmatrix}}.$$
\end{exam}

Using the above notion, we have the following consequence, which provides a bijection between $W$ and $\sbrick\Pi$.

\begin{thm}\label{bij nad}
We have a bijection 
$$\Phi:\NAD\to\sbrick\Pi.$$
In particular, there exists a bijection $W$ and $\sbrick\Pi$.
\end{thm}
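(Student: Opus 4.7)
The plan is to assemble the bijection directly from the three ingredients already prepared: Proposition \ref{arc-brick bij}(1), which says $S$ bijectively sends arcs to bricks; Proposition \ref{noncrossing-hom}, which translates the noncrossing condition into the vanishing of $\Hom$-spaces in both directions; and Proposition \ref{arc-brick bij}(3), which tells us that semibricks are exactly sets of pairwise $\Hom$-orthogonal bricks with no finiteness hypothesis to check separately.

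First, I would show that $\Phi$ is well defined, i.e.\ that $\Phi(\G)$ is a semibrick for every $\G\in\NAD$. Each $S(\alpha)\in\Phi(\G)$ is a brick by Proposition \ref{arc-brick bij}(1) (equivalently Lemma \ref{S is brick}). For any two distinct arcs $\alpha,\beta$ of $\G$, conditions (nc1) and (nc2) hold by definition of a noncrossing arc diagram, so $\alpha$ and $\beta$ are noncrossing in the sense of Proposition \ref{noncrossing-hom}, and therefore $\Hom_\Pi(S(\alpha),S(\beta))=0=\Hom_\Pi(S(\beta),S(\alpha))$. Together with $S(\alpha)\neq S(\beta)$ (using that $S$ is injective on arcs), this verifies Definition \ref{semibricks}(2).

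Next I would verify injectivity and surjectivity. For injectivity, if $\Phi(\G)=\Phi(\G')$, then the underlying set of arcs must agree because $S$ is injective on arcs (Proposition \ref{arc-brick bij}(1)); hence $\G=\G'$ as arc diagrams. For surjectivity, let $\sS\in\sbrick\Pi$. By Proposition \ref{arc-brick bij}(1) we may write $\sS=\{S(\alpha)\mid \alpha\in\G_\sS\}$ for a uniquely determined set of arcs $\G_\sS$. For any two distinct $\alpha,\beta\in\G_\sS$ the semibrick condition gives $\Hom_\Pi(S(\alpha),S(\beta))=0=\Hom_\Pi(S(\beta),S(\alpha))$, so $\alpha$ and $\beta$ are noncrossing by Proposition \ref{noncrossing-hom}; hence (nc1) and (nc2) hold pairwise on $\G_\sS$, which is exactly what it means for $\G_\sS$ to lie in $\NAD$. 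Thus $\Phi(\G_\sS)=\sS$.

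Finally, for the ``in particular'' statement, I would simply compose with Theorem \ref{Reading}: the map $\G:W\to\NAD$ is a bijection, so $\Phi\circ\G:W\to\sbrick\Pi$ is a bijection as well. I do not anticipate a serious obstacle; the entire argument is a bookkeeping composition of the earlier results, and the only subtle point is recognizing that Proposition \ref{noncrossing-hom} is a genuine ``if and only if'' matching precisely the pairwise conditions (nc1) and (nc2) used in Definition \ref{def nad}, so that surjectivity does not require any extra combinatorial input.
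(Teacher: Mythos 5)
Your proposal is correct and follows essentially the same route as the paper, which likewise deduces well-definedness and surjectivity from Propositions \ref{arc-brick bij} and \ref{noncrossing-hom}, injectivity from the injectivity of $S$ on arcs, and the second statement from Theorem \ref{Reading}. (The only cosmetic difference is that Proposition \ref{arc-brick bij}(3) is not actually needed here, since the statement concerns $\sbrick\Pi$ directly rather than its left- or right-finite variants.)
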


\begin{proof}
From Propositions \ref{arc-brick bij} and 
\ref{noncrossing-hom}, the map is well-defined and surjective, and it is clearly injective. 
The second statement follows from Theorem \ref{Reading}.
\end{proof}

We prepare the following lemma for later use.

\begin{lemm}\label{ext lemm}
Assume that two arcs $\alpha$ and $\beta$ do not intersect nor share any endpoint. Then we have 
$$\dim\Ext^1_\Pi(S(\alpha),S(\beta))=0=\dim\Ext^1_\Pi(S(\beta),S(\alpha)).$$
\end{lemm}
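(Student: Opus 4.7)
The plan is to combine Lemma~\ref{hom lemm}(3) with the $2$-Calabi--Yau type identity of Proposition~\ref{cw}. The hypothesis of the lemma is exactly the hypothesis of Lemma~\ref{hom lemm}(3), so both $\Hom_\Pi(S(\alpha), S(\beta))$ and $\Hom_\Pi(S(\beta), S(\alpha))$ already vanish. Proposition~\ref{cw} then reduces the claim to the purely numerical statement
\[
(\underline{\dim} S(\alpha), \underline{\dim} S(\beta)) = 0,
\]
since under this bilinear form the right-hand side becomes $-\dim \Ext^1_\Pi(S(\alpha), S(\beta))$. Because $(-,-)$ is symmetric, the same identity will simultaneously give $\Ext^1_\Pi(S(\beta), S(\alpha)) = 0$, so both vanishings fall out at once.

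To compute the form, I would use that the dimension vector of $S(\alpha)$ is the indicator function of the interval $I_\alpha := \{l_\alpha, l_\alpha+1, \ldots, r_\alpha - 1\} \subseteq \{1, \ldots, n\}$, where $l_\alpha < r_\alpha$ are the endpoints of $\alpha$, and analogously for $\beta$. Since $\alpha$ and $\beta$ share no endpoint and do not cross, the four endpoints cannot interleave (an interleaving $l_\alpha < l_\beta < r_\alpha < r_\beta$ would topologically force the two arcs to cross), so after possibly swapping $\alpha$ and $\beta$ one is in one of two configurations: either (i) $r_\alpha < l_\beta$, meaning the intervals $I_\alpha, I_\beta$ are separated; or (ii) $l_\alpha < l_\beta < r_\beta < r_\alpha$, meaning $I_\beta \subsetneq I_\alpha$. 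In case (i), every product $x_i y_i$, $x_i y_{i+1}$ and $x_{i+1} y_i$ is zero (the two indicator intervals have a gap of at least one between them, since $r_\alpha \le l_\beta - 1$), so the form vanishes trivially. In case (ii), a direct count using
\[
(\mathbf{x}, \mathbf{y}) = 2\sum_{i} x_i y_i - \sum_{i} (x_i y_{i+1} + x_{i+1} y_i)
\]
shows that the diagonal contribution $2|I_\alpha \cap I_\beta| = 2(r_\beta - l_\beta)$ is exactly cancelled by the two adjacency contributions, each also of size $r_\beta - l_\beta$, so the form again vanishes.

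The only mildly delicate point is the combinatorial observation that interleaving endpoints force a crossing; if one prefers to avoid a topological argument, one can instead list the $\binom{4}{2}/2 = 3$ possible orderings of the four distinct endpoints and eliminate the interleaving ordering by hand using the explicit shape of the arcs. Beyond that, the proof is a one-line application of Proposition~\ref{cw} plus the two routine bilinear form evaluations above, so I do not anticipate any substantial obstacle.
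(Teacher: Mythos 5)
Your overall strategy is exactly the one the paper uses: vanishing of both Hom-spaces from Lemma \ref{hom lemm}(3), Proposition \ref{cw} to convert the claim into $(\underline{\dim}S(\alpha),\underline{\dim}S(\beta))=0$, and then an evaluation of the bilinear form on the two indicator vectors. The paper states the form-vanishing in one line; your explicit interval computation is the right way to justify it.

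However, there is a genuine error in your case analysis. The claim that an interleaving of endpoints $l_\alpha<l_\beta<r_\alpha<r_\beta$ ``topologically forces the two arcs to cross'' is false, and your fallback suggestion to ``eliminate the interleaving ordering by hand'' cannot work either, because that configuration is realizable by noncrossing arcs with no shared endpoints: take $\alpha$ from $1$ to $3$ passing \emph{above} the point $2$ and $\beta$ from $2$ to $4$ passing \emph{below} the point $3$; one lives in the closed upper half-plane and the other in the closed lower half-plane, so they meet nowhere. (This pair actually occurs in the paper, e.g.\ as the green arc diagram $\G(3142)$ in the $n=3$ Hasse diagram.) So your proof omits a case that genuinely arises. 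Fortunately the lemma survives: in the interleaved configuration the intervals are $I_\alpha=\{l_\alpha,\dots,r_\alpha-1\}$ and $I_\beta=\{l_\beta,\dots,r_\beta-1\}$ with overlap of size $r_\alpha-l_\beta$, the diagonal term contributes $2(r_\alpha-l_\beta)$, and the two adjacency sums contribute $(r_\alpha-l_\beta+1)$ and $(r_\alpha-l_\beta-1)$ respectively, so the form is again $0$. You need to add this third case (with its computation) and delete the false topological assertion; with that repair the argument is complete and coincides with the paper's.
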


\begin{proof}
Since the dimension vectors of $S(\alpha)$ and $S(\beta)$ have entries only 0 or 1, we have $(\underline{\dim}S(\alpha),\underline{\dim}S(\beta))=0$ by the assumption. 
On the other hand, by Lemma \ref{hom lemm} (3), we have 
$\dim\Hom_\Pi(S(\alpha),S(\beta))=0=\dim\Hom_\Pi(S(\beta),S(\alpha))$. 
Therefore, by Proposition \ref{cw}, we have 
$$0=(\underline{\dim}S(\alpha),\underline{\dim}S(\beta))=-\dim\Ext_\Pi^1(S(\alpha),S(\beta)).$$ 
Thus, we get  $\dim\Ext^1_\Pi(S(\alpha),S(\beta))=0$ and, similarly, $\dim\Ext^1_\Pi(S(\beta),S(\alpha))=0$. 
\end{proof}


\subsection{A partial order and mutation}\label{mutation}
In this subsection, we discuss mutation and partial orders 
of arc diagrams. 
We define mutation of $\DAD$ such that the left action of a transposition on $w$ is compatible with the mutation of the corresponding double arc diagrams. 
Moreover we will show the compatibility of the mutation of double arc diagrams and 2-term SMCs. 
Using this result, we show a poset isomorphism $W\to\twosmc\Pi$.

For our purpose, we give a more precise correspondence $\D:W\to\DAD.$ 
Let $w=(w_1w_2\cdots w_{n+1})\in W$. By the map $\D:W\to\DAD$, 
for a pair $(w_i,w_{i+1})$, we have an arc whose endpoints are $w_i$ and $w_{i+1}$. We denote this arc by $\alpha_{w_iw_{i+1}}$ (note that the arc $\alpha_{w_iw_{i+1}}$ depends not only on  $(w_i,w_{i+1})$ but also on $(w_1w_2\cdots w_{n+1})$).

Recall that $\alpha_{w_iw_{i+1}}$ is green (resp. red) if $w_i>w_{i+1}$ 
(resp. $w_i<w_{i+1}$). 
For simplicity, we write 
$\alpha_{w_iw_{i+1}}(0)$ (resp. $\alpha_{w_iw_{i+1}}(1)$) if $\alpha_{w_iw_{i+1}}$ is 
green (resp. red). 
We denote by $c_i\in\{0,1\}$ 
the color of $\alpha_{w_iw_{i+1}}$. 
In this notation, we can write 
$$\D(w) = \{\alpha_{w_1w_{2}}(c_1),\alpha_{w_2w_{3}}(c_2),\cdots,\alpha_{w_{n}w_{n+1}}(c_{n})\},$$

and we define  
$\Psi:\DAD\to\{\textnormal{the set of 2-term complexes of }\Db(\mod\Pi)\}$ $$\Psi(\D(w))=\Psi(\{\alpha_{w_{i}w_{i+1}}(c_i)\})=\{S(\alpha_{w_{i}w_{i+1}})[c_i]\}\  \ \ ({1\leq i\leq n}).$$

It is also written as 
$\Psi(\D(w))=\Phi(\G(w))\sqcup\Phi(\R(w))[1]$. 
By abuse of notation, for an arc $\alpha_{w_{i}w_{i+1}}(c_i)$ of $\D(w)$, we denote by $$\Psi(\alpha_{w_{i}w_{i+1}}(c_i))=S(\alpha_{w_{i}w_{i+1}})[c_i]$$ the corresponding 2-term indecomposable complex obtained by $\Psi$. 
We will show that $\Psi$ gives a bijection between $\DAD$ and $\twosmc\Pi$.


First,  
we will define a mutation on $\DAD$. 
This is defined by applying a \emph{half-twist} to a diagram.  To avoid an confusion, 
we give another formulation here.  

We introduce the following terminology.
For an arc $\alpha$, we denote by $L(\alpha)\in\{1,2,\cdots,n\}$ the left endpoint of $\alpha$ and $R(\alpha)\in\{2,3,\cdots,n+1\}$ the right endpoint of $\alpha$. 
Recall that, for an arc $\alpha$ and $1\leq i\leq n$, we denote by $\alpha|_{v_{i}}$ (resp.  ${}_{v_{i}}|\alpha$,  ${}_{v_{i}}|\alpha|_{v_{i+1}}$) the left segment (resp. the right segment, the central segment) of $\alpha$ cut by the line $v_i$ (resp. $v_i$,  $v_i$ and $v_{i+1}$).
Then we define the following transformation of arcs.

\begin{defi}
Fix $w\in W$. 
Let $\alpha,\beta$ ($\alpha\neq\beta$) be arcs of $\D(w)$. 
Assume that $\alpha$ and $\beta$ share one of the endpoint $i$.  
Then, by transforming $\beta$, we define a new arc $\gamma=\sigma(\beta;\alpha)$ as follows.

\begin{itemize}
\item[(1-i)] 
Assume that $R(\alpha)=L(\beta)=i$. 
Then we define $\gamma$, where $\gamma|_{v_{i-1}}$ is isotopy to  $\alpha|_{v_{i-1}}$,   ${}_{v_{i}}|\gamma$ is isotopy to  ${}_{v_{i}}|\beta$ and  ${}_{v_{i-1}}|\gamma|_{v_{i}}$ is the arc above to the point $i$.
This is illustrated by the following example.

\[\begin{tikzpicture}
      [ mycell/.style={draw, minimum size=1em},
        dot/.style={mycell,
            append after command={\pgfextra \fill (\tikzlastnode) circle[radius=.2em]; \endpgfextra}}]
      \begin{scope}[every node/.style={circle, fill=black, inner sep=.5mm, outer sep=0}]
        \node[left=5mm of 1] (-1) {};
        \node (0) {};
        \node[right=5mm of 0] (1) {};
        \node[right=5mm of 1] (2) {};
        \node[right=5mm of 2] (3) {};
        \node[right=5mm of 3] (4) {};
        \node[right=5mm of 4] (5) {};
         \node[right=5mm of 5] (6) {};
      \end{scope}
      \begin{scope}
        [thick, rounded corners=8pt]
        \coordinate (a) at (3,0.8) node at (a) [below=0] {$\beta$};
    \coordinate (b) at (0.1,0) node at (b) [below=0] {$\alpha$};
     \draw (-1)--($(0) + (0,3mm)$)-- ($(1) - (0,5mm)$)-- ($(2) + (0,3mm)$) -- (3) ;
        \draw  (3)-- ($(4) + (0,5mm)$) --($(5) - (0,5mm)$) --(6) ;
      \end{scope}
      \end{tikzpicture}\ \ \longrightarrow \ \ 
      \begin{tikzpicture}
      [ mycell/.style={draw, minimum size=1em},
        dot/.style={mycell,
            append after command={\pgfextra \fill (\tikzlastnode) circle[radius=.2em]; \endpgfextra}}]
      \begin{scope}[every node/.style={circle, fill=black, inner sep=.5mm, outer sep=0}]
      \node[left=5mm of 1] (-2) {};
        \node[left=5mm of 0] (-1) {};
        \node (0) {};
        \node[right=5mm of 0] (1) {};
        \node[right=5mm of 1] (2) {};
        \node[right=5mm of 2] (3) {};
        \node[right=5mm of 3] (4) {};
        \node[right=5mm of 4] (5) {};
         \node[right=5mm of 5] (6) {};
      \end{scope}
      \begin{scope}
        [thick, rounded corners=8pt]
         \coordinate (a) at (3,0.8) node at (a) [below=0] {$\gamma$};
          \coordinate (b) at (0.1,0) node at (b) [below=0] {$\alpha$};
     \draw (-1)--($(0) + (0,3mm)$)-- ($(1) - (0,5mm)$)-- ($(2) + (0,3mm)$) -- (3) ;
        \draw  (-1)--($(0) + (0,5mm)$)-- ($(1) - (0,3mm)$)-- ($(2) + (0,5mm)$) --($(3) + (0,2mm)$)-- ($(4) + (0,5mm)$) --($(5) - (0,5mm)$) --(6) ;
      \end{scope}
      \end{tikzpicture}\] 
We remark that we have a natural interpretation as an exact sequence. 
For example, if $\alpha$ and $\beta$ are green, then we have the exact sequence
$$\xymatrix{0\ar[r]&
S(\alpha) \ar[r]^{\ \ }
& S(\gamma) \ar[r] & S(\beta)\ar[r]&0.
}$$ 

\item[(1-ii)]
If $L(\alpha)>L(\beta)$, then 
we define $\gamma$ such that $\gamma|_{v_{i-1}}$ is isotopy to  $\beta|_{v_{i-1}}$ and $i$ is the right endpoint of $\gamma$.
\[   \begin{tikzpicture}
      [ mycell/.style={draw, minimum size=1em},
        dot/.style={mycell,
            append after command={\pgfextra \fill (\tikzlastnode) circle[radius=.2em]; \endpgfextra}}]
      \begin{scope}[every node/.style={circle, fill=black, inner sep=.5mm, outer sep=0}]
      \node[left=5mm of 1] (-2) {};
        \node[left=5mm of 0] (-1) {};
        \node (0) {};
        \node[right=5mm of 0] (1) {};
        \node[right=5mm of 1] (2) {};
        \node[right=5mm of 2] (3) {};
        \node[right=5mm of 3] (4) {};
        \node[right=5mm of 4] (5) {};
         \node[right=5mm of 5] (6) {};
      \end{scope}
      \begin{scope}
        [thick, rounded corners=8pt]
\coordinate (a) at (3,0.8) node at (a) [below=0] {$\beta$};
    \coordinate (b) at (2.3,0) node at (b) [below=0] {$\alpha$};        
 \draw  (-1)--($(0) + (0,5mm)$)-- ($(1) - (0,3mm)$)-- ($(2) + (0,5mm)$) -- ($(3) + (0,5mm)$)-- ($(4) + (0,5mm)$) --($(5) - (0,3mm)$) --(6);
        \draw  (3)-- ($(4) + (0,3mm)$) --($(5) - (0,5mm)$) --(6) ;
      \end{scope}
      \end{tikzpicture}\ \ \longrightarrow \ \ 
      \begin{tikzpicture}
      [ mycell/.style={draw, minimum size=1em},
        dot/.style={mycell,
            append after command={\pgfextra \fill (\tikzlastnode) circle[radius=.2em]; \endpgfextra}}]
      \begin{scope}[every node/.style={circle, fill=black, inner sep=.5mm, outer sep=0}]
          \node[left=5mm of 1] (-2) {};
        \node[left=5mm of 0] (-1) {};
        \node (0) {};
        \node[right=5mm of 0] (1) {};
        \node[right=5mm of 1] (2) {};
        \node[right=5mm of 2] (3) {};
        \node[right=5mm of 3] (4) {};
        \node[right=5mm of 4] (5) {};
         \node[right=5mm of 5] (6) {};
      \end{scope}
      \begin{scope}
        [thick, rounded corners=8pt]
   \coordinate (a) at (2.3,0) node at (a) [below=0] {$\alpha$};
    \coordinate (b) at (0.1,0) node at (b) [below=0] {$\gamma$};
     \draw (-1)--($(0) + (0,3mm)$)-- ($(1) - (0,5mm)$)-- ($(2) + (0,3mm)$) -- (3) ;
        \draw  
     (3)-- ($(4) + (0,3mm)$) --($(5) - (0,5mm)$) --(6)     ;
      \end{scope}
      \end{tikzpicture}\]


\item[(1-iii)]
If $L(\alpha)<L(\beta)$, then 
we define $\gamma$, where $\gamma|_{v_{i-1}}$ is isotopy to  $\alpha|_{v_{i-1}}$ and $i$ is the right endpoint of $\gamma$.
\[   \begin{tikzpicture}
      [ mycell/.style={draw, minimum size=1em},
        dot/.style={mycell,
            append after command={\pgfextra \fill (\tikzlastnode) circle[radius=.2em]; \endpgfextra}}]
      \begin{scope}[every node/.style={circle, fill=black, inner sep=.5mm, outer sep=0}]
      \node[left=5mm of 1] (-2) {};
        \node[left=5mm of 0] (-1) {};
        \node (0) {};
        \node[right=5mm of 0] (1) {};
        \node[right=5mm of 1] (2) {};
        \node[right=5mm of 2] (3) {};
        \node[right=5mm of 3] (4) {};
        \node[right=5mm of 4] (5) {};
         \node[right=5mm of 5] (6) {};
      \end{scope}
      \begin{scope}
        [thick, rounded corners=8pt]
        \coordinate (a) at (3,0.8) node at (a) [below=0] {$\beta$};
    \coordinate (b) at (0.1,0) node at (b) [below=0] {$\alpha$};
 \draw (-1)--($(0) + (0,3mm)$)-- ($(1) - (0,5mm)$)-- ($(2) + (0,3mm)$) -- ($(3) - (0,3mm)$)-- ($(4) + (0,3mm)$) --($(5) - (0,5mm)$) --(6);
        \draw  (3)-- ($(4) + (0,5mm)$) --($(5) - (0,3mm)$) --(6) ;
      \end{scope}
      \end{tikzpicture}\ \ \longrightarrow \ \ 
      \begin{tikzpicture}
      [ mycell/.style={draw, minimum size=1em},
        dot/.style={mycell,
            append after command={\pgfextra \fill (\tikzlastnode) circle[radius=.2em]; \endpgfextra}}]
      \begin{scope}[every node/.style={circle, fill=black, inner sep=.5mm, outer sep=0}]
      \node[left=5mm of 1] (-2) {};
        \node[left=5mm of 0] (-1) {};
        \node (0) {};
        \node[right=5mm of 0] (1) {};
        \node[right=5mm of 1] (2) {};
        \node[right=5mm of 2] (3) {};
        \node[right=5mm of 3] (4) {};
        \node[right=5mm of 4] (5) {};
         \node[right=5mm of 5] (6) {};
      \end{scope}
      \begin{scope}
        [thick, rounded corners=8pt]
        \coordinate (a) at (0.3,0.8) node at (a) [below=0] {$\gamma$};
            \coordinate (b) at (0.1,0) node at (b) [below=0] {$\alpha$};
 \draw (-1)--($(0) + (0,3mm)$)-- ($(1) - (0,5mm)$)-- ($(2) + (0,3mm)$) -- ($(3) - (0,3mm)$)-- ($(4) + (0,3mm)$) --($(5) - (0,5mm)$) --(6);
        \draw (-1)--($(0) + (0,5mm)$)-- ($(1) - (0,3mm)$)-- ($(2) + (0,5mm)$) -- (3);
      \end{scope}
      \end{tikzpicture}\]


\item[(2-i)] Assume that $L(\alpha)=R(\beta)=i$. 
Then we define $\gamma$, where $\gamma|_{v_{i-1}}$ is isotopy to  $\beta|_{v_{i-1}}$,   ${}_{v_{i}}|\gamma$ is isotopy to  ${}_{v_{i}}|\alpha$ and  ${}_{v_{i-1}}|\gamma|_{v_{i}}$ is the arc below to the point $i$.

\[\begin{tikzpicture}
      [ mycell/.style={draw, minimum size=1em},
        dot/.style={mycell,
            append after command={\pgfextra \fill (\tikzlastnode) circle[radius=.2em]; \endpgfextra}}]
      \begin{scope}[every node/.style={circle, fill=black, inner sep=.5mm, outer sep=0}]
          \node[left=5mm of 1] (-2) {};
        \node[left=5mm of 0] (-1) {};
        \node (0) {};
        \node[right=5mm of 0] (1) {};
        \node[right=5mm of 1] (2) {};
        \node[right=5mm of 2] (3) {};
        \node[right=5mm of 3] (4) {};
        \node[right=5mm of 4] (5) {};
         \node[right=5mm of 5] (6) {};
      \end{scope}
      \begin{scope}
        [thick, rounded corners=8pt]
        \coordinate (a) at (3,0.8) node at (a) [below=0] {$\alpha$};
    \coordinate (b) at (0.1,0) node at (b) [below=0] {$\beta$};
     \draw (-1)--($(0) + (0,3mm)$)-- ($(1) - (0,5mm)$)-- ($(2) + (0,3mm)$) -- (3) ;
        \draw  (3)-- ($(4) + (0,5mm)$) --($(5) - (0,3mm)$) --(6) ;
      \end{scope}
      \end{tikzpicture}\ \ \longrightarrow \ \ 
      \begin{tikzpicture}
      [ mycell/.style={draw, minimum size=1em},
        dot/.style={mycell,
            append after command={\pgfextra \fill (\tikzlastnode) circle[radius=.2em]; \endpgfextra}}]
      \begin{scope}[every node/.style={circle, fill=black, inner sep=.5mm, outer sep=0}]
      \node[left=5mm of 1] (-2) {};
        \node[left=5mm of 0] (-1) {};
        \node (0) {};
        \node[right=5mm of 0] (1) {};
        \node[right=5mm of 1] (2) {};
        \node[right=5mm of 2] (3) {};
        \node[right=5mm of 3] (4) {};
        \node[right=5mm of 4] (5) {};
         \node[right=5mm of 5] (6) {};
      \end{scope}
      \begin{scope}
        [thick, rounded corners=8pt]
            \coordinate (b) at (0.1,0) node at (b) [below=0] {$\gamma$};
              \coordinate (a) at (3,0.8) node at (a) [below=0] {$\alpha$};
 \draw (-1)--($(0) + (0,3mm)$)-- ($(1) - (0,5mm)$)-- ($(2) + (0,3mm)$) -- ($(3) - (0,3mm)$)-- ($(4) + (0,3mm)$) --($(5) - (0,5mm)$) --(6);
        \draw  (3)-- ($(4) + (0,5mm)$) --($(5) - (0,3mm)$) --(6) ;
      \end{scope}
      \end{tikzpicture}\]


\item[(2-ii)]  If $R(\alpha)>R(\beta)$, then 
we define $\gamma$ such that ${}_{v_{i}}|\gamma$ is isotopy to  ${}_{v_{i}}|\alpha$ and $i$ is the left endpoint of $\gamma$.
\[ \begin{tikzpicture}
      [ mycell/.style={draw, minimum size=1em},
        dot/.style={mycell,
            append after command={\pgfextra \fill (\tikzlastnode) circle[radius=.2em]; \endpgfextra}}]
      \begin{scope}[every node/.style={circle, fill=black, inner sep=.5mm, outer sep=0}]
      \node[left=5mm of 1] (-2) {};
        \node[left=5mm of 0] (-1) {};
        \node (0) {};
        \node[right=5mm of 0] (1) {};
        \node[right=5mm of 1] (2) {};
        \node[right=5mm of 2] (3) {};
        \node[right=5mm of 3] (4) {};
        \node[right=5mm of 4] (5) {};
         \node[right=5mm of 5] (6) {};
      \end{scope}
      \begin{scope}
        [thick, rounded corners=8pt]
        \coordinate (a) at (3,0.8) node at (a) [below=0] {$\alpha$};
    \coordinate (b) at (0,0) node at (b) [below=0] {$\beta$};  
 \draw (-1)--($(0) + (0,5mm)$)-- ($(1) - (0,3mm)$)-- ($(2) + (0,5mm)$) -- ($(3) + (0,5mm)$)-- ($(4) + (0,5mm)$) --($(5) - (0,3mm)$) --(6);
        \draw  (-1)--($(0) + (0,3mm)$)-- ($(1) - (0,5mm)$)-- ($(2) + (0,3mm)$) -- (3) ;
      \end{scope}
      \end{tikzpicture}\longrightarrow \ \ 
      \begin{tikzpicture}
      [ mycell/.style={draw, minimum size=1em},
        dot/.style={mycell,
            append after command={\pgfextra \fill (\tikzlastnode) circle[radius=.2em]; \endpgfextra}}]
      \begin{scope}[every node/.style={circle, fill=black, inner sep=.5mm, outer sep=0}]
      \node[left=5mm of 1] (-2) {};
        \node[left=5mm of 0] (-1) {};
        \node (0) {};
        \node[right=5mm of 0] (1) {};
        \node[right=5mm of 1] (2) {};
        \node[right=5mm of 2] (3) {};
        \node[right=5mm of 3] (4) {};
        \node[right=5mm of 4] (5) {};
         \node[right=5mm of 5] (6) {};
      \end{scope}
      \begin{scope}
        [thick, rounded corners=8pt]
\coordinate (a) at (3,0.8) node at (a) [below=0] {$\alpha$};
    \coordinate (b) at (2.3,0) node at (b) [below=0] {$\gamma$};        
 \draw  (-1)--($(0) + (0,5mm)$)-- ($(1) - (0,3mm)$)-- ($(2) + (0,5mm)$) -- ($(3) + (0,5mm)$)-- ($(4) + (0,5mm)$) --($(5) - (0,3mm)$) --(6);
        \draw  (3)-- ($(4) + (0,3mm)$) --($(5) - (0,5mm)$) --(6) ;
      \end{scope}
      \end{tikzpicture}\]


\item[(2-iii)]
If $R(\alpha)<R(\beta)$, then 
we define $\gamma$ such that ${}_{v_{i}}|\gamma$ is isotopy to  ${}_{v_{i}}|\beta$ and $i$ is the left endpoint of $\gamma$.
\[  \begin{tikzpicture}
      [ mycell/.style={draw, minimum size=1em},
        dot/.style={mycell,
            append after command={\pgfextra \fill (\tikzlastnode) circle[radius=.2em]; \endpgfextra}}]
      \begin{scope}[every node/.style={circle, fill=black, inner sep=.5mm, outer sep=0}]
      \node[left=5mm of 1] (-2) {};
        \node[left=5mm of 0] (-1) {};
        \node (0) {};
        \node[right=5mm of 0] (1) {};
        \node[right=5mm of 1] (2) {};
        \node[right=5mm of 2] (3) {};
        \node[right=5mm of 3] (4) {};
        \node[right=5mm of 4] (5) {};
         \node[right=5mm of 5] (6) {};
      \end{scope}
      \begin{scope}
        [thick, rounded corners=8pt]
        \coordinate (a) at (1,0.8) node at (a) [below=0] {$\alpha$};
    \coordinate (b) at (2.3,0) node at (b) [below=0] {$\beta$};  
 \draw (-1)--($(0) + (0,3mm)$)-- ($(1) - (0,5mm)$)-- ($(2) + (0,3mm)$) -- ($(3) - (0,3mm)$)-- ($(4) + (0,3mm)$) --($(5) - (0,5mm)$) --(6);
        \draw (-1)--($(0) + (0,5mm)$)-- ($(1) - (0,3mm)$)-- ($(2) + (0,5mm)$) -- (3);
      \end{scope}
      \end{tikzpicture} \ \ \longrightarrow \ \ 
      \begin{tikzpicture}
      [ mycell/.style={draw, minimum size=1em},
        dot/.style={mycell,
            append after command={\pgfextra \fill (\tikzlastnode) circle[radius=.2em]; \endpgfextra}}]
      \begin{scope}[every node/.style={circle, fill=black, inner sep=.5mm, outer sep=0}]
          \node[left=5mm of 1] (-2) {};
        \node[left=5mm of 0] (-1) {};
        \node (0) {};
        \node[right=5mm of 0] (1) {};
        \node[right=5mm of 1] (2) {};
        \node[right=5mm of 2] (3) {};
        \node[right=5mm of 3] (4) {};
        \node[right=5mm of 4] (5) {};
         \node[right=5mm of 5] (6) {};
      \end{scope}
      \begin{scope}
        [thick, rounded corners=8pt]
        \coordinate (a) at (3,0.8) node at (a) [below=0] {$\gamma$};
    \coordinate (b) at (0.1,0) node at (b) [below=0] {$\alpha$};
     \draw (-1)--($(0) + (0,3mm)$)-- ($(1) - (0,5mm)$)-- ($(2) + (0,3mm)$) -- (3) ;
        \draw  (3)-- ($(4) + (0,5mm)$) --($(5) - (0,3mm)$) --(6) ;
      \end{scope}
      \end{tikzpicture}
      \]

\end{itemize}
\end{defi}

Using the above transformation, we define a mutation of $\DAD$ as follows.

\begin{defi}\label{muta}
Let $w=(w_1w_2\cdots w_{n+1})\in W$ and 
$\D(w)=\{\alpha_{w_1w_{2}}(c_1),\alpha_{w_2w_{3}}(c_2),\cdots,\alpha_{w_{n}w_{n+1}}(c_n)\}$. 
Fix $1\leq i\leq n$ and assume that $w_i>w_{i+1}$, or equivalently, $\alpha_{w_iw_{i+1}}$ is green.
We define \emph{left mutation} $$\mu_i^-(\D(w)):=\{\mu_i^-(\alpha_{w_1w_{2}}(c_1)),\mu_i^-(\alpha_{w_2w_{3}}(c_2)),\cdots,\mu_i^-(\alpha_{w_{n}w_{n+1}}(c_{n}))\}$$ as follows.

\begin{itemize}
\item[(1)] 
For $k\neq i-1,i,i+1$, we do not change $\alpha_{w_kw_{k+1}}$ including its color.
Equivalently, we define 
$$\mu_i^-(\alpha_{w_kw_{k+1}}(c_k))=\alpha_{w_kw_{k+1}}(c_k).$$
\item[(2)]  
We only change the color of $\alpha_{w_iw_{i+1}}$ from green to red. 
Equivalently, 
we define 
$$\mu_i^-(\alpha_{w_iw_{i+1}}(0))=\alpha_{w_iw_{i+1}}(1).$$
\item[(3)] 
We define $\mu_i^-(\alpha_{w_{i-1}w_{i}}(c_{i-1}))=\sigma(\alpha_{w_{i-1}w_{i}};\alpha_{w_{i}w_{i+1}})(c_{i-1}')$,
where 
$$c_{i-1}' =
\begin{cases}
1 & \mathrm{if} \ w_{i-1}<w_{i+1}, \\
0 & \mathrm{if} \ w_{i-1}>w_{i} \mathrm{\ or\ }w_i>w_{i-1}>w_{i+1}.
\end{cases}
$$

Similarly, we define $\mu_i(\alpha_{w_{i+1}w_{i+2}}(c_{i+1}))=\sigma(\alpha_{w_{i+1}w_{i+2}};\alpha_{w_{i}w_{i+1}})(c_{i+1}')$,
where 
$$c_{i+1}' =
\begin{cases}
1 & \mathrm{if} \ w_{i}<w_{i+2}, \\
0 & \mathrm{if} \ w_{i+1}>w_{i+2} \mathrm{\ or\ }w_{i+1}<w_{i+2}<w_{i}.
\end{cases}
$$
\end{itemize}
If $w_i<w_{i+1}$, or equivalently, $\alpha_{w_iw_{i+1}}$ is red,
then we can dually define \emph{right mutation} $\mu^+_i(\D(w))$. We denote by $\mu_i$ left or right mutation.
\end{defi}

\begin{remk}
We can formulate the above mutation in terms of  half-twists. 
Namely, the new diagram is defined by applying a half-twist \cite[1.6.2 Half-twists]{KT} to $\alpha_{w_{i}w_{i+1}}$ and 
hence mutation is nothing but graded half-twists.
\end{remk}


\begin{exam}
Consider $n=3$ and the set of double noncrossing arc diagrams $\DAD_3$. 
The following graph is a mutation graph of $\DAD_3$, where 
the arrows denote left mutations.

\begin{align*}
\begin{xy}
(  0,-36) *+{\begin{tikzpicture}
      [ mycell/.style={draw, minimum size=1em},
        dot/.style={mycell,
            append after command={\pgfextra \fill (\tikzlastnode) circle[radius=.2em]; \endpgfextra}}]
\begin{scope}[every node/.style={circle, fill=black, inner sep=.5mm, outer sep=0}]
\node (1) {};
\node[right=3mm of 1] (2) {};
\node[right=3mm of 2] (3) {};
\node[right=3mm of 3] (4) {};
      \end{scope}
      \begin{scope}
        [thick, rounded corners=8pt]
         \draw[red,dotted] (1)--($(2)$)--($(3)$)-- (4);
      \end{scope}
      \end{tikzpicture}}="1234",
( 24,-24) *+{\begin{tikzpicture}
      [ mycell/.style={draw, minimum size=1em},
        dot/.style={mycell,
            append after command={\pgfextra \fill (\tikzlastnode) circle[radius=.2em]; \endpgfextra}}]
\begin{scope}[every node/.style={circle, fill=black, inner sep=.5mm, outer sep=0}]
\node (1) {};
\node[right=3mm of 1] (2) {};
\node[right=3mm of 2] (3) {};
\node[right=3mm of 3] (4) {};
      \end{scope}
      \begin{scope}
        [thick, rounded corners=8pt]
        \draw[green]
        (1)--(2); 
         \draw[red,dotted] (4)-- (3);
          \draw[red,dotted] (1)--($(2) - (0,3mm)$)-- (3);
      \end{scope}
      \end{tikzpicture}}="2134",
(  0,-24) *+{\begin{tikzpicture}
      [ mycell/.style={draw, minimum size=1em},
        dot/.style={mycell,
            append after command={\pgfextra \fill (\tikzlastnode) circle[radius=.2em]; \endpgfextra}}]
\begin{scope}[every node/.style={circle, fill=black, inner sep=.5mm, outer sep=0}]
\node (1) {};
\node[right=3mm of 1] (2) {};
\node[right=3mm of 2] (3) {};
\node[right=3mm of 3] (4) {};
      \end{scope}
      \begin{scope}
        [thick, rounded corners=8pt]
        \draw[green]
        (2)-- (3); 
        \draw[red,dotted] (1)--($(2) + (0,3mm)$)-- (3);
        \draw[red,dotted] (2)-- ($(3) - (0,3mm)$)-- (4);
      \end{scope}
      \end{tikzpicture}}="1324",
(-24,-24) *+{\begin{tikzpicture}
      [ mycell/.style={draw, minimum size=1em},
        dot/.style={mycell,
            append after command={\pgfextra \fill (\tikzlastnode) circle[radius=.2em]; \endpgfextra}}]
\begin{scope}[every node/.style={circle, fill=black, inner sep=.5mm, outer sep=0}]
\node (1) {};
\node[right=3mm of 1] (2) {};
\node[right=3mm of 2] (3) {};
\node[right=3mm of 3] (4) {};
      \end{scope}
      \begin{scope}
        [thick, rounded corners=8pt]
        \draw[green]
        (3)-- (4); 
        \draw[red,dotted] (1)-- ($(2)$)--($(3) + (0,3mm)$)-- (4);
      \end{scope}
      \end{tikzpicture}}="1243",
( 48,-12) *+{\begin{tikzpicture}
      [ mycell/.style={draw, minimum size=1em},
        dot/.style={mycell,
            append after command={\pgfextra \fill (\tikzlastnode) circle[radius=.2em]; \endpgfextra}}]
\begin{scope}[every node/.style={circle, fill=black, inner sep=.5mm, outer sep=0}]
\node (1) {};
\node[right=3mm of 1] (2) {};
\node[right=3mm of 2] (3) {};
\node[right=3mm of 3] (4) {};
      \end{scope}
      \begin{scope}
        [thick, rounded corners=8pt]
        \draw[green]
        (1)-- ($(2) - (0,2mm)$)-- (3); 
                  \draw[red,dotted] (1)-- ($(2) - (0,3mm)$)--($(3) - (0,3mm)$)-- (4);
  \draw[red,dotted] (2)-- (3);
      \end{scope}
      \end{tikzpicture}}="2314",
( 24,-12) *+{\begin{tikzpicture}
      [ mycell/.style={draw, minimum size=1em},
        dot/.style={mycell,
            append after command={\pgfextra \fill (\tikzlastnode) circle[radius=.2em]; \endpgfextra}}]
\begin{scope}[every node/.style={circle, fill=black, inner sep=.5mm, outer sep=0}]
\node (1) {};
\node[right=3mm of 1] (2) {};
\node[right=3mm of 2] (3) {};
\node[right=3mm of 3] (4) {};
      \end{scope}
      \begin{scope}
        [thick, rounded corners=8pt]
        \draw[green]
        (1)-- ($(2) + (0,3mm)$)-- (3); 
          \draw[red,dotted] (1)-- ($(2)$)--($(3) - (0,2mm)$)-- (4);
      \end{scope}
      \end{tikzpicture}}="3124",
(  0,-12) *+{\begin{tikzpicture}
      [ mycell/.style={draw, minimum size=1em},
        dot/.style={mycell,
            append after command={\pgfextra \fill (\tikzlastnode) circle[radius=.2em]; \endpgfextra}}]
\begin{scope}[every node/.style={circle, fill=black, inner sep=.5mm, outer sep=0}]
\node (1) {};
\node[right=3mm of 1] (2) {};
\node[right=3mm of 2] (3) {};
\node[right=3mm of 3] (4) {};
      \end{scope}
      \begin{scope}
        [thick, rounded corners=8pt]
        \draw[green]
        (1)-- (2); 
         \draw[green] (3)-- (4); 
          \draw[red,dotted] (1)-- ($(2) - (0,3mm)$)--($(3) + (0,3mm)$)-- (4);
      \end{scope}
      \end{tikzpicture}}="2143",
(-24,-12) *+{\begin{tikzpicture}
      [ mycell/.style={draw, minimum size=1em},
        dot/.style={mycell,
            append after command={\pgfextra \fill (\tikzlastnode) circle[radius=.2em]; \endpgfextra}}]
\begin{scope}[every node/.style={circle, fill=black, inner sep=.5mm, outer sep=0}]
\node (1) {};
\node[right=3mm of 1] (2) {};
\node[right=3mm of 2] (3) {};
\node[right=3mm of 3] (4) {};
      \end{scope}
      \begin{scope}
        [thick, rounded corners=8pt]
        \draw[green]        (2)--($(3) - (0,3mm)$)-- (4); 
      \draw[red,dotted] (3)-- (4); 
        \draw[red,dotted] (1)-- ($(2) + (0,2mm)$)-- (3);
      \end{scope}
      \end{tikzpicture}}="1342",
(-48,-12) *+{\begin{tikzpicture}
      [ mycell/.style={draw, minimum size=1em},
        dot/.style={mycell,
            append after command={\pgfextra \fill (\tikzlastnode) circle[radius=.2em]; \endpgfextra}}]
\begin{scope}[every node/.style={circle, fill=black, inner sep=.5mm, outer sep=0}]
\node (1) {};
\node[right=3mm of 1] (2) {};
\node[right=3mm of 2] (3) {};
\node[right=3mm of 3] (4) {};
      \end{scope}
      \begin{scope}
        [thick, rounded corners=8pt]
        \draw[green]        (2)-- ($(3) + (0,2mm)$)-- (4); 
        \draw[red,dotted] (2)-- (3); 
        \draw[red,dotted] (1)-- ($(2) + (0,3mm)$)--($(3) + (0,3mm)$)-- (4);
      \end{scope}
      \end{tikzpicture}}="1423",
( 60,  0) *+{\begin{tikzpicture}
      [ mycell/.style={draw, minimum size=1em},
        dot/.style={mycell,
            append after command={\pgfextra \fill (\tikzlastnode) circle[radius=.2em]; \endpgfextra}}]
\begin{scope}[every node/.style={circle, fill=black, inner sep=.5mm, outer sep=0}]
\node (1) {};
\node[right=3mm of 1] (2) {};
\node[right=3mm of 2] (3) {};
\node[right=3mm of 3] (4) {};
      \end{scope}
      \begin{scope}
        [thick, rounded corners=8pt]
        \draw[green]
        (1)-- ($(2) - (0,3mm)$)--($(3) - (0,3mm)$)-- (4); 
        \draw[red,dotted] (2)-- (3)-- (4); 
      \end{scope}
      \end{tikzpicture}}="2341",
( 36,  0) *+{\begin{tikzpicture}
      [ mycell/.style={draw, minimum size=1em},
        dot/.style={mycell,
            append after command={\pgfextra \fill (\tikzlastnode) circle[radius=.2em]; \endpgfextra}}]
\begin{scope}[every node/.style={circle, fill=black, inner sep=.5mm, outer sep=0}]
\node (1) {};
\node[right=3mm of 1] (2) {};
\node[right=3mm of 2] (3) {};
\node[right=3mm of 3] (4) {};
      \end{scope}
      \begin{scope}
        [thick, rounded corners=8pt]
        \draw[green]
        (1)-- (2)-- (3); 
        \draw[red,dotted] (1)-- ($(2) - (0,3mm)$)-- ($(3) - (0,3mm)$)-- (4); 
      \end{scope}
      \end{tikzpicture}}="3214",
( 12,  0) *+{\begin{tikzpicture}
      [ mycell/.style={draw, minimum size=1em},
        dot/.style={mycell,
            append after command={\pgfextra \fill (\tikzlastnode) circle[radius=.2em]; \endpgfextra}}]
\begin{scope}[every node/.style={circle, fill=black, inner sep=.5mm, outer sep=0}]
\node (1) {};
\node[right=3mm of 1] (2) {};
\node[right=3mm of 2] (3) {};
\node[right=3mm of 3] (4) {};
      \end{scope}
      \begin{scope}
        [thick, rounded corners=8pt]
        \draw[green]
        (1)-- ($(2) + (0,3mm)$)-- (3); 
          \draw[green] (2)-- ($(3) - (0,3mm)$)-- (4); 
           \draw[red,dotted](1)-- ($(2) + (0,2mm)$)-- ($(3) - (0,2mm)$)-- (4); 
      \end{scope}
      \end{tikzpicture}}="3142",
(-12,  0) *+{\begin{tikzpicture}
      [ mycell/.style={draw, minimum size=1em},
        dot/.style={mycell,
            append after command={\pgfextra \fill (\tikzlastnode) circle[radius=.2em]; \endpgfextra}}]
\begin{scope}[every node/.style={circle, fill=black, inner sep=.5mm, outer sep=0}]
\node (1) {};
\node[right=3mm of 1] (2) {};
\node[right=3mm of 2] (3) {};
\node[right=3mm of 3] (4) {};
      \end{scope}
      \begin{scope}
        [thick, rounded corners=8pt]
         \draw[green] (1)-- ($(2) - (0,2mm)$)-- ($(3) + (0,2mm)$)-- (4); 
          \draw[red,dotted](2)-- ($(3) + (0,4mm)$)-- (4);
           \draw[red,dotted](1)-- ($(2) - (0,4mm)$)-- (3);
      \end{scope}
      \end{tikzpicture}}="2413",
(-36,  0) *+{\begin{tikzpicture}
      [ mycell/.style={draw, minimum size=1em},
        dot/.style={mycell,
            append after command={\pgfextra \fill (\tikzlastnode) circle[radius=.2em]; \endpgfextra}}]
\begin{scope}[every node/.style={circle, fill=black, inner sep=.5mm, outer sep=0}]
\node (1) {};
\node[right=3mm of 1] (2) {};
\node[right=3mm of 2] (3) {};
\node[right=3mm of 3] (4) {};
      \end{scope}
      \begin{scope}
        [thick, rounded corners=8pt]
        \draw[green]
        (2)-- (3)-- (4); 
         \draw[red,dotted](1)-- ($(2) + (0,3mm)$)-- ($(3) + (0,3mm)$)-- (4);
      \end{scope}
      \end{tikzpicture}}="1432",
(-60,  0) *+{\begin{tikzpicture}
      [ mycell/.style={draw, minimum size=1em},
        dot/.style={mycell,
            append after command={\pgfextra \fill (\tikzlastnode) circle[radius=.2em]; \endpgfextra}}]
\begin{scope}[every node/.style={circle, fill=black, inner sep=.5mm, outer sep=0}]
\node (1) {};
\node[right=3mm of 1] (2) {};
\node[right=3mm of 2] (3) {};
\node[right=3mm of 3] (4) {};
      \end{scope}
      \begin{scope}
        [thick, rounded corners=8pt]
        \draw[green] (1)-- ($(2) + (0,3mm)$)-- ($(3) + (0,3mm)$)-- (4) ; 
         \draw[red,dotted] (1)-- (2);
          \draw[red,dotted] (2)-- (3);
      \end{scope}
      \end{tikzpicture}}="4123",
( 48, 12) *+{\begin{tikzpicture}
      [ mycell/.style={draw, minimum size=1em},
        dot/.style={mycell,
            append after command={\pgfextra \fill (\tikzlastnode) circle[radius=.2em]; \endpgfextra}}]
\begin{scope}[every node/.style={circle, fill=black, inner sep=.5mm, outer sep=0}]
\node (1) {};
\node[right=3mm of 1] (2) {};
\node[right=3mm of 2] (3) {};
\node[right=3mm of 3] (4) {};
      \end{scope}
      \begin{scope}
        [thick, rounded corners=8pt]
        \draw[green]        (2)-- (3); 
        \draw[green] (1)-- ($(2) - (0,3mm)$)-- ($(3) - (0,3mm)$)-- (4);
        \draw[red,dotted] (2)-- ($(3) - (0,2mm)$)-- (4);
      \end{scope}
      \end{tikzpicture}}="3241",
( 24, 12) *+{\begin{tikzpicture}
      [ mycell/.style={draw, minimum size=1em},
        dot/.style={mycell,
            append after command={\pgfextra \fill (\tikzlastnode) circle[radius=.2em]; \endpgfextra}}]
\begin{scope}[every node/.style={circle, fill=black, inner sep=.5mm, outer sep=0}]
\node (1) {};
\node[right=3mm of 1] (2) {};
\node[right=3mm of 2] (3) {};
\node[right=3mm of 3] (4) {};
      \end{scope}
      \begin{scope}
        [thick, rounded corners=8pt]
        \draw[green]   (3)-- (4);
     \draw[green] (1)-- ($(2) - (0,3mm)$)--  (3); 
     \draw[red,dotted] (2)-- ($(3) + (0,2mm)$)-- (4);
      \end{scope}
      \end{tikzpicture}}="2431",
(  0, 12) *+{\begin{tikzpicture}
      [ mycell/.style={draw, minimum size=1em},
        dot/.style={mycell,
            append after command={\pgfextra \fill (\tikzlastnode) circle[radius=.2em]; \endpgfextra}}]
\begin{scope}[every node/.style={circle, fill=black, inner sep=.5mm, outer sep=0}]
\node (1) {};
\node[right=3mm of 1] (2) {};
\node[right=3mm of 2] (3) {};
\node[right=3mm of 3] (4) {};
      \end{scope}
      \begin{scope}
        [thick, rounded corners=8pt]
         \draw[green] (1)-- ($(2) + (0,3mm)$)-- ($(3) - (0,3mm)$)-- (4);
         \draw[red,dotted] (1)-- (2);
         \draw[red,dotted] (3)-- (4);
      \end{scope}
      \end{tikzpicture}}="3412",
(-24, 12) *+{\begin{tikzpicture}
      [ mycell/.style={draw, minimum size=1em},
        dot/.style={mycell,
            append after command={\pgfextra \fill (\tikzlastnode) circle[radius=.2em]; \endpgfextra}}]
\begin{scope}[every node/.style={circle, fill=black, inner sep=.5mm, outer sep=0}]
\node (1) {};
\node[right=3mm of 1] (2) {};
\node[right=3mm of 2] (3) {};
\node[right=3mm of 3] (4) {};
      \end{scope}
      \begin{scope}
        [thick, rounded corners=8pt]
        \draw[green]   (1)-- (2);
        \draw[green] (2)-- ($(3) + (0,3mm)$)-- (4);
        \draw[red,dotted] (1)-- ($(2) - (0,2mm)$)-- (3);
      \end{scope}
      \end{tikzpicture}}="4213",
(-48, 12) *+{\begin{tikzpicture}
      [ mycell/.style={draw, minimum size=1em},
        dot/.style={mycell,
            append after command={\pgfextra \fill (\tikzlastnode) circle[radius=.2em]; \endpgfextra}}]
\begin{scope}[every node/.style={circle, fill=black, inner sep=.5mm, outer sep=0}]
\node (1) {};
\node[right=3mm of 1] (2) {};
\node[right=3mm of 2] (3) {};
\node[right=3mm of 3] (4) {};
      \end{scope}
      \begin{scope}
        [thick, rounded corners=8pt]
        \draw[green]       (2)-- (3);
         \draw[green] (1)-- ($(2) + (0,3mm)$)--($(3) + (0,3mm)$)-- (4);
         \draw[red,dotted] (1)-- ($(2) + (0,2mm)$)-- (3);
      \end{scope}
      \end{tikzpicture}}="4132",
( 24, 24) *+{\begin{tikzpicture}
      [ mycell/.style={draw, minimum size=1em},
        dot/.style={mycell,
            append after command={\pgfextra \fill (\tikzlastnode) circle[radius=.2em]; \endpgfextra}}]
\begin{scope}[every node/.style={circle, fill=black, inner sep=.5mm, outer sep=0}]
\node (1) {};
\node[right=3mm of 1] (2) {};
\node[right=3mm of 2] (3) {};
\node[right=3mm of 3] (4) {};
      \end{scope}
      \begin{scope}
        [thick, rounded corners=8pt]
        \draw[green]
        (1)-- (2); 
        \draw[green] (2)-- ($(3) - (0,3mm)$)-- (4);
        \draw[red,dotted] (3)-- (4);
      \end{scope}
      \end{tikzpicture}}="3421",
(  0, 24) *+{\begin{tikzpicture}
      [ mycell/.style={draw, minimum size=1em},
        dot/.style={mycell,
            append after command={\pgfextra \fill (\tikzlastnode) circle[radius=.2em]; \endpgfextra}}]
\begin{scope}[every node/.style={circle, fill=black, inner sep=.5mm, outer sep=0}]
\node (1) {};
\node[right=3mm of 1] (2) {};
\node[right=3mm of 2] (3) {};
\node[right=3mm of 3] (4) {};
      \end{scope}
      \begin{scope}
        [thick, rounded corners=8pt]
        \draw[green] (2)-- ($(3) + (0,3mm)$)-- (4);
        \draw[green] (1)-- ($(2) - (0,3mm)$)-- (3);
        \draw[red,dotted] (2)-- (3);
      \end{scope}
      \end{tikzpicture}}="4231",
(-24, 24) *+{\begin{tikzpicture}
      [ mycell/.style={draw, minimum size=1em},
        dot/.style={mycell,
            append after command={\pgfextra \fill (\tikzlastnode) circle[radius=.2em]; \endpgfextra}}]
\begin{scope}[every node/.style={circle, fill=black, inner sep=.5mm, outer sep=0}]
\node (1) {};
\node[right=3mm of 1] (2) {};
\node[right=3mm of 2] (3) {};
\node[right=3mm of 3] (4) {};
      \end{scope}
      \begin{scope}
        [thick, rounded corners=8pt]
        \draw[green] (3)-- (4);
        \draw[green] (1)-- ($(2) + (0,3mm)$)-- (3);
        \draw[red,dotted] (1)-- (2);
      \end{scope}
      \end{tikzpicture}}="4312",
(  0, 36) *+{\begin{tikzpicture}
      [ mycell/.style={draw, minimum size=1em},
        dot/.style={mycell,
            append after command={\pgfextra \fill (\tikzlastnode) circle[radius=.2em]; \endpgfextra}}]
\begin{scope}[every node/.style={circle, fill=black, inner sep=.5mm, outer sep=0}]
\node (1) {};
\node[right=3mm of 1] (2) {};
\node[right=3mm of 2] (3) {};
\node[right=3mm of 3] (4) {};
      \end{scope}
      \begin{scope}
        [thick, rounded corners=8pt]
        \draw[green]
        (1)-- (2)-- (3)--(4); 
      \end{scope}
      \end{tikzpicture}}="4321",
\ar "2134";"1234"
\ar "1324";"1234"
\ar "1243";"1234"
\ar "2314";"2134"
\ar|\hole "2143";"2134"
\ar "3124";"1324"
\ar "1342";"1324"
\ar|\hole "2143";"1243"
\ar "1423";"1243"
\ar "2341";"2314"
\ar "3214";"2314"
\ar "3214";"3124"
\ar "3142";"3124"
\ar|\hole "2413";"2143"
\ar "3142";"1342"
\ar "1432";"1342"
\ar "1432";"1423"
\ar "4123";"1423"
\ar "3241";"2341"
\ar|\hole "2431";"2341"
\ar "3241";"3214"
\ar "3412";"3142"
\ar|\hole "2431";"2413"
\ar "4213";"2413"
\ar "4132";"1432"
\ar|\hole "4213";"4123"
\ar "4132";"4123"
\ar "3421";"3241"
\ar|\hole "4231";"2431"
\ar "3421";"3412"
\ar "4312";"3412"
\ar|\hole "4231";"4213"
\ar "4312";"4132"
\ar "4321";"3421"
\ar "4321";"4231"
\ar "4321";"4312"
\end{xy}.
\end{align*}  
\end{exam}

The following lemma is straightforward. 

\begin{prop}\label{comm}
Let $s_i=(i\ i+1)$ be the transposition of $W$. Then the left action of $s_i$ on $w$ is compatible with  mutation $\mu_i$, 
that is, we have the following commutative diagram 

\begin{align*}
\begin{xy}
( 0,  8) *+{\DAD}   ="01",
( 0, -8) *+{\DAD}    ="00",
(-45,  8) *+{W}   ="02",
(-45,  -8) *+{W}   ="12",
\ar^{\mu_i}   "01";"00"
\ar^{\D} "02";"01"
\ar^{\D} "12";"00"
\ar_{s_i\cdot}  "02";"12"
\end{xy}
\end{align*}
In particular, we have $\mu_{i}(\D(w))\in\DAD$ and 
$\mu^-_i\circ\mu^+_i=\id=\mu^+_i\circ\mu^-_i$.
\end{prop}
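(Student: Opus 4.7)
The plan is to verify the commutativity by a direct case analysis, comparing $\D(s_iw)$ with $\mu_i(\D(w))$ component by component. Without loss of generality assume $w_i>w_{i+1}$, so $\alpha_{w_iw_{i+1}}$ is green in $\D(w)$ and $\mu_i=\mu_i^-$ applies; the case $w_i<w_{i+1}$ is formally dual via $\mu_i^+$. The key observation is that $s_iw$ differs from $w$ only by transposing the entries at positions $i$ and $i+1$.

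First I would dispose of the easy clauses. For each $j\notin\{i-1,i,i+1\}$, the pair $(w_j,w_{j+1})$ is unchanged by $s_i$, so the corresponding arc and its color in $\D(s_iw)$ agree with those in $\D(w)$, matching clause~(1) of Definition~\ref{muta}. For the arc at position $i$ itself: under $s_i$ the two values $w_i,w_{i+1}$ simply switch positions, so the pair of points $\{(i,w_i),(i+1,w_{i+1})\}$ in the plane of Definition~\ref{def dad} becomes $\{(i,w_{i+1}),(i+1,w_i)\}$. These determine the same line segment, hence the same geometric arc after the rotation defining $\D$; only its color flips from green to red because $w_{i+1}<w_i$ in the new order, matching clause~(2).

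The substantive content is at positions $i-1$ and $i+1$. Consider position $i-1$: in $\D(s_iw)$ this arc joins $w_{i-1}$ with $w_{i+1}$, sitting at consecutive positions $i-1,i$ of $s_iw$. I would compare it with $\sigma(\alpha_{w_{i-1}w_i};\alpha_{w_iw_{i+1}})$ by splitting on the three orderings of $\{w_{i-1},w_i,w_{i+1}\}$ compatible with $w_i>w_{i+1}$. The ordering $w_{i-1}>w_i>w_{i+1}$ places $w_i$ simultaneously as the right endpoint of $\alpha_{w_iw_{i+1}}$ and the left endpoint of $\alpha_{w_{i-1}w_i}$, giving subcase (1-i) of $\sigma$; the ordering $w_i>w_{i-1}>w_{i+1}$ gives $L(\alpha_{w_iw_{i+1}})<L(\alpha_{w_{i-1}w_i})$, falling into subcase (1-iii); and $w_i>w_{i+1}>w_{i-1}$ gives $L(\alpha_{w_iw_{i+1}})>L(\alpha_{w_{i-1}w_i})$, falling into subcase (1-ii). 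In each case a local picture check shows that the output of $\sigma$ agrees with the arc from $w_{i-1}$ to $w_{i+1}$ produced by the construction of $\D$ applied to $s_iw$, because $\sigma$ is defined to realize precisely this local half-twist. Position $i+1$ is handled in parallel fashion using subcases (2-i)--(2-iii).

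The color assignment is then immediate. Since $w_{i-1}$ and $w_{i+1}$ now occupy consecutive positions $i-1,i$ of $s_iw$, the new arc at position $i-1$ is red iff $w_{i-1}<w_{i+1}$. Under the standing hypothesis $w_i>w_{i+1}$, this condition is exactly the complement of ``$w_{i-1}>w_i$ or $w_i>w_{i-1}>w_{i+1}$'', which matches the prescription of $c_{i-1}'$ in Definition~\ref{muta}; an analogous verification yields $c_{i+1}'$. The main obstacle is purely bookkeeping: one must match six geometric subcases of $\sigma$ against three color conditions at each of the two affected positions, and also treat the boundary cases $i=1$ and $i=n$ in which one of $i-1,i+1$ lies outside $\{1,\ldots,n\}$ so that no transformation is required there. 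Each individual case reduces to a small local picture check, made straightforward by the fact that $\sigma$ was engineered precisely to record the local geometric change of $\D$ under a single transposition.
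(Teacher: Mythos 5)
Your proposal is correct and follows exactly the route the paper takes: the paper's proof is the one-line assertion that one "can easily check the statement by comparing $\D(s_iw)$ and $\mu_i(\D(w))$", and your write-up simply carries out that comparison in full (unchanged positions, the color flip at position $i$, and the matching of the three orderings of $w_{i-1},w_i,w_{i+1}$ against the subcases of $\sigma$ and the conditions defining $c_{i-1}'$, $c_{i+1}'$). Your case matching and the complementarity of the color conditions check out, so this is a valid, more detailed version of the paper's own argument.
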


\begin{proof}
Let $w=w_1\cdots w_{n+1}$ and $$\D(w) = \{\alpha_{w_1w_{2}}(c_1),\alpha_{w_2w_{3}}(c_2),\cdots,\alpha_{w_{n}w_{n+1}}(c_{n})\}.$$
Let $u:=s_iw=u_1\cdots u_{n+1}$ and 
$$\D(u) = \{\alpha_{u_1u_{2}}(d_1),\alpha_{u_2u_{3}}(d_2),\cdots,\alpha_{u_{n}u_{n+1}}(d_{n})\}.$$ 
We will show that 
$$\{\alpha_{u_ku_{k+1}}(d_k)\ |\ 1\leq k\leq n\}=\mu_i(\{\alpha_{w_kw_{k+1}}(c_k))\ |\ 1\leq k\leq n\}).$$
It is also enough to show that $\alpha_{u_ku_{k+1}}(d_k)=\mu_i(\alpha_{w_kw_{k+1}}(c_k))$ for any $1\leq k\leq n.$

Since 
$u_1\cdots u_{n+1}=w_1w_2\cdots w_{i-1}w_{i+1}w_iw_{i+2}\cdots w_{n+1}$, 
we have 
$$\alpha_{u_ku_{k+1}}(d_k)=\alpha_{w_kw_{k+1}}(c_k)=\mu_i(\alpha_{w_kw_{k+1}}(c_k))$$
for any $k\neq i-1,i,i+1$.
By case-by-case analysis, we can easily check the case of $k=i-1$,$k=i$ and $k=i+1$.
\end{proof}

By Propositions \ref{h^0} and \ref{comm}, we obtain the following corollary. 

\begin{cor}\label{dad partial order}
Let $\D,\D'\in\DAD$. 
We write $\D\leq\D'$ if $D$ is obtained from $\D'$ by a sequence of left mutation. Then it gives a partial order on $\DAD$ and it is isomorphic to $W$ as posets. 
\end{cor}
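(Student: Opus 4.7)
The plan is to transport the weak order on $W$ to $\DAD$ via the bijection $\D\colon W\to\DAD$ from Proposition \ref{h^0}, and then verify, using Proposition \ref{comm}, that the transported order coincides with the one generated by left mutations.

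First, I would recast the cover relations of the weak order in a form matching left mutation. By the paper's definition, $w\lessdot v$ in the weak order means $v=s_iw$ with $v_i>v_{i+1}$ at some position $i$. The inequality $v_i>v_{i+1}$ is exactly the condition that the arc $\alpha_{v_iv_{i+1}}$ appearing in $\D(v)$ is green, which is precisely when the left mutation $\mu_i^{-}$ is defined on $\D(v)$. Proposition \ref{comm} then gives $\D(w)=\D(s_iv)=\mu_i^{-}(\D(v))$. Thus each cover $w\lessdot v$ in $W$ corresponds under $\D$ to a single left mutation from $\D(v)$ to $\D(w)$; conversely, every left mutation arises this way since each $\mu_i^{-}$ is defined only at a position where the arc is green, i.e., where $s_i$ is a left descent of the underlying permutation.

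Next, since the weak order on $W$ is a partial order generated as the transitive closure of its cover relations, $w\le w'$ in $W$ is equivalent to the existence of a chain of covers
$$w=w^{(0)}\lessdot w^{(1)}\lessdot\cdots\lessdot w^{(k)}=w'.$$
By the previous paragraph, such a chain translates into a sequence of left mutations transforming $\D(w')$ into $\D(w)$, which is exactly the defining condition $\D(w)\le\D(w')$ in the statement. Hence $\D$ intertwines the weak order on $W$ with the mutation relation on $\DAD$.

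Since $(W,\le)$ is a partial order and $\D$ is a bijection (Proposition \ref{h^0}), the mutation relation on $\DAD$ inherits reflexivity, antisymmetry, and transitivity, proving that it is a partial order; the bijection $\D$ is then a poset isomorphism by construction. The main point to be careful about is the bookkeeping of orientations, namely verifying that a left mutation (defined exactly at a green arc) corresponds to descending one step in the weak order rather than ascending; once this matching of cover relations is established, the rest follows formally from Proposition \ref{comm} together with the fact that the weak order is generated by covers.
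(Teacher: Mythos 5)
Your proposal is correct and is essentially the paper's argument: the paper derives this corollary directly from Proposition \ref{h^0} (the bijection $\D\colon W\to\DAD$) and Proposition \ref{comm} (compatibility of the $s_i$-action with $\mu_i$), which is exactly the transport-of-structure argument you carry out, with the cover-relation bookkeeping made explicit.
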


Moreover we define a partial order on $\NAD$ as follows. 

\begin{defi}
Let $\G,\G'\in\NAD$. We define a partial order on $\NAD$ by 
$\G\leq\G'$ if $\mathbb{G}^{-1}(\G)\leq\mathbb{G}^{-1}(\G')$. In particular, $\NAD\cong\DAD\cong W$.
\end{defi}



Next we will show the compatibility of mutation of $\DAD$ and $\twosmc\Pi$.
For this purpose, we recall mutation of simple minded collections based on \cite{KY,BY}. 

\begin{defi}\label{mut simple}\cite{KY}
Let $A$ be a finite dimensional algebra such that $|A|=n$. 
Let $\xX:=\{X_1,X_2,\cdots,X_{n}\}$ be a SMC of $\Db(\mod A).$ 
Fix $1\leq i\leq n$. 
We define a left mutation of SMCs 
$$\mu_i^-(\xX):=\{\mu_i^-(X_1),\mu_i^-(X_2),\cdots,\mu_i^-(X_{n})\}\subset\Db(\mod A)$$ as follows. 
Let $\E:=\Filt{X_i}$ be the extension closure of $X_i$ of $\Db(\mod A).$
Take a minimal left $\E$-approximation of $X_k[-1]$
\[\xymatrix{
X_k[-1] \ar[r]^{\ \ \ 
f_k}& E_k .
}\]
Then we define $\mu_i^-(X_k)=\mathrm{cone}(f_k)$, where $\mathrm{cone}(f_k)$ is the mapping cone of $f_k$. 
Dually we can define the right mutation $\mu^+_i$ and 
we have $\mu_i^+\circ\mu_i^-=\id$ and $\mu_i^-\circ\mu_i^+=\id$. We denote by $\mu_i$ a left or right mutation.
\end{defi}

Even if $\xX$ is 2-term, $\mu_i^-(\xX)$ is not necessary 2-term in general. With regard to this fact, we have the following nice characterization of 2-term  SMCs \cite[Subsection 3.7]{BY}.

\begin{lemm}\cite{BY}\label{mut 2-simple}
Let $\xX:=\{X_1,X_2,\cdots,X_n\}$ be a 2-term SMCs of $\Db(\mod A).$
\begin{itemize}
\item[(1)] We have $\mu_i^-(\xX)\in\twosmc A$ 
if and only if we have $X_i \in \mod A$. 
\item[(2)]  If the above equivalent condition (1) holds, then 
 $\mu_i^-(\xX)$ has the following description.
\begin{itemize}
\item[(i)] We have 
$$\mu_i^-(X_i)=X_i[1].$$
\item[(ii)] Assume $X_k\in\mod A$ $(k\neq i)$. 
Then we have $\mu_i^-(X_k)=\mathrm{cone}(f_k)\in \mod A$ and 
we have an exact sequence $0 \to E_k \to \mathrm{cone}(f_k) \to X_k \to 0$ in $\mod A$. 
\item[(iii)] Assume $X_k\in\mod A[1]$.  
Then we have one of the following two cases:

(Case1). We have an exact sequence in $\mod A$
$$\xymatrix{0\ar[r]&
X_k[-1] \ar[r]^{\ \ f_k}
& E_k \ar[r] & \Coker(f_k)\ar[r]&0.
}$$ 
In this case, we have $\mu_i^-(X_k)=\Coker(f_k)$.

(Case2).We have an exact sequence in $\mod A$
$$\xymatrix{0\ar[r]&
\ker(f_k)\ar[r]&
X_k[-1] \ar[r]^{\ \ f_k}
& E_k \ar[r] \ar[r]&0.
}$$ 
In this case, we have $\mu_i^-(X_k)=\ker(f_k)[1]$.

\end{itemize}
\end{itemize}
\end{lemm}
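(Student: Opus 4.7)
The plan is to apply the cohomology functor $H^*\colon \Db(\mod A)\to \mod A$ to the defining triangle $X_j[-1]\xrightarrow{f_j} E_j\to \mu_i^-(X_j)\to X_j$ associated to each minimal left $\Filt X_i$-approximation, and to read off the module-theoretic description case by case. The principal input is Remark \ref{rem}: elements of a 2-term SMC lie in $\mod A\cup\mod A[1]$, and in the interesting direction where $X_i\in\mod A$ we have $E_j\in\Filt X_i\subseteq\mod A$, so the first two terms of the triangle sit in a very restricted cohomological range.

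First I record (2)(i) by direct inspection of the mutation construction applied to $X_i$ itself: $\mu_i^-(X_i)=X_i[1]$, as in standard Keller--Yang SMC mutation. This unconditional formula already yields the ``only if'' direction of (1), since $X_i\in\mod A[1]$ would force $\mu_i^-(X_i)=X_i[1]\in\mod A[2]$, violating the 2-term property. Assuming now $X_i\in\mod A$, for (2)(ii) the module $X_j\in\mod A$ makes $X_j[-1]$ concentrated in degree $1$ while $E_j$ sits in degree $0$; the morphism $f_j$ represents an element of $\Ext^1_A(X_j,E_j)$, and the long exact cohomology sequence for the triangle collapses to $0\to E_j\to H^0(\mathrm{cone}(f_j))\to X_j\to 0$ with all other cohomology vanishing, giving the claimed short exact sequence. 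For (2)(iii), both $X_j[-1]$ and $E_j$ lie in $\mod A$ and $f_j$ is an honest morphism of modules; its cone has $H^{-1}=\Ker(f_j)$ and $H^0=\Coker(f_j)$, and the two listed subcases correspond precisely to $f_j$ being injective or surjective. Combining (i), (ii), and (iii), each $\mu_i^-(X_j)$ lies in $\mod A\cup\mod A[1]$, yielding the ``if'' direction of (1).

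The main obstacle is the dichotomy in (iii): ruling out the ``mixed'' case where $\Ker(f_j)$ and $\Coker(f_j)$ are simultaneously nonzero. The argument must combine the minimality of $f_j$ as a left $\Filt X_i$-approximation with the Keller--Yang fact that $\mu_i^-(\xX)$ is always an SMC, even before one verifies the 2-term property. If the cone had cohomology in two consecutive degrees, one invokes the $X_i$-filtration of $E_j$ together with the Hom-vanishing axioms (sm1)--(sm3) for the mutated collection to factor $f_j$ through a proper subapproximation, contradicting minimality. Executing this argument carefully, without circularly assuming the 2-term property we are trying to prove, is the technical heart.
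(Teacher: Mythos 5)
The paper itself does not prove this lemma; it cites \cite{BY}, so I will compare your argument to what the cited proof should be. Your handling of (2)(i), (2)(ii), the cohomology computation in (2)(iii), and the ``only if'' direction of (1) is correct. But there is a genuine gap at exactly the spot you flag as the technical heart: the dichotomy in (2)(iii). The minimality-and-factoring argument you sketch does not obviously go through, because a proper submodule of $E_j\in\Filt X_i$ (in particular $\Image(f_j)$) need not lie in $\Filt X_i$, so one cannot simply factor $f_j$ through a smaller approximation in $\E$. You also leave the circularity concern unresolved, and your statement that ``combining (i), (ii), and (iii), each $\mu_i^-(X_j)$ lies in $\mod A\cup\mod A[1]$'' is itself circular, since it invokes the dichotomy of (iii) to establish 2-term-ness.

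The intended route (both this lemma and Remark \ref{rem} are attributed to \cite{BY}) decouples the two claims. First prove the ``if'' direction of (1) directly from the cohomology long exact sequence: when $X_i\in\mod A$, the cone $\mathrm{cone}(f_j)$ always has $H^k=0$ for $k\neq -1,0$ (its only possible nonzero cohomologies are $\Ker f_j$ and $\Coker f_j$), so $\mu_i^-(\xX)$ consists of 2-term complexes; combined with the Koenig--Yang fact that $\mu_i^-(\xX)$ is an SMC, this already gives $\mu_i^-(\xX)\in\twosmc A$, with no dichotomy needed. Only \emph{then} apply Remark \ref{rem} to the 2-term SMC $\mu_i^-(\xX)$: each of its elements lies in $\mod A$ or $\mod A[1]$, which forces $\Ker f_j=0$ or $\Coker f_j=0$ in case (iii). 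This cleanly avoids both the circularity and the dubious minimality argument. I would replace your final paragraph with this reasoning.
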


The following proposition is a key result
of this section.

\begin{prop}\label{main2}
The map $\Psi:\DAD\to\twosmc\Pi$ is well-defined and 
the mutation of $\DAD$ and $\twosmc\Pi$ are compatible, that is, we have the following commutative diagram 

\begin{align*}
\begin{xy}
( 0,  8) *+{\DAD}   ="01",
(45,  8) *+{\twosmc\Pi }   ="11",
( 0, -8) *+{\DAD}    ="00",
(45, -8) *+{\twosmc\Pi}     ="10",
\ar^{\Psi} "01";"11"
\ar^{\Psi}  "00";"10"
\ar_{\mu_i}   "01";"00"
\ar^{\mu_i}   "11";"10"
\end{xy}
\end{align*}
\end{prop}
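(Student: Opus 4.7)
The plan is to prove both assertions simultaneously by induction on the length $\ell(w)$ in the weak order on $W$, the inductive hypothesis at $w$ being that $\Psi(\D(w)) \in \twosmc\Pi$ and the inductive step establishing, for each $s_i$ with $s_i w < w$, both the compatibility identity $\mu_i^-(\Psi(\D(w))) = \Psi(\mu_i^-(\D(w)))$ and well-definedness at $s_i w$. Compatibility with right mutation $\mu_i^+$ then follows since $\mu_i^+ = (\mu_i^-)^{-1}$. For the base case $w = e$, every pair $(w_i, w_{i+1}) = (i, i+1)$ is increasing, so every arc of $\D(e)$ is a short red arc $\alpha_{i, i+1}$ whose arrow sequence is the idempotent $e_{v_i}$; thus $S(\alpha_{i, i+1}) = S_{v_i}$ and $\Psi(\D(e)) = \{S_{v_1}[1], \ldots, S_{v_n}[1]\}$, the well-known shifted-simple 2-term SMC.

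For the inductive step, choose $s_i$ with $s_i w < w$, so that $w_i > w_{i+1}$, $\alpha_{w_i w_{i+1}}$ is green, and $X_i := S(\alpha_{w_i w_{i+1}})$ lies in $\mod\Pi$. By Lemma \ref{mut 2-simple}(1), $\mu_i^-(\Psi(\D(w)))$ is automatically a 2-term SMC, so only the position-by-position identity $\mu_i^-(X_k) = \Psi(\mu_i^-(\alpha_{w_k w_{k+1}}(c_k)))$ needs to be verified for $k \in \{1, \ldots, n\}$. For $k = i$, Lemma \ref{mut 2-simple}(2)(i) gives $\mu_i^-(X_i) = X_i[1]$, matching the combinatorial color change from green to red. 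For $|k - i| > 1$, the arcs $\alpha_{w_k w_{k+1}}$ and $\alpha_{w_i w_{i+1}}$ share no endpoint (since $\{w_k, w_{k+1}\} \cap \{w_i, w_{i+1}\} = \emptyset$) and do not cross by Lemma \ref{basic proper}(2); Proposition \ref{noncrossing-hom} and Lemma \ref{ext lemm} then force both $\Hom_\Pi$ and $\Ext^1_\Pi$ between $S(\alpha_{w_k w_{k+1}})$ and $S(\alpha_{w_i w_{i+1}})$ to vanish in each direction, so the minimal left $\Filt\{X_i\}$-approximation of $X_k[-1]$ is zero and $\mu_i^-(X_k) = X_k$, matching the combinatorial rule.

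The main obstacle is the adjacent case $k \in \{i-1, i+1\}$; by symmetry focus on $k = i-1$. The arcs $\alpha_{w_{i-1}w_i}$ and $\alpha_{w_iw_{i+1}}$ share exactly the endpoint $w_i$, so by Lemma \ref{hom lemm} together with Theorem \ref{CWthm} the total hom space between $X_{i-1}$ (or its shift down) and $X_i$ is one-dimensional, generated by an explicit graph map. This identifies the minimal $\Filt\{X_i\}$-approximation $f_{i-1}\colon X_{i-1}[-1] \to E_{i-1}$ with $E_{i-1} \in \{0, X_i\}$. One then computes $\mathrm{cone}(f_{i-1})$ via the appropriate short exact sequence of Lemma \ref{mut 2-simple}(2) and identifies the result with $S(\sigma(\alpha_{w_{i-1}w_i}; \alpha_{w_iw_{i+1}}))[c_{i-1}']$. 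The verification splits into the three configurations (1-i)--(1-iii) of $\sigma$ arising for $k = i-1$ (and the symmetric (2-i)--(2-iii) for $k = i+1$), each further divided according to the original color $c_{i-1}$. The color prescription in Definition \ref{muta} is calibrated precisely so that $c_{i-1}' = 0$ corresponds to the cone lying in $\mod\Pi$ (either the middle-term extension of Lemma \ref{mut 2-simple}(2)(ii) when $X_{i-1} \in \mod\Pi$, or the cokernel branch Case 1 of Lemma \ref{mut 2-simple}(2)(iii) when $X_{i-1} \in \mod\Pi[1]$), while $c_{i-1}' = 1$ corresponds to the kernel-shift branch Case 2 of Lemma \ref{mut 2-simple}(2)(iii); in each sub-case the arrow sequence of the half-twist $\sigma$ can be read off from the short exact sequence and matches that of the resulting cone, yielding the asserted equality.
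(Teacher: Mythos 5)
Your proposal follows the same route as the paper's proof: verify the identity $\mu_i^-(\Psi(\D(w)))=\Psi(\mu_i^-(\D(w)))$ term by term, treating $k=i$, $|k-i|>1$ and $k=i\pm1$ separately via the Hom/Ext computations for arc modules and Lemma \ref{mut 2-simple}, and deduce well-definedness by propagating from one known 2-term SMC. Two points need repair before this is airtight. First, the induction is anchored at $w=e$ but the step passes from $w$ to $s_iw<w$, i.e.\ it descends in the weak order; since $e$ is the minimum, the induction as stated never reaches any element other than $e$. The clean fix is to anchor at the longest element $w_0$, where $\D(w_0)$ consists of the $n$ short green arcs and $\Psi(\D(w_0))=\{S_{v_1},\dots,S_{v_n}\}$ is the standard SMC of simples, and descend by left mutations; this also removes the circularity in your appeal to $\mu_i^+=(\mu_i^-)^{-1}$, which presupposes that the image of the longer diagram is already known to be an SMC before Lemma \ref{mut 2-simple} and the inverse-mutation identity can be invoked. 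Second, in the adjacent case $k=i-1$ with $c_{i-1}=0$ (both arcs green), the space governing the approximation is $\Hom_{\Db(\mod\Pi)}(X_{i-1}[-1],X_i)=\Ext^1_\Pi(S(\alpha_{w_{i-1}w_i}),S(\alpha_{w_iw_{i+1}}))$; Lemma \ref{hom lemm} and Theorem \ref{CWthm} only control Hom spaces, so the one-dimensionality here must come from the bilinear-form identity of Proposition \ref{cw}, exactly as in step (3)(i) of the paper's proof. With these two repairs your argument coincides with the paper's.
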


\begin{proof}
Let 
$\D(w) = \{\alpha_{w_1w_{2}}(c_1),\alpha_{w_2w_{3}}(c_2),\cdots,\alpha_{w_{n}w_{n+1}}(c_{n})\}$. Fix $1\leq i\leq n.$ 
We will show the compatibility of left mutation. 
We assume that $w_i>w_{i+1}$, or equivalently, $c_i=0$ and 
we will show that 
$$\Psi(\mu_i^-(\{\alpha_{w_kw_{k+1}}(c_k)\}_{1\leq k\leq n}))=\mu_i^-(
\Psi(\{\alpha_{w_kw_{k+1}}(c_k)\}_{1\leq k\leq n})).
$$
In our notation, it is enough to show that
$\Psi(\mu_i^-(\alpha_{w_kw_{k+1}}(c_k)))=\mu_i^-(
\Psi(\alpha_{w_kw_{k+1}}(c_k)))$ for any $1\leq k\leq n$. 
Then, because of $\Psi(\D(\id))=\{S_1,S_2,\cdots,S_n\}\in\twosmc\Pi$ and Lemma \ref{mut 2-simple}, it also implies that  $\Psi:\DAD\to\twosmc\Pi$ is well-defined.

For simplicity, we write 
$S(\alpha_{w_kw_{k+1}})$ by $S(w_kw_{k+1})$.
Then we have 
\begin{eqnarray*}
\Psi(\D(w))=
\{S({w_1w_{2}})[c_1],S({w_2w_{3}})[c_2],\cdots,S({w_{n}w_{n+1}})[c_{n}]\}.
\end{eqnarray*}

Let $\E:=\Filt\{S({w_iw_{i+1}})\}$ be the extension closure of $S({w_iw_{i+1}})$. 
Then, by Proposition \ref{arc-brick bij} (2), we have 
$\E=\add\{S({w_iw_{i+1}})\}.$ 

\begin{itemize}
\item[(1)] Fix $1\leq k\leq n$ such that $k\neq i-1,i,i+1$. 
Then the two arcs $\alpha_{w_kw_{k+1}}$ and $\alpha_{w_iw_{i+1}}$ are noncrossing. 
If $c_k=0$, then Lemma \ref{ext lemm} implies that
$$\Hom_{\Db(\mod\Pi)}(S({w_kw_{k+1}})[-1],S({w_iw_{i+1}}))=\Ext^1_{\Pi}(S({w_kw_{k+1}}),S({w_iw_{i+1}}))=0.$$

If $c_k=1$, then Lemma \ref{hom lemm} implies that  
$$\Hom_{\Db(\mod\Pi)}(S({w_kw_{k+1}}),S({w_iw_{i+1}}))=0.$$

Therefore, in both cases, 
\[\xymatrix{
S({w_kw_{k+1}})[c_k-1] \ar[r]^(0.8){f_k\ \ }& 0,
}\]
is a left minimal $\mathcal{E}$-approximation and hence we have 
$$\mu_i^-(S({w_kw_{k+1}})[c_k])=\mathrm{cone}(f_k)=S({w_kw_{k+1}})[c_k]=\Psi(\mu_i^-(\alpha_{w_kw_{k+1}}(c_k))).$$ 
Thus we get 
$$\mu_i^-(\Psi(\alpha_{w_kw_{k+1}}(c_k)))=\Psi(\mu_i^-(\alpha_{w_kw_{k+1}}(c_k))).$$

\item[(2)] Since $c_i=0$, we have 
$$\mu_i^-(S({w_iw_{i+1}})[0])=S({w_iw_{i+1}})[1].$$
Thus we get 
$\mu_i^-(\Psi(\alpha_{w_iw_{i+1}}(c_i)))=\Psi(\mu_i^-(\alpha_{w_iw_{i+1}}(c_i))).$
\item[(3)] We will calculate $\mu_i^-(S({w_{i-1}w_{i}})[c_{i-1}])$. 

\begin{itemize}
\item[(i)] 
First assume $w_{i-1}>w_{i}$, or equivalently, $c_{i-1}=0$. 
Since the dimension vectors of 
$S({w_{i-1}w_{i}})$ and $S({w_{i}w_{i+1}})$ do not share any non-zero entry and they are adjacent, we have $(\underline{\dim} S({w_{i-1}w_{i}}),\underline{\dim} S({w_{i}w_{i+1}}))=-1$. 
On the other hand, we have 
$\Hom_\Pi(S({w_{i-1}w_{i}}),S({w_{i}w_{i+1}}))=0=\Hom_\Pi(S({w_{i}w_{i+1}}),S({w_{i-1}w_{i}}))$ 
by Proposition \ref{noncrossing-hom}. 
Thus, Proposition \ref{cw} implies  $$\dim\Ext_\Pi^1(S({w_{i-1}w_{i}}),S({w_{i}w_{i+1}}))=1.$$
Therefore there exists a unique (up to scalar) non-zero map 
\[\xymatrix{S({w_{i-1}w_{i}})[-1] \ar[r]^(0.55){f_{i-1}}& S({w_{i}w_{i+1}}), }\] which is a left minimal $\mathcal{E}$-approximation. 
Then, by Lemma \ref{mut 2-simple} (ii), we have  $\mathrm{cone}(f_{i-1})\in\mod\Pi$ which is given by the following non-split exact sequence 
$$0 \to S({w_{i}w_{i+1}}) \to \mathrm{cone}(f_{i-1}) \to S({w_{i-1}w_{i}}) \to 0.$$ 
Thus we get 
$\mathrm{cone}(f_{i-1})\cong S(\sigma(\alpha_{w_{i-1}w_{i}};\alpha_{w_{i}w_{i+1}}))=
\Psi(\mu_i^-(\alpha_{w_{i-1}w_{i}}(c_{i-1})))$. 
Therefore we obtain 
$\mu_i^-(\Psi(\alpha_{w_{i-1}w_{i}}(c_{i-1})))=\Psi(\mu_i^-(\alpha_{w_{i-1}w_{i}}(c_{i-1}))).$

\item[(ii)] 
Next assume $w_{i-1}<w_{i}$, or equivalently, $c_{i-1}=1$. 
As same as Lemma \ref{hom lemm} (2), if $w_{i-1}>w_{i+1}$, then we have a unique inclusion from $S({w_{i-1}w_{i}})$ to $S({w_{i}w_{i+1}})$ and 
if $w_{i-1}<w_{i+1}$, then we have a unique  surjection from $S({w_{i-1}w_{i}})$ to $S({w_{i}w_{i+1}})$. Thus, 
we have  $$\dim\Hom_\Pi(S({w_{i-1}w_{i}}),S({w_{i}w_{i+1}}))=1.$$
Thus there exists a unique 
non-zero map 
\[\xymatrix{
S({w_{i-1}w_{i}})[1][-1]\cong S({w_{i-1}w_{i}}) \ar[r]^(0.65){f_{i-1}}& S({w_{i}w_{i+1}}),
}\]
which is a left minimal $\mathcal{E}$-approximation.

{\bf (Case 1).}
Assume $w_{i-1}>w_{i+1}$. Then we have an exact sequence 
$$\xymatrix{0\ar[r]&
S({w_{i-1}w_{i}}) \ar[r]^{f_{i-1}}
& S({w_{i}w_{i+1}}) \ar[r] & \Coker(f_{i-1})\ar[r]&0.
}$$ 
By Lemma \ref{mut 2-simple} (iii), we have 
$$\mu_i^-(S({w_{i-1}w_{i}})[1])=\Coker(f_{i-1})=S(\sigma(\alpha_{w_{i-1}w_{i}};\alpha_{w_{i}w_{i+1}}))[c_{i-1}'].$$

{\bf (Case 2).}
Assume $w_{i-1}<w_{i+1}$. 
Then we have an exact sequence 
$$\xymatrix{0\ar[r]&\Ker(f_{i-1})\ar[r]&S({w_{i-1}w_{i}})\ar[r]^{{f_{i-1}}} 
& S({w_{i}w_{i+1}}) \ar[r] & 0.
}$$ 
By Lemma \ref{mut 2-simple} (iii), we have 
$$\mu_i^-(S({w_{i-1}w_{i}})[1])=\ker(f_{i-1})[1]=S(\sigma(\alpha_{w_{i-1}w_{i}};\alpha_{w_{i}w_{i+1}}))[c_{i-1}'].$$

Therefore, we obtain 
$\mu_i^-(\Psi(\alpha_{w_{i-1}w_{i}})[c_{i-1}])=\Psi(\mu_i^-(\alpha_{w_{i-1}w_{i}}(c_{i-1}))).$
\end{itemize}


\item[(4)] We can calculate $\mu_i^-(S({w_{i+1}w_{i+2}})[c_{i+1}])$ as same as (3) and we get $$\mu_i^-(\Psi(\alpha_{w_{i+1}w_{i+2}}(c_{i+1})))=\Psi(\mu_i^-(\alpha_{w_{i+1}w_{i+2}}(c_{i+1}))).$$ 
We leave to the reader the calculation. 

\end{itemize}
By (1),(2),(3) and (4), we get the conclusion.
\end{proof}

As a consequence, we have the following result. 

\begin{thm}\label{two poset iso} 
\begin{itemize}
\item[(1)] We have a poset isomorphism
$$\Psi:\DAD\longrightarrow\twosmc\Pi.$$
\item[(2)] We have a poset isomorphism 
$$\Phi:\NAD\to\sbrick\Pi.$$ 
\end{itemize}

\end{thm}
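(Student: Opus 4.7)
The plan is to prove part (1) first and then derive (2) from it via the commutative diagram. For (1), I would split the argument into injectivity of $\Psi$, surjectivity of $\Psi$, and compatibility of the two partial orders; well-definedness is already contained in Proposition \ref{main2}.

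Injectivity is essentially formal. Directly from the definition $\Psi(\D(w))=\{S(\alpha_{w_iw_{i+1}})[c_i]\}$, with $c_i=0$ marking green arcs and $c_i=1$ marking red ones, one reads off $H^0(\Psi(\D(w)))=\Phi(\G(w))$ and $H^{-1}(\Psi(\D(w)))=\Phi(\R(w))$. Since $\Phi$, $\mathbb{G}$ and $\mathbb{R}$ are all bijections (Theorem \ref{bij nad} and Proposition \ref{h^0}), both $\G(w)$ and $\R(w)$, and hence $w$ itself, are recoverable from $\Psi(\D(w))$, so $\Psi$ is injective.

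For surjectivity I would invoke the $\tau$-tilting finiteness of $\Pi$ that follows from Proposition \ref{arc-brick bij}(3) via Theorem \ref{finite brick}, together with the known connectedness of the mutation graph of $\twosmc\Pi$ in that setting. The initial collection $\{S_1[1],\ldots,S_n[1]\}$ equals $\Psi(\D(\id_W))$ and hence lies in the image of $\Psi$; by Proposition \ref{main2} this image is closed under mutation, so it must exhaust $\twosmc\Pi$. For the poset statement, Corollary \ref{dad partial order} expresses the order on $\DAD$ via cover relations given by left mutation, and the cover relations of $\twosmc\Pi$ under Definition \ref{poset sms} are likewise exactly left mutations; since Proposition \ref{main2} intertwines the two mutation operations, $\Psi$ is a poset isomorphism.

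Part (2) then follows from the commutative diagram, which yields $\Phi=H^0\circ\Psi\circ\mathbb{G}^{-1}$. Each of the three factors is a poset isomorphism: $\mathbb{G}$ by the very definition of the order on $\NAD$, $\Psi$ by part (1), and $H^0\colon\twosmc\Pi\to\fLsbrick\Pi=\sbrick\Pi$ by Theorem \ref{smc-fsbrick bij} combined with the identity $\sbrick\Pi=\fLsbrick\Pi$ from Proposition \ref{arc-brick bij}(3). The main obstacle in the whole argument is the surjectivity/poset step for $\Psi$, which relies on the two auxiliary inputs that the mutation graph of $\twosmc\Pi$ is connected and that its cover relations are exactly left mutations; neither is in the preliminaries above, but both are standard consequences of Koenig--Yang theory in the $\tau$-tilting finite case and should be cited rather than reproved.
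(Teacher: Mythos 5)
Your proposal is correct and follows essentially the same route as the paper: injectivity from the arc--brick bijection, order-compatibility from Proposition \ref{main2} together with the fact that cover relations of $\smc$ are irreducible left mutations (the paper cites [AI, Thm.~2.35] and [KY, Thm.~7.12] for this), and part (2) via the commutative square with $H^0$ and $\mathbb{G}$. The only difference is that where you cite connectedness of the $2$-term exchange graph for surjectivity, the paper reproduces the standard argument of [AIR, Cor.~2.38] (a decreasing chain of left mutations from the maximal $2$-term SMC staying above a given $\xX$ must terminate at $\xX$ by finiteness of the component); this is a presentational rather than a mathematical difference.
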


\begin{proof}
(1) First we will show that the map is injective. 
Let $\D,\D'\in \DAD$ and 
$$\D = \{\alpha_{1}(c_1),\alpha_{2}(c_2),\cdots,\alpha_{n}(c_{n})\}, \ \ \ \D' = \{\alpha_{1}'(c_1'),\alpha_{2}'(c_2'),\cdots,\alpha_{n}'(c_{n}')\}.$$

Assume that $\Psi(\D)=\Psi(\D')$. 
It is equivalent to saying that 
$$\{S(\alpha_{i})[c_i]\}=\{S(\alpha_{i}')[c_i']\}.\  \ \ ({1\leq i\leq n})$$
Then, Proposition \ref{arc-brick bij} 
implies $\D=\D'$. 
Thus the map is injective. 

Next we will show that the map is an order embedding. 
Let $\D,\D'\in \DAD$ and 
assume that $\D'\gtrdot\D$, or equivalently, 
$\D$ is obtained by left mutation from $\D'$. 
By Proposition \ref{main2}, it is also equivalent to saying that $\Psi(\D)$ is obtained  by left mutation from $\Psi(\D')$. 
Equivalently, we have $\Psi(\D')\gtrdot\Psi(\D)$ by  \cite[Theorem 2.35]{AI} and \cite[Theorem 7.12]{KY}. 
Therefore, the map $W\to\twosmc\Pi$ is an order embedding. 

Finally we will show that the map is surjective, which follows from the same argument as \cite[Corollary 2.38]{AIR}. 
From the above argument, $\{\Psi(\D)\ |\ \D\in\DAD\}$ consists of the connected component in the Hasse quiver of $\twosmc\Pi$, which is isomorphic to $W$. 
It is enough to show that any element $\xX\in\twosmc\Pi$ belongs to this component. 
Then, since $\xX\in\twosmc\Pi$, we have 
$\Psi(\D(\id))\geq\xX$. 
Then \cite[Proposition 2.35]{AI} and \cite[Theorem 7.12]{KY} implies that there exists a sequence $\Psi(\D(\id))>\xX_1 >\xX_2 >\cdots$ of left mutation of $\twosmc\Pi$ such that $\xX_i\geq \xX$ for any $i$. 
Since the component $\{\Psi(\D)\ |\ \D\in\DAD\}$ is finite and $\Psi(\D(\id))\in\{\Psi(\D)\ |\ \D\in\DAD\}$, this sequence must be finite. Thus we have $\xX=\xX_j$ for some $j$ and hence $\xX$ belongs to $\{\Psi(\D)\ |\ \D\in\DAD\}$.

(2) We have the following commutative diagram and all maps are bijections by Proposition \ref{h^0}, Theorems \ref{smc-fsbrick bij} and \ref{bij nad}.
\begin{align*}
\begin{xy}
( 0,  8) *+{\DAD}   ="01",
(45,  8) *+{\twosmc\Pi }   ="11",
( 0, -8) *+{\NAD}    ="00",
(45, -8) *+{\sbrick \Pi,}     ="10",
\ar^{\mathbb{G}}   "01";"00"
\ar^{\Psi} "01";"11"
\ar^{\Phi}  "00";"10"
\ar^{H^0}   "11";"10"
\end{xy}
\end{align*}

Moreover, since 
the maps $\mathbb{G},\Psi,H^0$ are poset isomorphisms, 
we obtain the conclusion.
\end{proof}


The next corollary allows us to compare the orders  of arc diagrams in a simple way.

\begin{cor}\label{poset def}
$\D(w)\leq \D(v)$
(or equivalently, $\G(w)\leq \G(v)$) if and only if there exists no graph map 
from any $M\in\Phi(\G(w))$ to any $N\in\Phi(\R(v))$.

\end{cor}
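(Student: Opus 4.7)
The plan is to translate the inequality $\D(w)\leq\D(v)$ through the poset isomorphism $\Psi\colon\DAD\to\twosmc\Pi$ of Theorem \ref{two poset iso}, unpack the partial order on $\twosmc\Pi$ from Definition \ref{poset sms}, and then apply the graph-map description of $\Hom$-spaces in Theorem \ref{CWthm}. The equivalence $\D(w)\leq\D(v)\iff\G(w)\leq\G(v)$ is automatic from the definition of the partial order on $\NAD$ via $\mathbb{G}$, so one only needs to handle the first condition.

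First I would write, by definition of $\Psi$,
\[
\Psi(\D(w))=\Phi(\G(w))\sqcup\Phi(\R(w))[1],\qquad \Psi(\D(v))=\Phi(\G(v))\sqcup\Phi(\R(v))[1].
\]
By Theorem \ref{two poset iso}(1) the inequality $\D(w)\leq\D(v)$ is equivalent to $\Psi(\D(w))\leq\Psi(\D(v))$, which by Definition \ref{poset sms} unfolds as
\[
\Hom_{\Db(\mod\Pi)}(X,Y[m])=0\quad\text{for every }X\in\Psi(\D(w)),\ Y\in\Psi(\D(v)),\ m<0.
\]
Next I would check, case by case on the pair $(X,Y)$, that all instances are automatic except one. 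Since each arc module lies in $\mod\Pi$, a pair with $X=S(\alpha)$, $Y=S(\gamma)$ (both concentrated in degree $0$) gives $\Ext^m_\Pi(S(\alpha),S(\gamma))=0$ for $m<0$; similarly the pairs $(S(\beta)[1],S(\gamma))$ and $(S(\beta)[1],S(\delta)[1])$ reduce to $\Ext^{m-1}$ and $\Ext^m$, both vanishing for $m<0$. The only surviving case is $X=S(\alpha)$ with $\alpha\in\G(w)$ and $Y=S(\delta)[1]$ with $\delta\in\R(v)$, where for $m=-1$ we obtain the nontrivial condition
\[
\Hom_\Pi(S(\alpha),S(\delta))=0\qquad\text{for all }\alpha\in\G(w),\ \delta\in\R(v),
\]
while $m<-1$ again gives $\Ext^{m+1}=0$ automatically.

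Finally I would apply Theorem \ref{CWthm}, which says that a basis of $\Hom_\Pi(S(\alpha),S(\delta))$ is indexed by graph maps from $\FF(\alpha)$ to $\sS(\delta)$. Hence the displayed vanishing, quantified over all $\alpha\in\G(w)$ and $\delta\in\R(v)$, is precisely the nonexistence of any graph map from $\FF(\G(w))=\bigcup_{\alpha\in\G(w)}\FF(\alpha)$ to $\sS(\R(v))=\bigcup_{\delta\in\R(v)}\sS(\delta)$, yielding the claimed equivalence.

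There is no substantial obstacle; the only point requiring care is the case analysis reducing the derived $\Hom$-vanishing to a single $\Hom$ in $\mod\Pi$, and in particular the observation that only a $\Hom$ between a green arc of $w$ and a red arc of $v$ can be nontrivial, all other pairs being forced to vanish by the $2$-term structure.
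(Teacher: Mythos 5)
Your proposal is correct and follows essentially the same route as the paper's proof: transport the inequality through the poset isomorphism $\Psi$ of Theorem \ref{two poset iso}, use the $2$-term structure to reduce the SMC order condition to the single vanishing $\Hom_{\Pi}(\Phi(\G(w)),\Phi(\R(v)))=0$, and conclude with Theorem \ref{CWthm}. Your explicit case analysis over the pairs $(X,Y)$ and shifts $m<0$ just spells out what the paper compresses into the remark that the collections are $2$-term.
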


\begin{proof}
By Theorem \ref{two poset iso}, 
$\D(w)\leq\D(v)$ if and only if $\Psi(\D(w))\leq\Psi(\D(v))$.

Since any object of $\Psi(\D(w))$ and $\Psi(\D(v))$ is 2-term, 
we have 
$\Psi(\D(w))\leq\Psi(\D(v))$ if and only if  
$$\Hom_{\Db(\mod\Pi)}(X,Y[-1])=0$$
for any $X\in\Psi(\D(w))$ and  $Y\in\Psi(\D(v))$.
Because $$\Psi(\D(w))=\Phi(\G(w))\sqcup\Phi(\R(w))[1],$$ it is also equivalent to saying that 
$\Hom_{\Pi}(M,N)=0$ for any 
$M\in\Phi(\G(w))$ and $N\in\Phi(\R(v))$.
Then the conclusion follows from Theorem \ref{CWthm}. 
\end{proof}


At the end of this section, we will discuss a consequence of the above results.

Let $A$ be a finite dimensional algebra.  
Even though we know that there exists a bijection between  $\fLsbrick A$ and  $\twosmc A$, it is not known that a calculatable map from $\fLsbrick A$ to  $\twosmc A$ in general. 

Theoretically, for a given $\sS\in\fLsbrick A$, we obtain the corresponding SMC $\xX_\sS$ by the following steps (see \cite[Figure 1]{A1} for more details). 

\begin{itemize} 
\item[(i)] 
From $\sS$, we calculate 
the smallest torsion class $\T(\sS)$ containing $\sS$. 
\item[(ii)] 
From the torsion class $\T(\sS)$, we calculate a Ext-projective of $\T(\sS)$, which we denote by $T$. 
It turns out that $T$ is a support $\tau$-tilting module and it is identified with a support $\tau$-tilting pair $(T,P)$ for some projective module $P$ \cite[Theorem 2.7]{AIR}. 

\item[(iii)]  
From $(T,P)$, we calculate $D(T,P)^\dagger:=(\tau T\oplus \nu P,\nu T_{pr})$, where $\nu$ is the Nakayama functor and $T_{pr}$ is a maximal projective direct summand of $T$. It turns out to be a support $\tau^{-}$-tilting pair \cite[Theorem 2.14]{AIR}. 

\item[(iv)] 
From $U:=\tau T\oplus \nu P$, 
we calculate the set $\sS'=\{X_1,\ldots,X_i\}$ of isoclasses of indecomposable direct summands of
$\Soc_{\End_A(U)}(U)$.

\end{itemize}
Then, we conclude 
$\sS\sqcup\sS'[1]$ is a 2-term SMC, and 
this correspondence gives a bijection between 
$\fLsbrick A$ and  $\twosmc A$ \cite{A1}.

However, it is very hard to calculate all modules of $\T(\sS)$ and the Ext-projective of $\T(\sS)$. 
From this viewpoint, we pose the following question. 
\begin{question}
For a given left finite semibricks, how can we calculate an explicit description of the corresponding SMC ?
\end{question}

In our situation, the map $\fLsbrick\Pi=\sbrick\Pi\to\twosmc\Pi$ is given by simple combinatorics of arc diagrams, that is, all we have to do is to calculate $\D(w)$ from $\G(w)$, which is done in \cite{R3} as pointed out in Remark \ref{G,R,D}.


\section{Quotient algebras of the preprojective algebra}
In this section, 
we discuss semibricks for several quotient algebras of preprojective algebras. 
As an application, we study semibricks of some important  classes of algebras and recover some known results. 

As before, fix an integer $n\geq1$ and let $\Pi=\Pi_n$ be the preprojective algebra of type $A_{n}$ and 
$W=W_{n}$ the symmetric group of degree $n+1$.

Let $I$ be a two-sided ideal of $\Pi$. 
Then we define 
$$\NAD_I:=\{\G\in\NAD\ |\ S(\alpha)\in\mod(\Pi/I),\ \forall\alpha\in\G\}.$$

Then we naturally regard $\NAD_I$ as an induced subposet of $\NAD$, that is, for $\G,\G'\in\NAD_I$, we have 
$\G\leq\G'$ on $\NAD_I$ if and only if $\G\leq\G'$ on $\NAD$.

As before, for $\G\in\NAD_I$, we can define 
\begin{eqnarray*}
\Phi(\G)=\{S(\alpha)\ |\ \alpha\in\G \}\subset\mod (\Pi/I).
\end{eqnarray*}


Moreover, for $\mathcal{C}\subset\mod(\Pi/I)$, we denote  
$\T_{\Pi}(\mathcal{C})$  (resp. $\T_{\Pi/I}(\mathcal{C})$) 
by the smallest torsion class containing $\mathcal{C}$ in $\mod\Pi$ (resp. in $\mod(\Pi/I)$). 
Note that $\T_{\Pi}(\mathcal{C})=\Filt_\Pi(\Fac\mathcal{C})$ (see \cite[Lemma 3.1]{MS}). We let  

$$\sbrick(\Pi)\cap\mod(\Pi/I):=\{\sS\in\sbrick\Pi\ |\ S\in\mod(\Pi/I),\ \forall S\in\sS\}.$$

\begin{lemm}\label{sbrick iso}
We have a poset isomorphism 
$$\sbrick(\Pi)\cap\mod(\Pi/I)\to\sbrick(\Pi/I).$$
\end{lemm}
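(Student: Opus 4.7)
The plan is to view this lemma as induced by the inclusion $\mod(\Pi/I)\hookrightarrow\mod\Pi$, which preserves morphism spaces, and to compare the torsion classes generated on the two sides.

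First I would confirm the underlying bijection of sets. For $S,T\in\mod(\Pi/I)$ regarded as $\Pi$-modules, $\End_{\Pi/I}(S)=\End_\Pi(S)$ and $\Hom_{\Pi/I}(S,T)=\Hom_\Pi(S,T)$, since a $K$-linear map is $\Pi$-linear precisely when it is $\Pi/I$-linear. Hence $\sS\subset\mod(\Pi/I)$ is a semibrick in $\mod(\Pi/I)$ if and only if it is a semibrick in $\mod\Pi$ all of whose components lie in $\mod(\Pi/I)$, producing the desired bijection of sets $\sbrick(\Pi)\cap\mod(\Pi/I)\to\sbrick(\Pi/I)$. Since $\sbrick\Pi$ is finite by Proposition \ref{arc-brick bij}~(3), so is $\sbrick(\Pi/I)$, and Theorem \ref{finite brick} gives $\sbrick(\Pi/I)=\fLsbrick(\Pi/I)$, so the partial order of Definition-Theorem \ref{poset} is available on both sides.

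The main technical step is the identity
\[
\T_{\Pi/I}(\sS)=\T_\Pi(\sS)\cap\mod(\Pi/I)\quad\text{for any }\sS\subset\mod(\Pi/I).
\]
The inclusion $\subset$ is immediate, because $\T_\Pi(\sS)\cap\mod(\Pi/I)$ contains $\sS$ and is closed under extensions and quotients taken inside $\mod(\Pi/I)$. For $\supset$ I would use the filtration description $\T_\Pi(\sS)=\Filt(\Fac(\add\sS))$: any $M\in\T_\Pi(\sS)\cap\mod(\Pi/I)$ admits a chain $0=M_0\subset M_1\subset\cdots\subset M_k=M$ whose subquotients $M_i/M_{i-1}$ are quotients of finite sums of modules in $\sS$. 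Every $M_i$ is a submodule of $M$ and every $M_i/M_{i-1}$ is a quotient of an object of $\add\sS\subset\mod(\Pi/I)$, so all terms of the filtration already lie in $\mod(\Pi/I)$; hence $M\in\T_{\Pi/I}(\sS)$.

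From this identity the order-compatibility is formal: $\T_{\Pi/I}(\sS)\subset\T_{\Pi/I}(\sS')$ iff $\sS\subset\T_\Pi(\sS')\cap\mod(\Pi/I)$ iff $\sS\subset\T_\Pi(\sS')$ iff $\T_\Pi(\sS)\subset\T_\Pi(\sS')$, which closes the argument. The only real subtlety is that $\mod(\Pi/I)$ is \emph{not} in general closed under extensions inside $\mod\Pi$, so the $\supset$ direction of the identity cannot be proved by a naive extension-closure argument and must instead be obtained through the explicit filtration description of $\T_\Pi(\sS)$.
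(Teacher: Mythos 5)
Your argument is correct and follows essentially the same route as the paper: both rest on the fullness of $\mod(\Pi/I)\hookrightarrow\mod\Pi$ for the underlying bijection and on the identity $\T_{\Pi/I}(\sS)=\T_\Pi(\sS)\cap\mod(\Pi/I)$ for the order-compatibility. You additionally justify that identity via the filtration description $\T_\Pi(\sS)=\Filt(\Fac(\add\sS))$, a detail the paper leaves implicit, and your observation about extension-closure is exactly the subtlety that makes this justification worthwhile.
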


\begin{proof}
Because $\mod(\Pi/I)$ is a full subcategory of $\mod\Pi$, 
we can identify $\brick(\Pi/I)$ with 
$\brick(\Pi)\cap\mod(\Pi/I)=\{S\in\brick\Pi\ |\ SI=0\}$ and 
$\sbrick(\Pi/I)$ with 
$\sbrick(\Pi)\cap\mod(\Pi/I)$.

Take $\sS,\sS'\in\sbrick(\Pi)\cap\mod(\Pi/I)$ and assume that 
$\sS\leq\sS'$, that is, $\T_{\Pi}(\sS)\subset\T_{\Pi}(\sS').$ 
Since $\T_{\Pi}(\mathcal{C})=\Filt_\Pi(\Fac\mathcal{C})$, we have 
$\T_\Pi(\sS)\cap\mod(\Pi/I)=\T_{\Pi/I}(\sS)$. Thus, we have 
$\T_{\Pi/I}(\sS)\subset\T_{\Pi/I}(\sS').$ 

Conversely, 
take $\sS,\sS'\in\sbrick(\Pi/I)$ and assume that 
$\sS\leq\sS'$, that is, $\T_{\Pi/I}(\sS)\subset\T_{\Pi/I}(\sS').$ 
Then since $\T_\Pi(\T_{\Pi/I}(\sS))=\T_{\Pi}(\sS)$, we have 
$\T_{\Pi}(\sS)\subset\T_{\Pi}(\sS').$ 
Thus the map is an isomorphism.
\end{proof}

\begin{thm}\label{main3}
Let $I$ be a two-sided ideal of $\Pi$. 
Then we have a poset isomorphism
$$\Phi: \NAD_I\longrightarrow\sbrick(\Pi/I).$$
\end{thm}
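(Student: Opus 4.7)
The plan is to obtain this as a direct consequence of the poset isomorphism $\Phi : \NAD \to \sbrick\Pi$ established in Theorem \ref{two poset iso}(2), combined with the identification $\sbrick(\Pi) \cap \mod(\Pi/I) \cong \sbrick(\Pi/I)$ of Lemma \ref{sbrick iso}. Concretely, I will show that $\Phi$ restricts to a poset isomorphism $\NAD_I \to \sbrick(\Pi) \cap \mod(\Pi/I)$, and then compose with the isomorphism of Lemma \ref{sbrick iso}.

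First I would verify that the restriction is well-defined: for $\G \in \NAD_I$, every arc module $S(\alpha)$ with $\alpha \in \G$ lies in $\mod(\Pi/I)$ by definition of $\NAD_I$, so $\Phi(\G) \subset \mod(\Pi/I)$. Injectivity of the restriction follows from injectivity of $\Phi$ on all of $\NAD$. For surjectivity, take any $\sS \in \sbrick(\Pi) \cap \mod(\Pi/I)$; since $\Phi : \NAD \to \sbrick\Pi$ is a bijection, there is a unique $\G \in \NAD$ with $\Phi(\G) = \sS$, and then each $S(\alpha)$ ($\alpha \in \G$) belongs to $\sS \subset \mod(\Pi/I)$, so $\G \in \NAD_I$ by the defining condition of $\NAD_I$.

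It remains to check that $\Phi|_{\NAD_I}$ is a poset isomorphism. The partial order on $\NAD_I$ is by construction the one induced from $\NAD$, and the partial order on $\sbrick(\Pi) \cap \mod(\Pi/I)$ is likewise induced from $\sbrick\Pi$; hence the restriction inherits the order-isomorphism property from Theorem \ref{two poset iso}(2). Composing with the poset isomorphism $\sbrick(\Pi) \cap \mod(\Pi/I) \to \sbrick(\Pi/I)$ of Lemma \ref{sbrick iso} yields the desired poset isomorphism $\Phi : \NAD_I \to \sbrick(\Pi/I)$.

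Since all the real work has already been done in Theorem \ref{two poset iso} and Lemma \ref{sbrick iso}, there is no genuine obstacle; the only point requiring care is ensuring that the two compatibilities of partial orders — one defining $\NAD_I$ as a subposet of $\NAD$, the other matching torsion classes of $\Pi$ and $\Pi/I$ — line up, and this is precisely the content of Lemma \ref{sbrick iso}.
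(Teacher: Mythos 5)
Your proposal is correct and follows exactly the paper's argument: restrict the poset isomorphism $\Phi:\NAD\to\sbrick\Pi$ of Theorem \ref{two poset iso}(2) to $\NAD_I$, identify the image with $\sbrick(\Pi)\cap\mod(\Pi/I)$, and compose with the poset isomorphism of Lemma \ref{sbrick iso}. The paper states this more tersely, but the content is the same.
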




\begin{proof} 
By Theorem \ref{two poset iso},
we have a poset isomorphism 
$$\NAD_I\longrightarrow\sbrick(\Pi)\cap\mod(\Pi/I).$$
Moreover, Lemma \ref{sbrick iso} gives a poset isomorphism 
$\sbrick(\Pi)\cap\mod(\Pi/I)\to\sbrick(\Pi/I)$. 
\end{proof}

As a first application of Theorem \ref{main3}, 
we consider the case that semibricks of the algebras do not change. 

\begin{cor}\label{cycle}
Let $I_{\textnormal{cyc}}$ be the ideal of $\Pi$ generated by all 2-cycles. 
Then we have a poset isomorphism
$$\Phi:\NAD\longrightarrow\sbrick (\Pi/I_{\textnormal{cyc}}).$$
\end{cor}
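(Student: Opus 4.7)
The plan is to deduce Corollary \ref{cycle} directly from Theorem \ref{main3} by verifying that $\NAD_{I_{\textnormal{cyc}}}=\NAD$, i.e.\ that every arc module $S(\alpha)$ factors through the quotient $\Pi/I_{\textnormal{cyc}}$. Since Theorem \ref{main3} already provides the poset isomorphism $\Phi:\NAD_{I_{\textnormal{cyc}}}\to\sbrick(\Pi/I_{\textnormal{cyc}})$, the whole corollary reduces to checking that $S(\alpha)\in\mod(\Pi/I_{\textnormal{cyc}})$ for every arc $\alpha$.

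To verify this, I will examine how the generators of $I_{\textnormal{cyc}}$ act on an arc module. The ideal $I_{\textnormal{cyc}}$ is generated by the 2-cycles $a_i a_i^{-}$ (at $v_{i+1}$) and $a_i^{-}a_i$ (at $v_i$) for $1\leq i\leq n-1$. By the construction of $S(\alpha)$, each arrow $a_i$ or $a_i^{-}$ of $Q$ acts as the identity on $S(\alpha)$ if it appears in the arrow sequence $a_i^{\epsilon_i}a_{i+1}^{\epsilon_{i+1}}\cdots a_j^{\epsilon_j}$ of $\alpha$, and acts as $0$ otherwise. The key observation is that, between consecutive lines $v_i$ and $v_{i+1}$, the arc $\alpha$ has at most one central segment ${}_{v_i}|\alpha|_{v_{i+1}}$, and this segment determines a single arrow (either $a_i$ if the segment lies below the point $i+1$, or $a_i^{-}$ if it lies above); consequently, $a_i$ and $a_i^{-}$ cannot both act as identity on $S(\alpha)$.

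Therefore, for every $i$, the composition $a_i a_i^{-}$ (resp.\ $a_i^{-}a_i$) acts as zero on $S(\alpha)$, since at least one of its two factors is zero. This shows that $I_{\textnormal{cyc}}$ annihilates $S(\alpha)$, so that $S(\alpha)\in\mod(\Pi/I_{\textnormal{cyc}})$, and hence $\NAD_{I_{\textnormal{cyc}}}=\NAD$. Combining this equality with the poset isomorphism of Theorem \ref{main3} yields the desired conclusion
\[
\Phi:\NAD\longrightarrow\sbrick(\Pi/I_{\textnormal{cyc}}).
\]
There is no substantive obstacle here; the only point requiring care is the combinatorial observation on the arrow sequence (at most one arrow per strip $[v_i,v_{i+1}]$), which is immediate from the definition of $S(\alpha)$ via the central segments of $\alpha$.
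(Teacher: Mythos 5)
Your proposal is correct and follows exactly the paper's route: the paper's proof simply asserts the equality $\NAD=\NAD_{I_{\textnormal{cyc}}}$ and then invokes Theorem \ref{main3}. The only difference is that you spell out why every arc module is annihilated by the $2$-cycles (at most one of $a_i$, $a_i^{-}$ acts as the identity on $S(\alpha)$ for each $i$), a verification the paper leaves implicit; this is a correct and welcome elaboration, not a different argument.
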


\begin{proof}
Since we have $\NAD=\NAD
_{I_{\textnormal{cyc}}}$, 
we get $\NAD\cong\NAD_{I_{\textnormal{cyc}}}\cong\sbrick (\Pi/I_{\textnormal{cyc}})$ by Theorem \ref{main3}.
\end{proof}

\begin{remk}
Corollary \ref{cycle}  was also shown in \cite[Lemma 4.2.8]{BCZ} and \cite[Proposition 6.7]{DIRRT}.
\end{remk}

As a second application of Theorem \ref{main3},
we study semibricks of a path algebra in terms of a special class of arc diagrams.

\begin{defi}\cite{R3}\label{RNAD}
A \emph{right arc} is an arc that does not pass to the left (=above) of any point, and we call a noncrossing arc diagram having only right arcs a \emph{right noncrossing arc diagram}. We denote the set of right  noncrossing arc diagrams by 
$\RNAD=\RNAD_n$ and regard it as a subposet of $\NAD$.
\end{defi}

\begin{remk}
\begin{itemize} 
\item[(i)]
The right noncrossing arc diagrams are nothing but \emph{noncrossing partitions}, and its enumeration is well-known : The number of right noncrossing arc  diagrams on $n$ points is the Catalan number $\frac{1}{n+1}\left(\begin{smallmatrix}
2n\\
n
\end{smallmatrix}
\right)$.
\item[(ii)] 
The name of \emph{right arc} follows from \cite{R3}, which makes sense if we arrange an arc diagram vertically.   
\end{itemize} 
\end{remk}

As a corollary of Theorem \ref{main3}, we have the following result. 

\begin{cor}\label{linear}
Let $\vec{Q}$ be a linear quiver of type $A_n$. 
Then we have a poset isomorphism
$$\Phi:\RNAD\longrightarrow\sbrick K\vec{Q}.$$
\end{cor}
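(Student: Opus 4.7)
The strategy is to express $K\vec{Q}$ as a quotient $\Pi/I$ and then apply Theorem \ref{main3}, reducing the statement to the combinatorial identity $\NAD_I = \RNAD$.

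Fixing the linear orientation $v_1 \to v_2 \to \cdots \to v_n$ on $\vec{Q}$, the presentation of $\Pi$ makes it clear that $K\vec{Q} \cong \Pi/I$, where $I$ is the two-sided ideal generated by the reverse arrows $a_1^-, \ldots, a_{n-1}^-$: once these arrows are killed, the mesh relations become trivial, leaving precisely the path algebra of $\vec{Q}$. Theorem \ref{main3} then supplies a poset isomorphism $\Phi \colon \NAD_I \to \sbrick K\vec{Q}$, so the problem reduces to showing $\NAD_I = \RNAD$.

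For this, note that by construction of $S(\alpha)$, the identity map is assigned precisely to the arrows $a_k^{\epsilon_k}$ occurring in the arrow sequence of $\alpha$, while every other arrow of $Q$ acts as zero. Hence $S(\alpha) \in \mod(\Pi/I)$ if and only if no reverse arrow $a_k^-$ appears in that arrow sequence. Unwinding the recipe that attaches an arrow sequence to an arc (a central segment lying below a point contributes $a_k$, while one lying above contributes $a_k^-$), this condition is exactly the defining property of a right arc. Therefore $\NAD_I = \RNAD$ as induced subposets of $\NAD$, and the desired poset isomorphism follows.

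There is no real obstacle here: once Theorem \ref{main3} is available, the corollary becomes a direct combinatorial unwinding. The only small subtlety is matching the geometric definition of a right arc with the algebraic condition of having no reverse arrow in the arrow sequence, and this is immediate from the construction of the arc module, requiring no further case analysis.
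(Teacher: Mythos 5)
Your proof is correct and follows exactly the paper's route: the paper also sets $I=\langle a_1^-,\ldots,a_{n-1}^-\rangle$, notes $\Pi/I\cong K\vec{Q}$ and $\NAD_I=\RNAD$, and invokes Theorem \ref{main3}. You simply spell out the verification of $\NAD_I=\RNAD$ (via the arrow sequence of an arc) that the paper leaves as ``easy to check.''
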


\begin{proof}
For $I:=\langle a_1^-,a_2^-,\ldots,a_{n-1}^-\rangle$, 
we have $\Pi/I\cong K\vec{Q}$. 
On the other hand,  it is easy to check 
$$\NAD_I=\RNAD.$$
Thus Theorem \ref{main3} implies the assertion.
\end{proof}

\begin{remk}
Corollary \ref{linear} can be easily generalized to an arbitrary quiver of type $A$ by the notion of $c$-sortable arcs ($c$-sortable permutations) (see \cite{BaR,R1,R2} for definitions and further background). 
More precisely, let $Q$ be a quiver of type $A$ and $c$-$\NAD$ the set of noncrossing arc diagrams consisting of $c$-sortable arcs corresponding $Q$. 
Then we have a poset isomorphism 
$$\Phi:c\textnormal{-}\NAD\longrightarrow\sbrick KQ.$$
This result together with Theorem \ref{poset} recovers one of the main results by Ingalls-Thomas \cite{IT}, which relate $c$-sortable elements and torsion classes.
\end{remk}

To give a last application, we introduce the notion of alternating arc diagrams.

\begin{defi}\cite{BaR}\label{ANAD}
A \emph{right-even alternating arc} is an arc that passes to the right (=below) of even points and to the left (=above) of odd points. A \emph{left-even alternating arc} is an arc that passes to the left of even points and to the right of odd points. 

An \emph{alternating arc} is an arc that is either right-even alternating or left-even alternating or both. We call a noncrossing arc diagram consisting of alternating arcs an \emph{alternating noncrossing arc diagram}. 
We denote the set of alternating noncrossing arc diagrams by $\ANAD=\ANAD_n$. 
\end{defi}

Let $\rad(\Pi)$ be a radical of $\Pi$ and 
consider the quotient algebra $\Pi/\rad^2(\Pi)$, where $\rad^2(\Pi)$ is the radical square. 
It is equivalent to saying that $\rad^2(\Pi)$ 
is the ideal of $\Pi$ generated by all paths of length two. 

Then we have the following result. 

\begin{cor}\label{cor2}
We have a poset isomorphism
$$\ANAD\longrightarrow\sbrick (\Pi/\rad^2(\Pi)).$$
\end{cor}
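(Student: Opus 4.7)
The plan is to apply Theorem \ref{main3} with the ideal $I = \rad^2(\Pi)$ and reduce the corollary to the set-theoretic equality $\NAD_{\rad^2(\Pi)} = \ANAD$. Granted that equality, Theorem \ref{main3} immediately yields the required poset isomorphism $\Phi$. So the work is to prove, arc by arc, that $S(\alpha) \in \mod(\Pi/\rad^2(\Pi))$ if and only if $\alpha$ is alternating in the sense of Definition \ref{ANAD}.

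First I would observe that $\rad^2(\Pi)$ is generated by all paths of length two in $Q$, so $S(\alpha) \in \mod(\Pi/\rad^2(\Pi))$ exactly when every length-two path of $Q$ acts as zero on $S(\alpha)$. Since the structure maps of $S(\alpha)$ are identities along the arrows of $Q_\alpha$ and zero elsewhere, such a path acts nontrivially only when both of its arrows lie in $Q_\alpha$ and their product is a genuine path of $Q$. This localizes the question to pairs of adjacent arrows in the arrow sequence of $\alpha$.

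The core computation is then a short case analysis on consecutive letters $a_i^{\epsilon_i} a_{i+1}^{\epsilon_{i+1}}$ of that sequence. Among the four sign combinations, only $(\epsilon_i,\epsilon_{i+1}) = (+,+)$ and $(-,-)$ produce composable length-two paths in $Q$, namely $a_i a_{i+1}\colon v_i \to v_{i+1} \to v_{i+2}$ and $a_{i+1}^{-} a_i^{-}\colon v_{i+2} \to v_{i+1} \to v_i$ respectively; the mixed-sign cases fail to match source and target at $v_{i+1}$. The only remaining length-two paths of $Q$ are the 2-cycles $a_i a_i^{-}$ and $a_i^{-} a_i$, but at each position exactly one of $a_i$ and $a_i^{-}$ belongs to $Q_\alpha$, so these 2-cycles always act as zero on $S(\alpha)$. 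Since $\epsilon_i$ records whether $\alpha$ passes below or above the intermediate point $i+1$, the condition that no pair $(\epsilon_i,\epsilon_{i+1})$ is constant is exactly the condition that $\alpha$ lies on opposite sides of consecutive intermediate points, and this is precisely Reading's definition of an alternating arc, with the two possible starting sides giving, respectively, the right-even and left-even cases.

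The main obstacle I expect is bookkeeping rather than mathematics. One must align the ``above/below'' language used throughout this paper with Reading's ``left/right'' phrasing, which after the $90^\circ$ rotation built into Definition \ref{def dad} is just a change of viewpoint, and pin down which of $\epsilon_i = +$, $\epsilon_i = -$ corresponds to ``below'' and which to ``above'' so that the identification of alternation patterns is unambiguous. Once these conventions are fixed, the case analysis is immediate and Theorem \ref{main3} completes the proof.
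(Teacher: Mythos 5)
Your proposal is correct and follows exactly the paper's route: invoke Theorem \ref{main3} with $I=\rad^2(\Pi)$ and verify $\NAD_{\rad^2(\Pi)}=\ANAD$, which the paper dismisses as ``easy to check'' and which your sign analysis of consecutive letters $a_k^{\epsilon_k}a_{k+1}^{\epsilon_{k+1}}$ carries out correctly (equal signs give a composable length-two path acting by the identity, mixed signs and the 2-cycles act by zero). No gaps.
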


\begin{proof}
It is easy to check 
$$\NAD_{\rad^2(\Pi)}=\ANAD.$$
Thus Theorem \ref{main3} implies the assertion.
\end{proof}

\begin{remk}
Let $B$ be a Brauer line algebra with the multiplicity 1 (see, for example, \cite[section 4]{RZ} and \cite[section 2]{AMN} for the definition).   
Then we have a poset isomorphism
$$\sbrick B\cong  \sbrick(\Pi/\rad^2(\Pi))$$
(for example, this follows from  \cite[section1.4]{A1} or \cite[Proposition 4.3]{Ad}). 
Thus, Corollary \ref{cor2} gives a classification of semibricks of a Brauer line algebra in terms of alternating arc diagrams. 
Moreover the result of \cite{BaR} gives a bijection a between $\ANAD$ and 
the quotient lattice $W/\theta_{biC}$, where $\theta_{biC}$ denotes by the biCambrian congruence. 
Since there exists a bijection between the set of torsion classes, the set of 2-term silting complexes and the set of semibricks \cite{AIR,A1}, 
the above result also implies results of \cite[Theorem 
7.10]{DIRRT}, \cite{Ao} and \cite[Theorem 5.2]{AMN}. 
\end{remk}


\textbf{Acknowledgements.}
The author would like to thank Takahide Adachi, Aaron Chan, Osamu Iyama, Toshiya Yurikusa for helpful comments and variable discussion on the first version of this paper. 
He would like to thank the referee for carefully reading our manuscript and detailed comments that  helped his to improve the manuscript.


\end{document}